\numberwithin{equation}{section}
\theoremstyle{plain}
\newtheorem{definition}{Definition}[section]
\newtheorem{lemma}{Lemma}[section]
\newtheorem{theorem}{Theorem}[section]
\newcommand{\beq}{\begin{equation}}
\newcommand{\eeq}{\end{equation}}
\newcommand{\beqs}{\begin{eqnarray*}}
\newcommand{\eeqs}{\end{eqnarray*}}
\newcommand{\beqn}{\begin{eqnarray}}
\newcommand{\eeqn}{\end{eqnarray}}
\newcommand{\beqa}{\begin{array}}
\newcommand{\eeqa}{\end{array}}
\def\S{\mathbb S}
\def\R{\mathbb R}
\def\tR{\mathcal R}
\def\A{\mathscr A}
\def\tC{\mathcal C}
\def\M{\mathcal M}
\def\N{\mathcal N}
\def\J{\mathcal J}
\def\I{\mathcal I}
\def\O{\mathcal O}
\def\p{\partial}
\def\vol{\text{Vol}}
\def\const{\text{const.}}
\def\Jac{\text{Jac}}
\def\eps{\epsilon}
\def\h{h_*}
\def\b{b^*}
\def\u{u^*}
\def\r{r^*}
\begin{document}


\title{Flow by Gauss curvature to\\ the Aleksandrov and dual Minkowski problems}

\author{Qi-Rui Li}
\address{Qi-Rui Li: 
Centre for Mathematics and Its Applications, the Australian National University, Canberra, ACT 2601, Australia.}
\email{qi-rui.li@anu.edu.au}

\author{Weimin Sheng}
\address{Weimin Sheng: School of Mathematical Sciences, Zhejiang University, Hangzhou 310027, China; and
Centre for Mathematics and Its Applications, the Australian National University, Canberra, ACT 2601, Australia.}
\email{weimins@zju.edu.cn}

\author{Xu-Jia Wang}
\address{Xu-Jia Wang: 
Centre for Mathematics and Its Applications, the Australian National University, Canberra, ACT 2601, Australia.}
\email{xu-jia.wang@anu.edu.au}

\keywords{Monge-Amp\`ere equation, Gauss curvature flow, Asymptotic behaviour}

\subjclass[2010]{35K96, 53C44}

\thanks{The first and third authors were supported by ARC FL130100118.
The second author was supported by NSFC: 11571304, 11131007.}

\begin{abstract}
In this paper we study a contracting flow of closed, 
convex hypersurfaces in the Euclidean space $\R^{n+1}$
with speed $f r^{\alpha} K$, where $K$ is the Gauss curvature, 
$r$ is the distance from the hypersurface to the origin,
and $f$ is a positive and smooth function.
If $\alpha \ge n+1$, we prove that the flow exists for all time
and converges smoothly after normalisation to a soliton,
which is a sphere centred at the origin if $f \equiv 1$.
Our argument provides a parabolic proof in the smooth category for the classical Aleksandrov problem, 
and resolves the dual q-Minkowski problem introduced by Huang, Lutwak, Yang and Zhang \cite{HLYZ16}, 
for the case $q<0$.
If $\alpha < n+1$, corresponding to the case $q>0$, 
we also establish the same results for even function $f$ and origin-symmetric initial condition,
but for non-symmetric $f$, counterexample is given for the above smooth convergence.
\end{abstract}

\maketitle

\baselineskip16pt
\parskip3pt


\section{Introduction}

Flow generated by the Gauss curvature was first studied by Firey \cite{Fir74} to
model the shape change of tumbling stones.
Since then the evolution of hypersurfaces   by their Gauss curvature has been studied by many authors
 \cite{And96}-\cite{AndGuNi16}, \cite{BCD16}-\cite{ChWang00}, \cite{DaskLee04, GuNi16, Ha94}.
A main interest is to understand the asymptotic behavior of the flows.
It was conjectured that the $\alpha$-power of the Gauss curvature, for $\alpha>\frac{1}{n+2}$, 
deforms a convex hypersurface in $\R^{n+1}$ into a round point.
This is a difficult problem and has been studied by many authors in the last three decades.
The first result was by Chow \cite {Chow85} who provided a proof for the case $\alpha=1/n$.
In \cite{And99} Andrews proved the conjecture for the case $n=2$ and $\alpha=1$. 
Very recently, Brendle, Choi and Daskalopoulos \cite{BCD16}
resolved the conjecture for all $\alpha>\frac{1}{n+2}$, in all dimensions.

As a natural extension, anisotropic flows have also attracted much attention
and have been extensively investigated  \cite{ChZhu99,Ga93,GaLi94}.
They provide alternative proofs for the existence of solutions to elliptic PDEs arising in geometry and physics.
For example a proof based on the logarithmic Gauss curvature flow was given in \cite{ChWang00} 
for the classical Minkowski problem, and in \cite {Wang96} for a prescribing Gauss curvature  problem.
Expansion of convex hypersurfaces by their Gauss curvature has also been studied by several authors 
 \cite{Gerh90,Gerh14, Li10, Sch06, Urb91}.

Let $\M_0$ be a smooth, closed, uniformly convex hypersurface in $\R^n$ enclosing the origin.
In this paper we study the following anisotropic Gauss curvature flow,
\beq\label{flow}
\left\{ { \begin{split}  
  \frac{\p X}{\p t} (x,t) &= - f(\nu) r^{\alpha} K(x,t) \nu,\\
  X(x,0) &=X_0(x),
 \end{split} }\right.
\eeq
where $K(\cdot,t)$ is the Gauss curvature of hypersurface $\M_t$, 
parametrized by $X(\cdot,t):\S^n\to \R^{n+1}$, 
$\nu(\cdot,t)$ is the unit outer normal at $X(\cdot, t)$,
and $f$ is a given positive smooth function on $\S^n$.
We denote  by $r = |X(x,t)|$ the distance from the point $X(x, t)$ to the origin,
and regard it as a function of $\xi=\xi (x,t) := X(x,t) / |X(x,t)|\in \S^n $.
We call it the radial function of $\M_t$.

When $\alpha\ge n+1$, we prove that if $f\equiv 1$,
the hypersurface $\M_t$ converges smoothly after normalisation to a sphere. 
For general positive and smooth function $f$, we prove that 
$\M_t$ converges smoothly after normalisation to a hypersurface 
which is a solution to the classical Aleksandrov problem \cite {Aleks42} ($\alpha=n+1$) 
and to the dual $q$-Minkowski problem  \cite {HLYZ16}  for $q\le 0$ ($\alpha>n+1$).
Our proof of the smooth convergence consists of two parts:
\begin{itemize}
\item [(i)] uniform positive upper and lower bounds for  the radial function of  $\widetilde \M_t$; and 
\item [(ii)] uniform positive upper and lower bounds for the principal curvatures of  $\widetilde \M_t$,
\end{itemize}
where $\widetilde \M_t$ is the normalised solution given in \eqref{rescaled surface} below. 
Once the upper and lower bounds for the principal curvatures are established, 
higher order regularity of $\widetilde \M_t$ follows from Krylov's regularity theory. 
We then infer the smooth convergence by using the functional \eqref {functional}.
Our proof of part (ii) applies to the flow \eqref{flow} for all $\alpha\in\R^1$, as long as part (i) is true.
In particular it also applies to the original Gauss curvature flow (namely the case $\alpha=0$) 
for which the estimates (ii) were established for $f\equiv 1$ in \cite{GuNi16}.

When $\alpha<n+1$,  we establish the smooth convergence for even $f$, 
provided the initial hypersurface is symmetric with respect to the origin.
We also give examples to show that, without the symmetry assumption,
part (i) above fails and so the smooth convergence does not hold.

As a result we also obtain the existence of smooth symmetric solutions 
to the dual $q$-Minkowski problem for all $q\in\R^1$, 
assuming the function $f$ is smooth, positive, and $f$ is even when $q>0$.
The dual $q$-Minkowski problem was recently introduced
by Huang, Lutwak, Yang, and Zhang \cite {HLYZ16}
where they proved the existence of symmetric weak solutions for the case $q \in (0, n+1)$
under some conditions.
Their conditions were recently improved by Zhao \cite{ZY17}.
For $q<0$ the existence and uniqueness of weak solution were obtained in \cite {ZY}.
When $q=n+1$ it is the logarithm Minkowski problem studied in \cite {BLYZ}. 
In \cite {BLYZ} and  \cite {HLYZ16}, the existence of weak solutions was proved when the inhomogeneous term is
a non-negative measure not concentrated in any sub-spaces. 
For other related results, we refer the readers to \cite{BHP17,BLYZZ,Schn14}
and the references therein.
 
Let us state our first main result as follows.
 
\begin{theorem}\label{thmA}
Let $\M_0$ be a smooth, closed, uniformly convex hypersurface in $\R^{n+1}$ enclosing the origin.
If $f \equiv 1$ and $\alpha \ge n+1$,
then the flow \eqref{flow} has a unique smooth solution $\M_t$ for all time $t>0$,
which converges to the origin.
After a proper rescaling $X\to \phi^{-1}(t)X$, 
the hypersurface $\widetilde \M_t = \phi^{-1}(t) \M_t$ converges exponentially fast
to the unit sphere centred at the origin in the $C^\infty$ topology.
\end{theorem}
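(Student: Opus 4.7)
The plan is to convert \eqref{flow} into a parabolic Monge--Amp\`ere equation on $\S^n$ via the support function, rescale so that the unit sphere becomes a stationary solution, and then close the argument with a priori estimates plus the monotone functional \eqref{functional}. First I would parametrise $\M_t$ by its outer normal so that the support function $h(x,t)=\langle X(x,t),x\rangle$ satisfies
$$\p_t h \;=\; -\,\frac{r^{\alpha}}{\det(\nabla^{2}h+h I)},\qquad r=\sqrt{h^{2}+|\nabla h|^{2}},$$
with $\nabla$ the covariant derivative on $\S^n$. Centred spheres evolve by the ODE $\phi'(t)=-\phi(t)^{\alpha-n}$, which for $\alpha\ge n+1$ keeps $\phi(t)>0$ for all $t$ and sends $\phi\to 0$ only as $t\to\infty$. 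Setting $\tilde h(x,t)=\phi(t)^{-1}h(x,t)$ (and, if needed, reparametrising time) recasts the problem on $\S^n\times[0,\infty)$ as an equation for which $\tilde h\equiv 1$ is stationary, and I study $\widetilde\M_t$ from this point on.

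For part (i), the positive upper and lower bounds on the radial function of $\widetilde\M_t$, the comparison principle does essentially all of the work once $f\equiv 1$: centred spheres are themselves solutions of \eqref{flow}, so sandwiching $\M_0$ between two of them (possible since $\M_0$ is uniformly convex and encloses the origin) yields the same sandwich for $\M_t$, and dividing by $\phi(t)$ gives $0<c_1\le\tilde h\le c_2<\infty$ uniformly in $t$. When $\alpha=n+1$ every centred sphere is a fixed point of the rescaled equation, so $\phi(t)$ must be pinned down by an extra homogeneous normalisation to single out the unit sphere as the limit. Part (ii), the two-sided bound on the principal curvatures---equivalently, on the eigenvalues of the matrix $b_{ij}=\nabla_{ij}\tilde h+\tilde h\delta_{ij}$ of principal radii---is the main technical obstacle. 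Following the Tso--Chow--Andrews pattern I would write down the parabolic equations satisfied by the eigenvalues of $(b_{ij})$ and apply the maximum principle to auxiliary quantities of the form $\log\lambda_{\max}(b)+A(\tilde h,|\nabla\tilde h|)$ for the upper bound on the radii, and a dual construction or a lower bound on $\det b$ for the lower bound; the factor $r^{\alpha}$ couples curvature to the radial position, so $A$ must be chosen to absorb both, and the $C^{0}$ bounds from part (i) are exactly what let these auxiliary functions close.

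With these two-sided bounds the equation for $\tilde h$ is uniformly parabolic and concave in $\nabla^{2}\tilde h$, so Krylov--Safonov together with Schauder bootstrapping give uniform $C^{k}$ estimates for every $k$, yielding long-time existence ($T=\infty$) and smoothness of $\widetilde\M_t$. Smooth convergence then follows from the functional \eqref{functional}: its derivative along the rescaled flow is non-positive and vanishes precisely at centred spheres, so the uniform smoothness forces any $\omega$-limit to be such a sphere, and the normalisation selects the unit sphere. To obtain the exponential rate I would linearise the rescaled equation at $\tilde h=1$; the resulting operator on $\S^n$ is essentially $\Delta_{\S^n}-(\alpha-n-1)$, the zero mode is removed by the normalisation, and the first spherical harmonics $x_i$ cannot appear because translating the origin breaks the invariance of $r=|X|$, so a positive spectral gap remains. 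A standard interpolation between this spectral gap at the linearised level and the uniform higher-order bounds then yields exponential $C^\infty$ convergence of $\widetilde\M_t$ to the unit sphere, completing Theorem~\ref{thmA}.
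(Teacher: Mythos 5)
Your outline matches the paper's architecture for the bulk of the argument: reduction to the support-function equation, $C^0$ bounds for $\widetilde\M_t$ by comparison with centred spheres (the paper's Lemma \ref{s3 lem1} is exactly this ODE/sandwich argument, and for $\alpha=n+1$ it likewise only yields $\min u_0\le u\le\max u_0$, with the dilation freedom fixed by the conserved quantity $\int_{\S^n}\log r\,d\xi$), curvature bounds by maximum-principle arguments (the paper bounds $K$ from above via Tso's function $-u_t/(u-\eps_0)$ and then passes to the polar dual flow for the two-sided principal curvature bounds — your ``dual construction'' alternative is in fact the route taken), Krylov regularity, and the monotone functional to identify the limit. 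The genuine divergence is in how the exponential rate is obtained. You propose linearising at $\tilde h\equiv1$ and invoking a spectral gap for $\Delta_{\S^n}+(n+1-\alpha)$; the paper avoids linearisation entirely: for $\alpha>n+1$ it sandwiches $u$ between the two explicit shrinking-sphere solutions $u_i=[1-(1-c_i^{q})e^{qt}]^{1/q}$, which gives exponential $C^0$ decay for free, and for $\alpha=n+1$ it proves a decay estimate $\max|\nabla r|/r\le Ce^{-\gamma t}$ by applying the maximum principle to $\tfrac12|\nabla\log r|^2$ (with a separate Poincar\'e argument for $n=1$); in both cases interpolation against the uniform $C^k$ bounds upgrades the rate to $C^\infty$. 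The barrier/gradient-decay route is more elementary and gives the rate globally in time, not just near the limit.

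Two soft spots in your version of the rate argument. First, a spectral gap for the linearisation only gives exponential decay once the solution is known to be, and to remain, in a small neighbourhood of the stationary solution; you get subsequential convergence from the functional, but you still need a nonlinear stability lemma (controlling the quadratic error against the gap) to convert this into exponential convergence — this is standard but is asserted rather than proved. Second, the remark about the first spherical harmonics is slightly off: for $\alpha=n+1$ the linearised operator is $\Delta_{\S^n}$, so the modes $x_i$ carry eigenvalue $-n$ and decay; the only neutral direction is the constant mode, which is indeed killed by the normalisation $\int_{\S^n}\log r\,d\xi=0$ (conserved along the flow). Neither issue is fatal, but the paper's explicit barriers make both of them unnecessary.
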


Our choice for the rescaling factor $\phi(t)$ is motivated by the following calculation.
Assume
\beq\label{homothety}
X(\cdot,t) = \phi(t) X_0(\cdot)
\eeq
evolves under the flow \eqref{flow} with initial data $\phi_0X_0$,
where $\phi$ is a positive function and $\phi_0 = \phi(0)$.
Since the normal vector is unchanged by the homothety,
we obtain, 
by differentiating \eqref{homothety} in $t$ and multiplying $\nu_0=\nu(\cdot,t)$ to both sides,
\beq\label{s1 t1}
\phi'(t)\langle X_0, \nu_0 \rangle = -\phi^{\alpha-n} (t) f r_0^\alpha K_0,
\eeq
where $K_0$ is the Gauss curvature of $\M_0 = X_0(\S^n)$,
and $r_0$  is the radial function of $\M_0$.
By \eqref{s1 t1} we have
\beqs
\phi'(t) = - \lambda \phi^{\alpha - n} (t)
\eeqs
for some constant $\lambda>0$.
We may suppose $\lambda =1$. Then
\beq\label{scaling factor}
{ \begin{split}  
  \phi (t) &= \phi_0 e^{-t},\;\;&\text{if}\;\; \alpha=n+1,\\
  \phi (t) &= [\phi_0^{q} -q t]^{\frac{1}{q}},\;\;&\text{if}\;\; \alpha \not=n+1 , 
 \end{split} }
\eeq
where $q =n+1- \alpha $, $\phi_0 = \phi(0)>0$.
By \eqref{s1 t1}, one sees that $\M_0$ satisfies the following elliptic equation
\beq\label{soliton sol}
\frac{u(x)}{r^\alpha(\xi) K(p)} = f(x) \ \ \ \forall\ x\in \S^n,
\eeq
where $p \in \M_0$ is the point such that the unit outer normal $\nu(p)=x$, $\xi = p/|p| \in \S^n$,
and $u$ is the support function of $\M_0$, given by
\beqs
u(x)  = \sup\{\langle x, y \rangle:\ y\in \M_0\}.
\eeqs
The above calculation suggests that if we expect that our flow converges to a soliton
which satisfies \eqref{soliton sol}, it is reasonable to rescale the flow by a time-dependent factor
$\phi(t)$ which is in the form of \eqref{scaling factor}.

Let us introduce the normalised flow for \eqref{flow}.
Let
\beq\label{rescaled surface}
{\begin{split}
\widetilde M_t & = \phi^{-1}(t) \M_t, \\
\widetilde X(\cdot, \tau) & = \phi^{-1}(t) X(\cdot,t) ,
\end{split}} \eeq
where
$$
\tau = \left\{
{\begin{split}
 & t \hskip80pt \text{if}\ \alpha = n+1,\\
 & \frac{1}{q} \log\frac{\phi_0^{q}}{\phi_0^{q}-qt} \ \ \ \ \ \text{if}\ \alpha \ne n+1.
 \end{split}} \right.
$$
Then $\widetilde X(\cdot,\tau)$ satisfies the following normalised flow
\beq\label{normalised flow}
\left\{{\begin{split}
\frac{\p X}{\p t} (x,t) &= - f(\nu) r^\alpha K(x,t) \nu + X(x,t),\\
X(\cdot,0) &= \phi_0^{-1} X_0.
\end{split}}\right.
\eeq 
For convenience we still use $t$ instead of $\tau$ to denote the time variable and omit the ``tilde"
if no confusions arise. 

The asymptotic behavior of \eqref{flow}
is equivalent to  the long time behaviour of the normalised flow \eqref{normalised flow}.
Indeed, in order to prove Theorem \ref{thmA},
we shall establish the a priori estimates for \eqref{normalised flow},
and show that $|X| \to 1$ smoothly as $t\to \infty$, provided $f\equiv 1$ and $\phi_0$ is chosen such that
\beq\label{phi_0}
{ \begin{split}  
  \phi_0 = \exp(\frac{1}{o_n}\int_{\S^n} \log r_0(\xi)d\xi), &\;\;\text{if}\;\alpha=n+1,\\
  \min_{\S^n}\, r_0(\cdot)  \le \phi_0 \le \max_{\S^n}\, r_0(\cdot), &\;\;\text{if}\; \alpha>n+1,
 \end{split} }
\eeq
where $o_n = |\S^n|$ denotes the area of the sphere $\S^n$.

The following functional plays an important role in our argument,
\beq\label{functional}
\J_\alpha (\M_t) =
\left\{{ \begin{split}  
  \int_{\S^n} f(x) \log u(x,t) dx- \int_{\S^n} \log r(\xi,t)d\xi, &\;\;\text{if}\;\alpha=n+1,\\
  \int_{\S^n} f(x) \log u(x,t) dx - \frac1q\int_{\S^n} r^q(\xi,t)d\xi, &\;\;\text{if}\; \alpha \not = n+1,
 \end{split} }\right.
\eeq
where $q = n+1-\alpha $ as above,
$u(\cdot, t)$ and $r(\cdot, t)$ are respectively the support function and radial function of $\M_t$.
This functional was introduced in \cite{HLYZ16}.
We will show in Lemma \ref{descending flow} below that
$\J_\alpha(\M_t)$ is strictly decreasing unless $\M_t$ solves the elliptic equation \eqref{soliton sol}.
By this functional and the a priori estimates for the normalised flow \eqref{normalised flow},
we obtain the following convergence result for the anisotropic flow \eqref{flow}.

\begin{theorem}\label{thmB}
Let $\M_0$ be a smooth, closed, uniformly convex hypersurface in $\R^{n+1}$
which contains the origin in its interior. 
Let $f$ be a smooth positive function on $\S^n$.
If $\alpha>n+1$, then the flow \eqref{flow} has a unique smooth solution $\M_t$ for all time $t>0$.
When $t\to \infty$, the rescaled hypersurfaces $\widetilde \M_t$
converge smoothly to the unique smooth solution of  \eqref{soliton sol},
which is a minimiser of the functional \eqref{functional}.
\end{theorem}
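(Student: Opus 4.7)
The plan is to execute the two-step programme (i)--(ii) outlined by the authors, then invoke Krylov's theorem for higher regularity, and finally use the monotone functional $\J_\alpha$ from Lemma~\ref{descending flow} to upgrade subsequential to full convergence. All estimates will be carried out for the support function $u(x,t)$ of $\widetilde\M_t$, which satisfies the parabolic Monge--Amp\`ere equation
$$ u_t \;=\; -\,f(x)\,\frac{r^{\alpha}}{\det(\nabla^2 u+u\,I)} \;+\; u \qquad \text{on }\S^n\times(0,\infty), $$
where $r = \sqrt{u^2+|\nabla u|^2}$. Short-time existence and the preservation of convexity are standard, so the task reduces to uniform a priori estimates.

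For step (i), since $\alpha>n+1$ we have $q = n+1-\alpha < 0$, and the functional reads
$$ \J_\alpha(\widetilde\M_t) \;=\; \int_{\S^n} f(x)\log u(x,t)\,dx \;+\; \frac{1}{|q|}\int_{\S^n} r(\xi,t)^{-|q|}\,d\xi. $$
Monotonicity from Lemma~\ref{descending flow} gives $\J_\alpha(\widetilde\M_t) \le \J_\alpha(\widetilde\M_0)$. Because $-1/q>0$, any pointwise collapse $r\to 0$ would blow up the second integral, and, by the Lipschitz/convexity properties of support functions, any runaway of $\max u$ (equivalently $\max r$) would blow up $\int f\log u$; together with the coercivity of the dual $L_q$ Minkowski functional established in \cite{HLYZ16, ZY} and the normalisation of $\phi_0$, these rule out both possibilities. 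The outcome is uniform two-sided bounds $0 < c\le u,\,r\le C$ on $\S^n\times(0,\infty)$.

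Step (ii), the $C^2$ bounds, is the technical heart of the argument and is the step I expect to be the main obstacle. The upper bound on the principal curvatures $\kappa_i$ is obtained by applying the maximum principle to an auxiliary function of the form
$$ W(x,t) \;=\; \log(-u_t + u) \;-\; A\log u \;+\; \tfrac{B}{2}|\nabla u|^2, $$
with constants $A,B$ chosen, using (i), to absorb the cross terms generated by the anisotropy $f(x)r^{\alpha}$ when differentiating the speed. The complementary lower bound on $\kappa_i$ (equivalently an upper bound on $\det(\nabla^2 u + u\,I)$) follows from a symmetric maximum-principle computation applied to $-u_t+u$, combined with the just-obtained upper bound on $\kappa_i$ and the two-sided bounds on $u$. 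This follows the scheme of Tso, Chou--Wang and Guan--Ni; the new feature is the extra factor $r^{\alpha}$, whose $x$-derivatives are controlled via $r=\sqrt{u^2+|\nabla u|^2}$ and step (i).

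With $0<c\le\kappa_i\le C$ and $0<c\le u\le C$ the equation is uniformly parabolic and concave in $\nabla^2 u$, so Evans--Krylov and Schauder bootstrap give uniform $C^k$ bounds for every $k$. Integrating the identity of Lemma~\ref{descending flow} yields
$$ \int_0^\infty \Bigl|\tfrac{d}{dt}\J_\alpha(\widetilde\M_t)\Bigr|\,dt < \infty, $$
so $\widetilde\M_t$ subconverges in $C^\infty$ to a smooth solution of \eqref{soliton sol}, which is a critical point of $\J_\alpha$. Uniqueness of the smooth minimiser for $q<0$ (see \cite{ZY}) then promotes subsequential convergence to full convergence, and the limit is the unique smooth solution of the dual $q$-Minkowski equation, completing the proof.
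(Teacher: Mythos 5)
The main problem is in your step (ii). The auxiliary function you propose, $W=\log(-u_t+u)-A\log u+\tfrac{B}{2}|\nabla u|^2$, controls $-u_t+u=f\,r^{\alpha}K$, i.e.\ (given the $C^0$ bounds) the \emph{Gauss} curvature, and your ``symmetric'' computation for $-u_t+u$ likewise only yields a bound on $\det(\nabla^2u+uI)=1/K$. Two-sided bounds on $K=\prod_i\kappa_i$ do \emph{not} give two-sided bounds on the individual $\kappa_i$: the matrix $\nabla^2u+uI$ can degenerate with one eigenvalue tending to $\infty$ and another to $0$ while the determinant stays pinned, and then the equation is not uniformly parabolic and Evans--Krylov does not apply. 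This is exactly the step the paper handles with a genuinely different device: it passes to the polar dual $\M_t^*$, which evolves by an expanding flow, and runs the maximum principle on $\log h_*^{\tau\tau}-\beta\log u^*+\tfrac{A}{2}(r^*)^2$, where $h_*^{\tau\tau}$ is the largest eigenvalue of the inverse of $\nabla^2u^*+u^*I$ (the largest principal curvature of the dual). That bounds $\max_i\kappa_i^*$ from above; combined with the lower bound on $K^*$ coming from the upper bound on $K$ and the duality relation $u^{n+2}(u^*)^{n+2}=K\,K^*$, it pins every $\kappa_i^*$ on both sides, and duality transfers this to $\M_t$. Your proposal is missing any estimate on the largest principal curvature (or radius), so uniform parabolicity --- the hinge of the whole regularity argument --- is not established.

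Two smaller remarks. For step (i) with $\alpha>n+1$ the paper does not use the functional at all: since $q=n+1-\alpha<0$, the ODE $\frac{d}{dt}u_{\min}\ge -(f\,u_{\min}^{-q}-1)u_{\min}$ immediately traps $u_{\min}$ (and symmetrically $u_{\max}$) between fixed constants. Your functional route can be salvaged, but not in the order you state: the claim that a runaway of $\max u$ blows up $\int f\log u$ is false until you already know $u_{\min}\ge c$ (otherwise $\int f\log u$ can be driven to $-\infty$), so you must first extract the lower bound from the coercive term $\tfrac{1}{|q|}\int r^{-|q|}$ (using that $r\le 2u_{\min}$ on a fixed cap) against the mild $\log u_{\min}$ divergence, and only then bound $\max u$. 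Finally, for the full convergence the paper proves uniqueness of the smooth solution of \eqref{soliton sol} directly by a maximum-principle argument on $u_1/u_2$, using $\alpha>n+1$; citing the weak-solution uniqueness of \cite{ZY} is acceptable but a self-contained two-line argument is available and is what makes the theorem's ``unique smooth solution'' claim honest.
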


When $\alpha = n+1$, in order that the solution of \eqref{flow} converges to a solution of \eqref{soliton sol},
we assume that $f\in C^\infty(\S^n;\R_+)$ and satisfies the following conditions
\beqn
 &&\int_{\S^n} f = o_n:=|\S^n|, \label{Aleks f cdt1}\\
 && \int_{\omega} f <|\S^n|- |\omega^*|  \label{Aleks f cdt2}
\eeqn
for any spherically convex subset $\omega\subset \S^n$.
Here $|\cdot|$ denotes the $n$-dimensional Hausdorff measure,
and $\omega^* \subset \S^n$ is the dual set of $\omega$,
namely $\omega^* = \{\xi\in \S^n:\;\;x\cdot \xi \le 0,\;\;\forall\; x\in \omega\}$.

\begin{theorem}\label{thmC}
Let $\M_0$ be  as in Theorem \ref{thmB}.
Assume $\alpha=n+1$ and \eqref{Aleks f cdt1}, \eqref{Aleks f cdt2} hold.
Then \eqref{flow} has a unique smooth solution $\M_t$ for all time $t>0$.
When $t\to \infty$, the rescaled hypersurfaces $\widetilde \M_t$
converge smoothly  to the smooth solution of \eqref{soliton sol}, 
which is a minimiser of the functional \eqref{functional}.
\end{theorem}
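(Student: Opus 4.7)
The plan is to follow the same three-stage scheme as for Theorem \ref{thmB} -- uniform positive upper and lower bounds on the support and radial functions of $\widetilde\M_t$, then uniform two-sided bounds on its principal curvatures, then long-time convergence via the functional $\J_{n+1}$ -- but with a new ingredient at stage (i) to handle the borderline case $\alpha=n+1$, in which the scaling exponent no longer forces automatic compactness. First I would recast \eqref{normalised flow} with $\alpha=n+1$ as a scalar parabolic Monge--Amp\`ere equation on $\S^n$ for the support function $u(\cdot,t)$ of $\widetilde\M_t$, using $K = 1/\det(\nabla^2 u + u\, I)$; short-time existence of a smooth, uniformly convex solution is then standard.

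The $C^0$ estimate is the \emph{main obstacle}. The monotonicity of $\J_{n+1}$ from Lemma \ref{descending flow} gives
\[
\int_{\S^n} f(x)\log u(x,t)\, dx - \int_{\S^n} \log r(\xi,t)\, d\xi \;\le\; \J_{n+1}(\widetilde\M_0)
\]
for all $t\ge 0$, and one must convert this integral control into pointwise bounds on $u$ and $r$. I would argue by contradiction in the spirit of Aleksandrov's original treatment of \eqref{soliton sol}: if $\max u(\cdot,t_k)\to\infty$ or $\min u(\cdot,t_k)\to 0$ along a sequence, then after renormalisation the rescaled support function subconverges to that of a degenerate convex body whose Gauss image concentrates on a spherically convex subset $\omega\subset \S^n$; plugging into $\J_{n+1}$, the dominant terms diverge to $-\infty$ precisely when \eqref{Aleks f cdt2} holds with strict inequality, contradicting the upper bound. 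The mass condition \eqref{Aleks f cdt1} together with the choice \eqref{phi_0} pins down the overall scale. Making this concentration/exhaustion dichotomy quantitative, so that \eqref{Aleks f cdt1}--\eqref{Aleks f cdt2} are used essentially, is the delicate step.

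Once $c_1\le u,r\le c_2$ is established, stage (ii) proceeds exactly as the introduction emphasises is possible for any $\alpha\in\R$. A parabolic maximum-principle argument applied to an auxiliary quantity of the form $\log K - A u + B|X|^2$ yields an upper bound on $K$, and hence on the flow speed, while an analogous argument applied to $\log\lambda_{\max}$, with $\lambda_i$ denoting the eigenvalues of $\nabla^2 u + u\, I$, yields the lower bound on $K$. These estimates make \eqref{normalised flow} uniformly parabolic with concave dependence on the Hessian, so the Krylov--Safonov and Evans--Krylov theorems deliver uniform $C^{2,\beta}$ bounds, and bootstrapping gives uniform $C^\infty$ bounds on $\widetilde\M_t$.

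For stage (iii), the $C^\infty$ bounds together with the non-increase of $\J_{n+1}$ imply that $\int_0^\infty \bigl(-\tfrac{d}{dt}\J_{n+1}\bigr)\,dt < \infty$. By Lemma \ref{descending flow} the time derivative vanishes exactly on smooth solutions of \eqref{soliton sol}, so every $C^\infty$ subsequential limit $\M_\infty$ solves \eqref{soliton sol} and is a minimiser of $\J_{n+1}$ subject to \eqref{Aleks f cdt1}. Uniqueness of the smooth solution to the Aleksandrov problem under \eqref{Aleks f cdt1}--\eqref{Aleks f cdt2} then upgrades subsequential convergence to convergence of the full family $\widetilde\M_t$ in $C^\infty$, completing the proof.
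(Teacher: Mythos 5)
Your overall architecture ($C^0$ bounds $\Rightarrow$ curvature bounds $\Rightarrow$ convergence via $\J_{n+1}$ and uniqueness of the soliton) matches the paper's, and stages (ii) and (iii) are essentially the arguments used there. The genuine issue is stage (i), which you correctly identify as the main obstacle but do not actually close, and where your sketch as stated does not work. You propose to rule out degeneration by showing that if the Gauss image concentrates on a spherically convex set $\omega$ then ``the dominant terms diverge to $-\infty$\ldots contradicting the upper bound.'' But the monotonicity of $\J_{n+1}$ only gives $\J_{n+1}(\M_t)\le \J_{n+1}(\M_0)$, i.e.\ an \emph{upper} bound; a quantity tending to $-\infty$ is perfectly consistent with that. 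For a contradiction you would need to show that degeneration forces $\J_{n+1}(\M_t)\to+\infty$, and making that quantitative is exactly the hard part: since $\int_{\S^n}\log r\,d\xi$ is conserved (this is where \eqref{Aleks f cdt1} enters), one must show that $\int_{\S^n} f\log u$ blows up to $+\infty$ under collapse, which requires controlling the set where $u\to 0$ against the set where $u\to\infty$, and this is precisely where \eqref{Aleks f cdt2} must be used. None of this is carried out, so the crucial estimate of the theorem is missing.

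The paper avoids this entirely: Lemma 3.2 obtains $1/C\le u\le C$ by a \emph{barrier} argument, using the already-known generalised solution $\N^*$ of the Aleksandrov problem (whose existence under \eqref{Aleks f cdt1}--\eqref{Aleks f cdt2} is classical, and which has nonempty interior containing the origin precisely because of \eqref{Aleks f cdt2}). Its polar dual $\N$, and every dilate $s\N$, is a stationary generalised solution of the normalised flow when $\alpha=n+1$, so two dilates sandwiching $\M_0$ trap $\M_t$ for all time by a comparison argument for generalised Monge--Amp\`ere solutions. If you want to keep your variational route you would essentially be re-proving the compactness part of Aleksandrov's existence theorem inside the flow; that is doable but is the content you have omitted. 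Two smaller points: in stage (iii) the smooth solution of \eqref{soliton sol} for $\alpha=n+1$ is unique only \emph{up to dilation}, so to get a unique limit you must combine this with the conservation of $\int_{\S^n}\log r(\xi,t)\,d\xi$ along the flow (which fixes the scale of every subsequential limit); and the paper's lower curvature bound is obtained by passing to the dual flow rather than by a direct maximum principle on $\lambda_{\max}(\nabla^2u+uI)$, though your variant is plausible.
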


Theorem \ref{thmC} gives a proof for the classical Aleksandrov problem
in smooth category by a curvature flow approach.
We point out that conditions \eqref{Aleks f cdt1} and \eqref{Aleks f cdt2}
are necessary for Aleksandrov's problem \cite{Aleks42}, 
but for the flow \eqref{flow}, condition  \eqref{Aleks f cdt1} is satisfied 
by any bounded positive function $f$ provided we make a scaling of the time $t$. 
At the end of the paper we will show that Theorem \ref{thmC} does not hold 
if \eqref{Aleks f cdt2} is violated.

Let $\M$ be a convex hypersurface in $\R^{n+1}$ with the origin $\O$ in its interior.
Then $\M$ is a spherical radial graph via the mapping
\beqs
\vec{\hskip1pt r}: \xi\in \S^n \mapsto r(\xi)\xi \in \M.
\eeqs
Let $\A=\A_{\M}$ be a set-valued mapping given by
\beqs
\A(\omega) = \cup_{\xi\in \omega}\{\nu(\vec{\hskip1pt r}(\xi)) \},
\eeqs
where $\nu$ is the Gauss map of $\M$.
Aleksandrov raised the following problem:
given a finite nonnegative Borel measure $\mu$ on $\S^n$,
whether there exists a convex hypersurface $\M$ such that
\beq\label{Aleks problem}
|\A(\omega) |= \mu(\omega)\ \ \forall\ \text{Borel sets}\  \omega\subset \S^n.
\eeq
The left hand side of \eqref{Aleks problem} defines a measure on $\S^n$,
which is called the integral Gauss curvature of $\M$.
The existence and uniqueness (up to a constant multiplication) of weak solution to this problem
were obtained by Aleksandrov \cite{Aleks42},
assuming that $\mu$ is nonnegative, $\mu(\S^n) = o_n$
and $\mu (\S^n \setminus \omega) > |\omega^*|$ for any convex $\omega \subset \S^n$.
These conditions are equivalent to \eqref{Aleks f cdt1} and \eqref{Aleks f cdt2},
if $\mu$ has a density function $f$.
If $\M$ is a hypersurface with prescribed integral Gauss curvature $\mu$,
then its polar dual
\beq\label{polar dual}
\M^*=\p \{z\in \R^{n+1}:\;z\cdot y \le 1\;\;\forall \ y\in \M\}\
\eeq
solves \eqref{soliton sol} for $\alpha=n+1$.

For general $\alpha$, the limiting hypersurface of the flow \eqref{flow} is related to
the dual Minkowski problem introduced most recently in \cite{HLYZ16}.
Given a real number $q$ and a finite Borel measure $\mu$ on the sphere $\S^n$,
the authors asked if there exists a convex body $\Omega$ with the origin inside
such that its $q$-th dual curvature measure 
\beq\label{dual Mink prob}
\widetilde C_q(\Omega,\cdot) = \mu(\cdot).
\eeq
Denote by $\M$ the boundary of $\Omega$, and by $\A^* = \A^*_\M$ the ``inverse" of $\A_\M$,
namely
\beqs
\A^* (\omega) =\{\xi \in \S^n\ : \ \nu(\vec{\hskip1pt r}(\xi))\in\omega\} . 
\eeqs
The $q$-th dual curvature measure is defined by
\beq\label{dual curvature meas}
\widetilde C_q(\Omega,\omega) = \int_{\A^*(\omega)} r^{q}(\xi)d\xi.
\eeq
Hence the dual Minkowski problem \eqref{dual Mink prob} is equivalent to the equation
\beq\label{s2 a1}
r^q  |\Jac \A^* | = f \ \ \text{on} \ \S^n,
\eeq
provided $\mu$ has a density function $f$.
Here $|\Jac \A^*|$ denotes the determinant of the Jacobian of the mapping
$x \mapsto \xi = \A_\M^*(x)$.
By \eqref{s2 t9} below, we see that 
the dual Minkowski problem is equivalent to the solvability of the equation \eqref{soliton sol} with $\alpha=n+1-q$.
Noting that
\beq {\begin{split}
\A_{\M}^*(\omega)
&= \{\xi \in \S^n\ : \ \nu(\vec{\hskip1pt r}(\xi))\in\omega\} \\
&= \{\nu^*(\vec{\hskip1pt r}^*(x)): \  x\in \omega\}  \\
&= \A_{\M^*} (\omega),                          
\end{split}} \eeq
where $\nu$ and $\nu^*$ denote the unit outer normal of $\M$ and $\M^*$ respectively,
we also see that if $\M^*$ solves the Aleksandrov problem \eqref{Aleks problem},
then $\M$ solves the dual Minkowski problem \eqref{dual Mink prob} for $q=0$, 
and so is a solution to \eqref{soliton sol} for $\alpha=n+1$. 

When $\alpha<n+1$, we consider the behaviour of origin-symmetric hypersurfaces under the flow \eqref{flow},
assuming that $f$ is an even function, namely $f(x) = f(-x)$ for all $x\in \S^n$.
In this case the solution $\M_t$ shrinks to a point in finite time, namely  as $t\to T$ for some $T<\infty$.
Our next theorem shows that the normalised solution converges smoothly if $f$ is smooth and positive.

\begin{theorem}\label{thmDa}
Let $\M_0$ be a smooth, closed, uniformly convex, and origin-symmetric hypersurface in $\R^{n+1}$.
Let $ \alpha<n+1$.
If $f$ is a smooth, positive, even function on $\S^n$,
then the flow \eqref{flow} has a unique smooth solution $\M_t$.
After normalisation, the rescaled hypersurfaces $\widetilde \M_t$ converge smoothly to
a smooth solution of \eqref{soliton sol},
which is a minimiser of the functional \eqref{functional}.
Moreover, if $f\equiv 1$ and $0\le \alpha<n+1$, 
then $\widetilde \M_t$ converge smoothly to a sphere.
\end{theorem}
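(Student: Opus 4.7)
My plan is to follow the same four-stage strategy used for Theorems \ref{thmB}--\ref{thmC}: (a) uniform $C^0$ bounds on the support and radial functions of the rescaled surfaces $\widetilde\M_t$; (b) two-sided principal curvature bounds; (c) Krylov regularity to promote to $C^{k,\beta}$; and (d) smooth convergence via the monotone functional \eqref{functional}. The authors emphasize that step (b) is $\alpha$-independent once (a) holds, and (c)--(d) are by now routine; so the genuine work is concentrated in the $C^0$ step, where the paper's own non-symmetric counterexamples show that evenness of $f$ and origin-symmetry of $\M_0$ are essential, and in identifying the limit as a sphere when $f\equiv 1$.

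First I would verify that origin-symmetry is preserved under \eqref{normalised flow}. Writing the flow in support-function form on $\S^n$, namely $\partial_t u = u - f(x)(u^2+|\nabla u|^2)^{\alpha/2}\det(\nabla^2 u + uI)^{-1}$, evenness of $f$ and of $u(\cdot,0)$ implies that $u(-x,t)$ solves the same initial value problem, so uniqueness forces $u(x,t)\equiv u(-x,t)$ and each $\widetilde\M_t$ is origin-symmetric. This gives the elementary identity $\max_{\S^n}\widetilde u=\max_{\S^n}\widetilde r$ and the pointwise bound $\widetilde r\le \widetilde u$ on $\S^n$. The $C^0$ estimate itself comes from Lemma \ref{descending flow}, which yields $\J_\alpha(\widetilde\M_t)\le \J_\alpha(\widetilde\M_0)$, i.e.
\[
\int_{\S^n} f\log \widetilde u\,dx \;\le\; \tfrac{1}{q}\int_{\S^n}\widetilde r^{\,q}\,d\xi + C_0,\qquad q=n+1-\alpha>0,
\]
combined with the Blaschke--Santal\'o inequality for origin-symmetric bodies. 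Concretely, the polar dual $\widetilde\M_t^*$ has support function $1/\widetilde r$ and radial function $1/\widetilde u$; polar duality together with the volume product bound allows one to convert a degeneration of $\min\widetilde u$ (or a blow-up of $\max\widetilde u$) into a violation of the entropy inequality. Since $f$ is pinched between positive constants throughout, one extracts uniform two-sided bounds $0<c_0\le \widetilde u\le C_0<\infty$.

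Once (a) holds, the framework of (b) delivers two-sided principal curvature bounds, Krylov's theorem upgrades them to uniform $C^{k,\beta}$ estimates, and the normalised flow then exists smoothly for all time with uniformly controlled geometry. Any subsequential $C^\infty$ limit is an even smooth solution of the soliton equation \eqref{soliton sol}, and strict monotonicity of $\J_\alpha$ (Lemma \ref{descending flow}) combined with the contradiction argument of Theorems \ref{thmB}--\ref{thmC} promotes this to convergence of the full flow to an even minimiser of \eqref{functional}. For the final claim with $f\equiv 1$ and $0\le \alpha<n+1$, the unit sphere is a solution by inspection and uniqueness of origin-symmetric solutions to the isotropic equation \eqref{soliton sol} in this range (the unit sphere is the unique even critical point of the isotropic functional \eqref{functional}) identifies the limit as a sphere. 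I expect the main obstacle to be the $C^0$ lower bound on $\widetilde u$: the entropy inequality controls $\int f\log\widetilde u$ from only one side, and ruling out flattening of an origin-symmetric body to a lower-dimensional slab requires combining the functional bound with a Blaschke--Santal\'o-type coupling between $\widetilde\M_t$ and its polar. The availability of this coupling only in the symmetric setting is exactly why the non-symmetric case genuinely fails, as the paper's counterexamples confirm.
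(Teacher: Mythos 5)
Your overall four-stage outline matches the paper's, and the observation that origin-symmetry is preserved and that step (b) is $\alpha$-independent is correct. However, your $C^0$ step has a genuine gap. The monotonicity of $\J_\alpha$ only gives $\int_{\S^n} f\log u \le \tfrac1q\int_{\S^n} r^q + C_0$, and since $\int_{\S^n} r^q \le |\S^n|\,r_{\max}^q$ with $q>0$, this is perfectly consistent with $r_{\max}\to\infty$: the right-hand side grows like $r_{\max}^q$, which dominates the $\log r_{\max}$ lower bound coming from symmetry. The ingredient you are missing is the reason for the specific normalisation \eqref{thmDa phi0}: with that choice of $\phi_0$, the quantity $\I_q(\M_t)=\int_{\S^n} r^q(\xi,t)\,d\xi$ satisfies the ODE $\frac{d}{dt}\I_q = q\big(\I_q-\int_{\S^n}f\big)$ and is therefore \emph{exactly conserved}, $\I_q(\M_t)\equiv\int_{\S^n}f$. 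Only then does the entropy inequality yield $r_{\max}\le C$, and the lower bound on $r_{\min}$ then follows not from Blaschke--Santal\'o but from H\"older ($\I_q\le \I_{n+1}^{q/(n+1)}|\S^n|^{\alpha/(n+1)}$, so $\vol(\Omega_t)\ge c>0$) together with the origin-symmetric containment of $\Omega_t$ in the box of half-widths $r_{\max},\dots,r_{\max},r_{\min}$. Your proposed ``Blaschke--Santal\'o coupling'' is not what the paper uses here, and as sketched it does not close the estimate.

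Two further points. First, your claim that ``the unit sphere is the unique even critical point of the isotropic functional'' for $0\le\alpha<n+1$ is exactly the nontrivial rigidity statement to be proved, not a known fact you can cite; the paper proves it by integrating the soliton equation $uS_n=(u^2+|\nabla u|^2)^{\alpha/2}\ge u^\alpha$ together with the dual soliton inequality $u^*S_n^*\ge (u^*)^\alpha$, using $uu^*\ge1$ to deduce $\vol(\Omega)\vol(\Omega^*)\ge\vol^2(B_1)$, and then invoking the equality case of Blaschke--Santal\'o to force $\Omega$ to be an ellipsoid and finally the unit ball. Second, promoting subsequential to full convergence cannot be done by ``the contradiction argument of Theorems \ref{thmB}--\ref{thmC}'': those arguments rest on uniqueness of the soliton, which holds for $\alpha>n+1$ (maximum principle) and for $\alpha=n+1$ (Aleksandrov uniqueness plus conservation of $\int\log r$) but fails in general for $\alpha<n+1$; the paper instead appeals to the monotone-quantities argument of Andrews and Guan--Wang. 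These are fixable but substantive omissions.
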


In the proof of Theorem \ref{thmDa}, 
we will choose the constant $\phi_0$ in the rescaling \eqref{scaling factor} by
\beq\label{thmDa phi0}
\phi_0=\Big(\int_{\S^n} r_0^q(\xi)d\xi \Big \slash \int_{\S^n} f(x) dx \Big)^{\frac1q},
\eeq
where $r_0$ is the radial function of the  initial convex hypersurface $\M_0$.
This choice is such that the functional $\I_q$ in  \eqref{sym Lq-r} is a constant. 
This property is crucial for the uniform positive upper and lower bounds
for the support function in the normalised flow \eqref{normalised flow}.

Without the symmetry assumption, Theorem \ref{thmDa} is not true.
In fact, when $\alpha <n+1$, we find that the hypersurfaces evolving by \eqref{flow} may
reach the origin in finite time, before the hypersurface shrinks to a point.
Therefore the smooth convergence 
does not hold in general.

\begin{theorem} \label{thmD}
Suppose $n\ge 1$ and $\alpha < n+1$.
There exists a smooth, closed, uniformly convex hypersurface $\M_0$,
such that under the flow \eqref{flow},  
\beq\label{unbounded ratio}
 \tR (X(\cdot,t)) := \frac{\max_{\S^n} r(\cdot,t)}{\min_{\S^n} r(\cdot,t)} \to \infty\;\;\text{as}\;\;t\to T
\eeq
for some $T>0$.
\end{theorem}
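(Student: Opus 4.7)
The plan is to exhibit, for each $\alpha<n+1$, a pair $(f,\M_0)$ for which the closest point of $\M_t$ to the origin reaches the origin in finite time while the rest of $\M_t$ remains at a positive distance. Fix a direction $x_0\in\S^n$, and take $f\in C^\infty(\S^n)$ strictly positive and sharply concentrated near $x_0$, with $f(x_0)$ large and $f$ very small outside a small spherical cap $U$ of $x_0$. Take $\M_0$ to be a smooth, uniformly convex perturbation of a unit sphere, translated so that its unique closest point to the origin is $\epsilon x_0$ for small $\epsilon>0$, with outer normal $x_0$ there and principal curvatures of order one.

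Using the support function $u(x,t)$ and the evolution $u_t(x,t)=-f(x)r(\xi,t)^\alpha K(x,t)$, I would monitor $r_{\min}(t)=u(x_0,t)$ for short times. An inscribed-ball barrier provides $K(x_0,t)\ge c_0>0$ for a positive time, and the sharp localisation of $f$ keeps the minimising direction inside $U$. This yields the differential inequality
\[
\frac{d}{dt}r_{\min}(t)\;\le\;-\,c_0\,f(x_0)\,r_{\min}(t)^{\alpha},
\]
whose solution reaches $0$ in finite time $T_0\sim (c_0 f(x_0)(1-\alpha))^{-1}\epsilon^{1-\alpha}$ when $\alpha<1$. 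For $\alpha\in[1,n+1)$ the bare ODE is insufficient, and the argument must exploit a geometric feedback mechanism: the strongly localised inward speed forces $\M_t$ to sharpen near $x_0$, driving the Gauss curvature at the closest point to grow without bound. I would implement this by constructing explicit inner convex barriers (for instance, narrow spherical caps or paraboloids of revolution centred near the closest point) whose tips are pushed by \eqref{flow} to the origin in finite time by a solvable ODE with effective exponent $\beta<1$, and then invoking the comparison principle for \eqref{flow} on a neighbourhood in which $f$ is essentially constant.

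Away from $U$, where $f$ is small, the estimate $|u_t|\le\|f\|_\infty r^\alpha K$ combined with uniform upper bounds on $r$ and $K$ (valid while $\M_t$ remains smooth and strictly convex) shows that $u(x,t)$ decreases at a bounded rate. In particular $u(-x_0,t)$ stays bounded below by a fixed positive constant on $[0,T]$, so that $r_{\max}(t)\ge R/2$ for some $R>0$ independent of $\epsilon$. Combining the two estimates, $\tR(X(\cdot,t))\ge R/(2r_{\min}(t))\to\infty$ as $t\to T$, which is the conclusion of the theorem.

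The main obstacle is the curvature-feedback step for $\alpha\in[1,n+1)$: one must rigorously show that the Gauss curvature at the closest point grows fast enough to overcome the vanishing factor $r^\alpha$. I expect this to be handled by an explicit non-self-similar inner barrier construction together with a maximum-principle argument tailored to the $r^\alpha K$-flow, paralleling the barrier arguments used earlier in the paper for the uniform lower bound of $r$; everything else reduces to routine support-function estimates and ODE comparison.
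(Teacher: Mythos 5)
Your high-level strategy coincides with the paper's: force the near side of $\M_t$ to reach the origin in finite time while the far side stays at a definite distance, using comparison with explicit barriers. However, the two steps that carry all the weight are not actually established. First, your differential inequality $\frac{d}{dt}r_{\min}\le -c_0 f(x_0)r_{\min}^{\alpha}$ rests on a pointwise lower bound $K(x_0,t)\ge c_0$ at the minimising direction, which you attribute to an ``inscribed-ball barrier''. This is backwards: an inscribed ball tangent at a point forces the hypersurface to be \emph{flatter} there, i.e.\ it yields an \emph{upper} bound on $K$. Indeed, at the minimum of the support function one has $\nabla^2 u\ge 0$, so the principal radii are at least $u_{\min}$ and $K\le u_{\min}^{-n}$; no lower bound on $K$ is automatic, and the surface could flatten near the closest point so that the speed $f r^\alpha K$ degenerates. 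So even the case $\alpha<1$ is not closed by your argument. Second, for $\alpha\in[1,n+1)$ you explicitly defer the decisive step (``I would implement this by\ldots'', ``I expect this to be handled by\ldots''): the construction of an explicit barrier whose tip reaches the origin in finite time with Gauss curvature blowing up at a rate that beats the vanishing factor $r^{\alpha}$. That construction \emph{is} the proof. The paper builds an outer sub-solution $\widehat\M_t$, $t\in(-1,0)$, whose profile near the origin is $\phi(\rho,t)=-|t|^{\theta}+|t|^{-\theta+\sigma\theta}\rho^{2}$ for $\rho<|t|^{\theta}$, matched to a $\rho^{1+\sigma}$ profile for $|t|^{\theta}\le\rho\le1$, with $\theta>1/q$, $\sigma=(q\theta-1)/(n\theta)$, $q=n+1-\alpha$; the point is the verified inequality $r^{\alpha}K\ge |t|^{\theta-1}\ge \theta^{-1}\,|\partial_t Y|$, which makes $\widehat\M_t$ a sub-solution (hence it \emph{contains} the true solution by the paper's Lemma 6.1) whose tip collapses onto the origin as $t\nearrow 0$, forcing $\M_t$ to hit the origin first. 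Note also the direction of the comparison: to drive $r_{\min}(\M_t)\to0$ you need a barrier \emph{containing} $\M_t$ whose boundary pinches to the origin, not an inner body whose tip reaches the origin (the latter gives no upper control on $\M_t$). Your description of ``inner convex barriers'' leaves this orientation, and the sub/super-solution inequality the barrier must satisfy, unresolved.

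Two further points. Your far-side estimate invokes ``uniform upper bounds on $r$ and $K$ valid while $\M_t$ remains smooth and strictly convex'', but the paper's curvature bound (Lemma 4.1) requires two-sided $C^0$ bounds that fail exactly as $r_{\min}\to0$; the paper instead compares $\M_t$ from inside with an explicit shrinking flow started from $\partial B_1(z)$ with constant coefficients, to keep $B_{1/2}(z)\subset\M_t$ up to the singular time. Finally, you construct a special concentrated $f$, whereas the paper proves the statement for an arbitrary positive smooth $f$ by first treating $af$ for large $a$ and then rescaling $\M_t\mapsto a^{-1/q}\M_t$; since the flow \eqref{flow} is posed for a given $f$, your version proves a weaker assertion.
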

  
Equation \eqref{soliton sol} can be written, in terms of the support function $u$,
as a Monge-Amp\`ere equation on the sphere, 
\beq\label{soliton sol-u}
\det(\nabla^2 u+uI)=\frac {f(x)}{u(x)} (|\nabla u|^2+u^2)^{\alpha/2}\ \ \text{on}\ \S^n.
\eeq
By Theorems \ref{thmA}-\ref{thmDa}, we have the following existence results for equation \eqref{soliton sol-u}.

\begin{theorem} \label{thmE}
Let $f$ be a smooth and positive function on the sphere $\S^n$. 
\newline
(i) If $\alpha>n+1$, there is a unique smooth, uniformly convex solution to \eqref {soliton sol-u}.
\newline
(ii) If $\alpha=n+1$ and $f$ satisfies \eqref {Aleks f cdt1}, \eqref{Aleks f cdt2},
there is a smooth, uniformly convex solution to \eqref {soliton sol-u}. The solution is unique up to dilation.
\newline
(iii) If $\alpha<n+1$ and $f$ is even,  there is an origin-symmetric solution to \eqref {soliton sol-u}.
\newline
(iv) If $f\equiv 1$, then the solution must be a sphere when $\alpha\ge n+1$,
and the origin-symmetric solution must be a sphere when $0\le \alpha<n+1$.
\end{theorem}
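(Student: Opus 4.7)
The plan is to deduce Theorem \ref{thmE} primarily from the convergence results obtained for the flow \eqref{flow}, supplementing with a direct PDE argument for uniqueness. The support function formulation \eqref{soliton sol-u} is equivalent to the soliton equation \eqref{soliton sol} via the standard identities $r^2 = u^2 + |\nabla u|^2$ and the fact that $\det(\nabla^2 u+uI)$ is the reciprocal of the Gauss curvature at the point with outer normal $x$. Consequently Theorem \ref{thmB}, Theorem \ref{thmC} and Theorem \ref{thmDa} respectively produce smooth uniformly convex solutions of \eqref{soliton sol-u} in the regimes $\alpha>n+1$, $\alpha=n+1$ (under \eqref{Aleks f cdt1} and \eqref{Aleks f cdt2}), and $\alpha<n+1$ with $f$ even. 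Origin-symmetry in (iii) is preserved by the flow because $f$ is even and the initial condition may be taken origin-symmetric, so the limit is origin-symmetric. This yields the existence parts of (i)--(iii) directly.

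For the uniqueness in (i), I would run the following maximum principle argument on \eqref{soliton sol-u}. Suppose $u_1,u_2$ are two smooth uniformly convex solutions, and let $v=u_1/u_2$ attain its maximum $c$ at some $x_0\in\S^n$. The conditions $\nabla v(x_0)=0$ and $\nabla^2 v(x_0)\le 0$ translate into $\nabla u_1(x_0)=c\,\nabla u_2(x_0)$ and $\nabla^2 u_1(x_0)+u_1(x_0)I \le c\bigl(\nabla^2 u_2(x_0)+u_2(x_0)I\bigr)$. Taking determinants and inserting into \eqref{soliton sol-u}, a short calculation using $|\nabla u_1|^2+u_1^2 = c^2(|\nabla u_2|^2+u_2^2)$ at $x_0$ reduces to the inequality $c^{\alpha-1}\le c^n$, equivalently $c^{n+1-\alpha}\ge 1$. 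For $\alpha>n+1$ the exponent is negative, forcing $c\le 1$; the symmetric argument at the minimum of $v$ gives the reverse bound, hence $u_1\equiv u_2$. For $\alpha=n+1$ the same computation only yields the trivial identity $c^0=1$, consistent with the scaling invariance $u\mapsto cu$ of the equation; the uniqueness up to dilation in (ii) therefore has to be imported from Aleksandrov's classical uniqueness theorem for the integral Gauss curvature, transported to \eqref{soliton sol-u} via the polar duality \eqref{polar dual}.

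For (iv), the constant $u\equiv 1$ visibly solves \eqref{soliton sol-u} when $f\equiv 1$. If $\alpha>n+1$ the uniqueness in (i) forces every solution to be the unit sphere; if $\alpha=n+1$ the uniqueness up to dilation in (ii) yields that any solution is a sphere centred at the origin. For the remaining range $0\le\alpha<n+1$, let $\M$ be any origin-symmetric hypersurface whose support function solves \eqref{soliton sol-u} with $f\equiv 1$. Then $\M$ is a stationary hypersurface for the normalised flow \eqref{normalised flow}, so taking $\M_0=\M$ we have $\widetilde\M_t\equiv \M$ for all $t$. By the last assertion of Theorem \ref{thmDa}, $\widetilde\M_t$ must converge smoothly to a sphere, and since $\widetilde\M_t$ is constant in $t$, $\M$ itself is a sphere centred at the origin.

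The main obstacle is the uniqueness in (ii): neither the maximum-principle comparison nor the variational characterisation via \eqref{functional} (which is invariant under $u\mapsto cu$) is sharp enough to rule out a second, non-homothetic solution, so one must invoke Aleksandrov's classical uniqueness theorem. All the other parts reduce cleanly to the earlier convergence theorems combined with the short comparison argument above.
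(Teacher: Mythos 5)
Your proposal is correct and follows essentially the same route as the paper, which obtains Theorem \ref{thmE} as a direct corollary of Theorems \ref{thmA}--\ref{thmDa}: existence from the flow convergence, uniqueness in (i) from exactly the quotient maximum-principle computation you describe (carried out in the proof of Theorem \ref{thmB} with $\log(u_1/u_2)$ in place of $u_1/u_2$, leading to the same inequality $G^{\alpha-n-1}\le 1$), and uniqueness up to dilation in (ii) from Aleksandrov's integral-Gauss-curvature argument transported by polar duality (sketched in the proof of Theorem \ref{thmC}). The only point where you genuinely diverge is (iv) in the range $0\le\alpha<n+1$: the paper proves the soliton rigidity directly, inside the proof of Theorem \ref{thmDa}, by integrating the soliton inequalities $uS_n\ge u^\alpha$ and $u^*S_n^*\ge (u^*)^\alpha$ and invoking the Blaschke--Santal\'o inequality together with $uu^*\ge 1$; you instead deduce it from the \emph{statement} of Theorem \ref{thmDa} by observing that a soliton is a stationary point of the normalised flow and hence equals its own limit. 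That deduction is legitimate, but it requires one small verification you omit: the normalised flow in Theorem \ref{thmDa} starts from $\phi_0^{-1}\M_0$ with $\phi_0$ fixed by \eqref{thmDa phi0}, so you must check that $\phi_0=1$ when $\M_0$ solves \eqref{soliton sol}; this does hold, since the change of variables \eqref{s2 t9} applied to the soliton equation gives $\int_{\S^n}r^q\,d\xi=\int_{\S^n}f\,dx$, but without it the stationarity of $\widetilde\M_t$ is not immediate. With that one line added, the argument is complete.
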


In case (ii) of Theorem \ref{thmE}, 
the existence and uniqueness (up to dilation) of the solution were proved by \cite{Aleks42},
and the regularity of the solution was obtained in \cite{Olik83,Pog73}.
In this paper we use the generalised solution to the Aleksandrov problem as a barrier
to establish the uniform estimate for the corresponding Gauss curvature flow. 
Our main concern of this paper is the smooth convergence of the flow, 
which also provides an alternative proof for the regularity of the solution.

It is interesting to compare equation \eqref{soliton sol-u} with the $L_p$-Minkowski problem
\beq\label{p-Minkow}
\det(\nabla^2 u+uI)=\frac {f(x)}{u^{1-p}(x)}  \ \ \text{on}\ \S^n.
\eeq
For equation \eqref{p-Minkow}, 
there is a solution if $p>-n-1$ \cite {ChWang06} and no solution in general if $p\le -n-1$.
In Theorem \ref{thmE} we proved the existence of solutions to \eqref{soliton sol-u} 
for all $\alpha\in\R^1$, which looks  stronger.
This is due to the associated functional \eqref{functional}, in which 
the first integral $\int_{\S^n} f\log u$ is bounded for our solution. 
This property, together with \eqref{sym Lq-r}, 
enables us to establish a uniform bound for the support  function $u$ (Lemma \ref{s3 lem1c}).

This paper is organised as follows.
In Section 2 we collect some properties of convex hypersurfaces, and show that
the flow \eqref{flow} can be reduced to a scalar parabolic equation of Monge-Amp\`ere type,
via the support function or the radial function.
We will also show in Section 2 that \eqref{normalised flow}
is a descending gradient flow of the functional \eqref{functional}.
In Section 3 we establish the uniform positive upper and lower bounds for  the support function 
of the normalised flow \eqref{normalised flow}.
The uniform positive upper and lower bounds for the principal curvatures
are proved in Section 4.
The a priori estimates ensure the longtime existence and the convergence of the normalised flow.
The proofs of Theorems \ref{thmA}-\ref{thmDa} will be presented in Section 5.
Finally in Section 6 we prove Theorem \ref{thmD}.

\vspace{2mm}
\noindent{\bf Acknowledgement} The authors would like to thank the referees
for careful reading of the manuscript and their helpful comments.
In particular, we would like to thank a referee who provided the proof for the uniqueness
in the case $f\equiv 1$ and $0\le \alpha<n+1$ in Theorem 1.4 and Theorem 1.6 (iv).
He/She also pointed out the monotonicity of the functional $\J_{n+1}$ in Lemma \ref{lem2.2}.

\section{Preliminaries}

Let us first recall  some basic properties of convex hypersurfaces.
Let $\M$ be a smooth, closed, uniformly convex hypersurface in $\R^{n+1}$.
Assume that $\M$ is parametrized by the inverse Gauss map $X:\;\S^n \to \M$.
The support function $u:\S^n \to \R$ of $\M$ is defined by
\beq\label{s2 t1}
 u(x) = \sup\{\langle x,y\rangle:\;y\in \M\}.
\eeq
The supremum is attained at a point $y$ such that $x$ is the outer normal of $\M$ at $y$.
It is easy to check that
\beq\label{s2 t2}
y = u(x) x+\nabla u(x),
\eeq
where $\nabla$ is the covariant derivative with respect to the standard metric $e_{ij}$ of the sphere $\S^n$.
Hence
\beq\label{s2 r}
 r = |y|= \sqrt{u^2 + |\nabla u|^2}.
\eeq
The second fundamental form of $\M$ is given by, see e.g. \cite{And00,Urb91},
\beq\label{s2 t3}
  h_{ij} =u_{ij} + ue_{ij},
\eeq
where $u_{ij} = \nabla^2_{ij} u$ denotes the second order covariant derivative of $u$
with respect the spherical metric $e_{ij}$.
By Weingarten's formula,
\beq\label{s2 t4}
e_{ij} =\big \langle\frac{\p \nu}{\p x_i}, \frac{\p \nu}{\p x_j}\big\rangle = h_{ik}g^{kl}h_{jl},
\eeq
where $g_{ij}$ is the metric of $\M$ and $g^{ij}$ its inverse.
It follows from \eqref{s2 t3} and \eqref{s2 t4} that
the principal radii of curvature of $\M$, under a smooth local orthonormal frame on $\S^n$,
are the eigenvalues of the matrix
\beq\label{principal radii}
  b_{ij} = u_{ij} + u\delta_{ij}.
\eeq
In particular the Gauss curvature is given by
\beq\label{s2 Gauss}
K = 1/\det(u_{ij} + u \delta_{ij}) = S_n^{-1}(u_{ij} + u \delta_{ij}),
\eeq
where
$$S_k= \sum_{i_1<\cdots<i_k} \lambda_{i_1}\cdots\lambda_{i_k}$$
denotes the $k$-th elementary symmetric polynomial.

Let $X(\cdot,t)$ be a smooth solution to the normalised flow \eqref{normalised flow}
and let $u(\cdot,t)$ be its support function.
From the above discussion we see that the flow \eqref{normalised flow} can be reduced to
the initial value problem for the support function $u$:
\beq\label{normalised flow spt}
{\left\{
\begin{split}
 \frac{\p u}{\p t}(x,t) &= - f(x) r^\alpha S_n^{-1}(u_{ij} + u \delta_{ij})(x,t) + u(x,t)
                                      \;\;\text{on}\;\S^n\times[0,\infty), \\
 u(\cdot,0) &= \widetilde u_0 := \phi^{-1}_0 u_0,
\end{split}\right.}
\eeq
where $r=\sqrt{u^2+|\nabla u|^2}(x,t)$ as in \eqref{s2 r},
$u_0$ is the support function of the initial hypersurface $\M_0$,
and $\phi_0$ is the dilation constant in  \eqref{normalised flow}.

As $\M$ encloses the origin, it can be parametrized via the radial function $r: \S^n \to \R_+$,
$$\M = \{r(\xi)\xi:\; \xi\in \S^n\}.$$
The following formulae are well-known, see e.g. \cite{Gerh14},
\beq\label{s2 t5}
\nu  = \frac{r \xi - \nabla r}{\sqrt{r^2+|\nabla r|^2}},
\eeq
\beq\label{s2 t6}
{\begin{split}
g_{ij} &= r^2 e_{ij} + r_ir_j,\\
h_{ij} &=  \frac{r^2 e_{ij} + 2r_ir_j - rr_{ij} }{\sqrt{r^2+|\nabla r|^2}}.
\end{split}}
\eeq
Set
\beq\label{def v}
v =  \frac{r}{u} =  \sqrt{1+|\nabla \log r|^2},
\eeq
where the last equality follows by multiplying $\xi$ to both sides of \eqref{s2 t5}.
The normalised flow \eqref{normalised flow} can be also described by
the following scalar equation for $r(\cdot,t)$,
\beq\label{normalised flow rad}
{\left\{
\begin{split}
 \frac{\p r}{\p t}(\xi,t) &= - v f r^\alpha K(\xi,t) +r(\xi,t) \;\;\text{on}\;\S^n\times[0,\infty), \\
 r(\cdot,0) &= \widetilde r_0 := \phi^{-1}_0 r_0,
\end{split}\right.}
\eeq
where $r_0$ is the radial function of  $\M_0$,
and $K(\xi,t)$ denotes the Gauss curvature at $r(\xi,t)\xi \in \M_t$.
Note that in \eqref{normalised flow rad} $f$ takes value at $\nu = \nu(\xi,t)$ given by \eqref{s2 t5}.
By \eqref{s2 t6} we have, under a local orthonormal frame on $\S^n$,
\beq\label{s2 t7}
K =  \frac{\det h_{ij}}{\det g_{ij}} = v^{-n-2} r^{-3n} \det (r^2\delta_{ij} + 2r_ir_j-rr_{ij}).
\eeq

Given any $\omega \subset \S^n$, let $\tC = \tC_{\M,\omega}$ be the ``cone-like" region
with the vertex at the origin and the base $\nu^{-1}(\omega) \subset \M$, namely
\beqs
\tC:=\{z\in \R^{n+1}:\; z = \lambda \nu^{-1}(x),\;\lambda\in [0,1],x\in \omega\}.
\eeqs
It is well-known that the volume element of $\tC$ can be expressed by
\beq\label{vol formula}
d\vol(\tC) =\frac{1}{n+1} \frac{u(x)}{K(p)} dx = \frac{1}{n+1} r^{n+1}(\xi) d\xi,
\eeq 
where $p = \nu^{-1}(x) \in \M$,
and $\xi$ and $x$ are associated by
\beq\label{s2 t8}
r(\xi)\xi = u(x) x + \nabla u(x),
\eeq
namely $p=\nu^{-1}(x) = \vec{\hskip1pt r}(\xi)$.
By the second equality in \eqref{vol formula}, we find that
the determinant of the Jacobian of the mapping $x \mapsto \xi = \A_\M^*(x)$ is given by
\beq\label{s2 t9}
|\Jac \A^* |= \left| \frac{d\xi}{d x} \right| = \frac{u(x)}{ r^{n+1}(\xi)K(p)}.
\eeq

\begin{lemma}\label{descending flow}
The functional \eqref{functional} is non-increasing along the normalised flow \eqref{normalised flow}.
Namely $\frac{d}{dt} \J_\alpha(\M_t) \le 0$,
and the equality holds if and only if $\M_t$ satisfies the elliptic equation \eqref{soliton sol}.
\end{lemma}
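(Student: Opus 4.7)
The plan is to differentiate $\J_\alpha(\M_t)$ directly along the normalised flow, using the support function equation \eqref{normalised flow spt} for the first integral and the radial equation \eqref{normalised flow rad} for the second, and then to recognise the result as a Cauchy--Schwarz inequality. The only nontrivial ingredient is the Jacobian identity \eqref{s2 t9}, which lets us compare integrals over $\S^n$ parametrised by the Gauss image $x$ and by the radial direction $\xi$.

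First I would compute, using $u_t = -f r^\alpha K + u$,
\beqs
\frac{d}{dt}\int_{\S^n} f(x)\log u(x,t)\,dx \;=\; -\int_{\S^n} \frac{f^2 r^\alpha K}{u}\,dx \;+\; \int_{\S^n} f\,dx.
\eeqs
For the second term of $\J_\alpha$ (taking $q\neq 0$ first), using $r_t = -v f r^\alpha K + r$,
\beqs
\frac{d}{dt}\int_{\S^n}\frac{r^q}{q}\,d\xi \;=\; -\int_{\S^n} v f\, r^{\alpha+q-1} K\, d\xi \;+\; \int_{\S^n} r^q\, d\xi .
\eeqs
Since $\alpha+q-1=n$, and since the change of variables $x\mapsto \xi$ gives $d\xi = \frac{u}{r^{n+1}K}\,dx$ by \eqref{s2 t9}, the identity $vu=r$ collapses the first integral on the right to $\int_{\S^n} f\,dx$. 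Similarly $\int_{\S^n} r^q\,d\xi = \int_{\S^n} u/(r^\alpha K)\,dx$. Combining,
\beqs
\frac{d}{dt}\J_\alpha(\M_t) \;=\; -A + 2B - C, \qquad A:=\int_{\S^n}\frac{f^2 r^\alpha K}{u}\,dx,\ \ B:=\int_{\S^n} f\,dx,\ \ C:=\int_{\S^n}\frac{u}{r^\alpha K}\,dx.
\eeqs

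The key observation is that Cauchy--Schwarz applied to $B=\int \sqrt{f^2 r^\alpha K/u}\cdot\sqrt{u/(r^\alpha K)}\,dx$ yields $B^2\le AC$, so that $2B\le 2\sqrt{AC}\le A+C$ by AM--GM, giving $\frac{d}{dt}\J_\alpha\le 0$. Equality in Cauchy--Schwarz forces $fr^\alpha K/u$ to be a constant, while equality in AM--GM forces $A=C$; these two constraints together pin the constant down to $1$, i.e.\ $u = f r^\alpha K$, which is precisely the soliton equation \eqref{soliton sol}. The case $\alpha=n+1$ (i.e.\ $q=0$) is handled identically by replacing $r^q/q$ with $\log r$; then $C$ is replaced by $o_n=\int d\xi = \int u/(r^{n+1}K)\,dx$, and the same Cauchy--Schwarz/AM--GM argument applies verbatim.

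The only real bookkeeping issue, and the step I would double-check most carefully, is the conversion between $d\xi$ and $dx$ in the computation of $\frac{d}{dt}\int r^q/q\, d\xi$: one must use $v u = r$ together with \eqref{s2 t9} and the algebraic identity $\alpha+q-1=n$ to see that the ``awkward'' factor $v f r^{\alpha+q-1} K$ reduces cleanly to $f(x)$ under change of variables. Once this reduction is in hand, the monotonicity and the identification of the equality case are immediate from Cauchy--Schwarz.
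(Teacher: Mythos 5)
Your proposal is correct and follows essentially the same route as the paper: differentiate $\J_\alpha$, use the radial evolution \eqref{normalised flow rad} together with the Jacobian identity \eqref{s2 t9} and $vu=r$ to pull both integrals back to the $x$-variable, and conclude non-positivity. The only difference is cosmetic: your expression $-A+2B-C$ is exactly the paper's $-\int_{\S^n}\frac{(fr^\alpha K-u)^2}{ur^\alpha K}\,dx$ expanded, so where you invoke Cauchy--Schwarz and AM--GM (and then chase two equality conditions to pin the proportionality constant to $1$), the paper simply completes the square, which yields the monotonicity and the equality case in one stroke.
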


\begin{proof}
For $\alpha \not = n+1$, it is easy to see
\beq\label{s2 lem1 t1}
 \frac{d}{dt} \J_\alpha(\M_t) = \int_{\S^n} f(x)\frac{u_t}{u} dx -\int_{\S^n} \frac {r_t}{r^{1-q}} d\xi.
\eeq
Let $x=x(\xi,t)=\nu(r(\xi,t)\xi)$. By \eqref{s2 t8} we have
$$\log r(\xi,t) = \log u(x,t) -\log (x\cdot\xi).$$
Differentiate the above identity and denote $\dot{x} = \p_t x(\xi,t)$.
We obtain
\beqn\label{s2 lem1 t2}
\frac{r_t}{r}(\xi,t) &=& \frac{u_t + \nabla u \cdot \dot{x}}{u} - \frac{\dot{x} \cdot \xi}{x \cdot \xi} \\
                         &=& \frac{u_t +( \nabla u-r\xi) \cdot \dot{x} }{u} \notag \\
                          &=& \frac{u_t}{u} (x,t) \notag.
\eeqn
This identity can be also seen from \eqref{normalised flow spt}, \eqref{def v} and \eqref{normalised flow rad}.
Plugging \eqref{s2 lem1 t2} in \eqref{s2 lem1 t1} and then using \eqref{s2 t9} to change the variables,
we obtain
\beqs
\frac{d}{dt} \J_\alpha(\M_t) &=& \int_{\S^n} \frac{u_t}{u}\big(f- \frac{u}{r^{n+1-q} K}\big) dx \\
                                                  &=&- \int_{\S^n} \frac{\big(f r^\alpha K- u\big)^2}{ur^\alpha K} dx \\
                                                  &\le& 0.
\eeqs
Clearly $\frac{d}{dt} \J_\alpha (\M_t) =0$ if and only if $$f(x) r^\alpha(\xi,t) K(p)-u(x,t) =0.$$
Namely $\M_t$ satisfies \eqref{soliton sol}.

When $\alpha = n+1$, we have by differentiating \eqref{functional}
\beqs
\frac{d}{dt} \J_{n+1}(\M_t) = \int_{\S^n} f(x) \frac{u_t}{u} dx - \int_{\S^n} \frac{r_t}{r} d\xi.
\eeqs
By \eqref{s2 t9} and \eqref{s2 lem1 t2}  we get
\beqs
\frac{d}{dt} \J_{n+1}(\M_t) &=& \int_{\S^n}  \frac{u_t}{u} \big( f(x) -  \Big | \frac{d\xi}{dx} \Big | \big )dx \\
                                &=& -\int_{\S^n}  \frac{( f r^{n+1}K -  u)^2}{ur^{n+1}K} dx \\
                                &\le& 0.
\eeqs
This completes the proof.
\end{proof}

The next lemma shows that the functional $\J_{n+1}$ is also monotone along the Gauss curvature flow
for origin-symmetric solutions.
This lemma is of interest itself, 
though it is not needed in the proof of our main theorems. 

\begin{lemma}\label{lem2.2}
Let $\M_t$ be a family of smooth, closed, uniformly convex and origin-symmetric hypersurfaces
which evolve under the normalised Gauss curvature flow \eqref{normalised flow}
with $\alpha=0$ and $f\equiv 1$.
Assume $\vol(\M_0)=\vol(B_1)$.
Then $\frac{d}{dt}\J_{n+1}(\M_t)\le 0$,
and the equality holds if and only if $\M_t$ is the unit sphere.
\end{lemma}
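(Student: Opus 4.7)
The plan is to mimic the derivation of Lemma \ref{descending flow}, with the flow equation now $u_t = u - K$ (since $\alpha = 0$ and $f \equiv 1$). The identity $r_t/r|_\xi = u_t/u|_x$ from that proof rests only on $r\xi = ux + \nabla u$ together with $|x|^2 = 1$, so it is independent of the particular flow and continues to apply. Combined with the Jacobian change $d\xi = (u/(r^{n+1}K))\,dx$, it should yield
\[ \frac{d}{dt}\J_{n+1}(\M_t) = \int_{\S^n}\frac{u_t}{u}\Bigl(1 - \frac{u}{r^{n+1}K}\Bigr)dx = \int_{\S^n}\Bigl(\frac{1}{r^{n+1}} - \frac{K}{u}\Bigr)dx. \]
The volume hypothesis is preserved by the flow: from the normal speed $u - K$ one gets $\frac{d}{dt}\vol(\M_t) = \int(u - K)\,dA = (n+1)\vol(\M_t) - o_n$, using $\int K\,dA = o_n$ and $\int u\,dA = (n+1)\vol(\M_t)$. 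Since $\vol(\M_0) = \vol(B_1) = o_n/(n+1)$, this forces $\vol(\M_t) \equiv \vol(B_1)$ and hence $\int K\,dA = \int (u/r^{n+1})\,dA = o_n$ for all $t$.

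The proof then reduces to the integral inequality $\int_{\S^n} K/u\,dx \ge \int_{\S^n} dx/r^{n+1}$. The same change of variables rewrites the right-hand side as $\int K/u\,d\xi$, so in terms of the area measure on $\M_t$ the two sides become $\int K^2/u\,dA$ (Gauss-map pullback) and $\int K/r^{n+1}\,dA$ (radial-map pullback), both underlying measures having total mass $o_n$. My starting point is the pointwise factorisation
\[ \Bigl(\frac{K}{u} - \frac{1}{r^{n+1}}\Bigr)\Bigl(K - \frac{u}{r^{n+1}}\Bigr) = u\Bigl(\frac{K}{u} - \frac{1}{r^{n+1}}\Bigr)^2 \ge 0, \]
which, integrated against $dA$ and simplified by the two mass identities, gives the Cauchy--Schwarz-type relation $I_1 - 2I_2 + I_3 \ge 0$, where $I_1 = \int K^2/u\,dA$, $I_2 = \int K/r^{n+1}\,dA$ and $I_3 = \int u/r^{2(n+1)}\,dA$. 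Combined with the classical Cauchy--Schwarz bound $I_2^2 \le I_1 I_3$ (from the decomposition $1/r^{n+1} = (K/u)^{1/2}\cdot(u/(Kr^{2(n+1)}))^{1/2}$), this reduces the desired $I_1 \ge I_2$ to the auxiliary bound $I_1 \ge I_3$.

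The hardest step will be establishing this auxiliary inequality $I_1 \ge I_3$, and here I expect the origin-symmetry and the volume normalisation to enter essentially. A second-variation analysis about the unit sphere (writing $u = 1 + \varepsilon\phi + O(\varepsilon^2)$ and expanding $\phi$ in spherical harmonics on $\S^n$ with eigenvalues $\lambda_k = k(k+n-1)$ of $-\Delta$) shows that the only mode producing a negative contribution to $\J_{n+1}$ is $k=1$, where $\lambda_1 = n < n+1$; this mode corresponds precisely to translating $\M_t$ off centre and is excluded by the origin-symmetry hypothesis. Globally, I would attempt to derive $I_1 \ge I_3$ from the Blaschke--Santal\'o inequality $\vol(\Omega)\vol(\Omega^*) \le \vol(B_1)^2$ for origin-symmetric bodies (which is sharp on ellipsoids and exploits central symmetry in an essential way), or from a log-Minkowski-type inequality for symmetric bodies. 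Equality in the pointwise factorisation above forces $Kr^{n+1} = u$; combined with the volume constraint and origin-symmetry this pins down $\M_t = \S^n$, which gives the desired rigidity.
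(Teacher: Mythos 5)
Your setup is sound and coincides with the paper's opening moves: the formula $\frac{d}{dt}\J_{n+1}(\M_t)=\int_{\S^n}\bigl(r^{-(n+1)}-K/u\bigr)\,dx$ and the conservation of $\vol(\Omega_t)$ (equivalently $\int_{\S^n}\frac{u}{K}\,dx\equiv o_n$) are both correct. The genuine gap is the ``auxiliary bound'' $I_1\ge I_3$, which you explicitly leave unproven and which carries the whole weight of the argument: without it the chain $I_2\le\sqrt{I_1I_3}\le I_1$ collapses, and the quadratic inequality $I_1-2I_2+I_3\ge0$ together with $I_2^2\le I_1I_3$ alone cannot give $I_1\ge I_2$ (take $I_1=1$, $I_2=10$, $I_3=100$). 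Moreover the tools you suggest do not naturally deliver $I_1\ge I_3$: the second-variation computation is only local near the sphere, and Blaschke--Santal\'o bounds $\vol(\Omega_t^*)$ from \emph{above}, whereas $I_3=\int_{\S^n}r^{-(n+1)}(\xi)\,d\xi=\int_{\S^n}(u^*)^{n+1}(\xi)\,d\xi\ \ge\ \int_{\S^n}(r^*)^{n+1}(\xi)\,d\xi=(n+1)\vol(\Omega_t^*)$, since the support function of $\Omega_t^*$ dominates its radial function. So Blaschke--Santal\'o controls a quantity sitting \emph{below} $I_3$, which is the wrong direction for the upper bound on $I_3$ that you need.

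The paper sidesteps this by comparing the two integrals to the constant $o_n=\int_{\S^n}dx$ \emph{separately}, rather than to each other. Cauchy--Schwarz, $o_n^2\le\bigl(\int_{\S^n}\frac{K}{u}\,dx\bigr)\bigl(\int_{\S^n}\frac{u}{K}\,dx\bigr)$, combined with the conserved volume gives $\int_{\S^n}\frac{K}{u}\,dx\ge o_n$. For the other term, $r\ge u$ at corresponding points gives $\int_{\S^n}r^{-(n+1)}\,dx\le\int_{\S^n}u^{-(n+1)}\,dx=(n+1)\vol(\Omega_t^*)$, and it is \emph{here} that Blaschke--Santal\'o applies (the integral over normal directions $x$ is genuinely the volume of the polar body, unlike your $I_3$), yielding $\le o_n$. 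To repair your proposal you should replace the reduction to $I_1\ge I_3$ by these two one-sided comparisons; the equality analysis then also changes (equality forces equality in Cauchy--Schwarz, in $r=u$, and in Blaschke--Santal\'o, which identifies the unit sphere), rather than resting on the pointwise factorisation you wrote down.
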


\begin{proof}
Let $\Omega_t$ denote the convex body whose boundary is $\M_t$.
Note that the functional $\mathcal J_{n+1}$ is unchanged under dilation.
The volume $\vol(\Omega_t)$ is preserved under the normalised Gauss curvature flow
\begin{equation}\label{0}
\partial_t u = - K +u,
\end{equation}
where $u(\cdot,t)$ is the support function of $\M_t$.
This can be easily seen from the following evolution equation
\beqs
\frac{d}{dt}\vol(\Omega_t) &=& \frac{1}{n+1}\frac{d}{dt} \int_{\S^n}\frac{u}{K}dx \\
&=&\int_{\S^n} \frac{u_t}{K} dx \\
&=&(n+1)\big(-\vol(\Omega_0)+\vol(\Omega_t)\big).
\eeqs
Hence
\begin{equation}\label{1}
\int_{\mathbb S^n} \frac{u}{K} dx =|\mathbb S^n|, \ \ \ \forall \ t\ge0.
\end{equation}
By the H\"older inequality
$$\Big(\int_{\mathbb S^n} dx\Big)^2 \le\Big( \int_{\mathbb S^n} \frac{K}{u}dx\Big)
\Big(\int_{\mathbb S^n} \frac{u}{K}dx\Big).$$
This together with \eqref{1} shows
\begin{equation}\label{2}
\int_{\mathbb S^n}\frac{K}{u}dx \ge \int_{\mathbb S^n} dx, \ \ \ \forall \ t\ge 0 .
\end{equation}
Recall that Blaschke-Santol\'o inequality of origin-symmetric convex body gives
\beq\label{sb inequality}
\vol(\Omega_t)\vol(\Omega^*_t)\le \vol^2(B_1),
\eeq
where $\Omega^*$ is the polar dual of $\Omega$. Therefore
$$\Big(\int_{\mathbb S^n} dx\Big)^2\ge \int_{\mathbb S^n}\frac{u}{K}dx\int_{\mathbb S^n} (r^*)^{n+1}d\xi^*
= \int_{\mathbb S^n}\frac{u}{K}dx\int_{\mathbb S^n} \frac{1}{u^{n+1}}dx.$$
This together with \eqref{1} implies that
\begin{equation}\label{3}
\int_{\mathbb S^n} \frac{1}{r^{n+1}}dx\le \int_{\mathbb S^n} \frac{1}{u^{n+1}}dx \le  \int_{\mathbb S^n} dx.
\end{equation}
Combining \eqref{2} and \eqref{3}, we conclude that, under the flow \eqref{0},
\beqs
\frac{d}{dt} \J_{n+1} &=& \int_{\S^n}  \frac{u_t}{u} dx - \int_{\S^n} \frac{r_t}{r} d\xi \\
&=& -\int_{\S^n}  \frac{K}{u} dx + \int_{\S^n} \frac{K}{u} \Big|\frac{d\xi}{dx}\Big| dx \\
&=&-\int_{\mathbb S^n} \frac{K}{u} dx + \int_{\mathbb S^n} \frac{1}{r^{n+1}} dx\\
&\le&0.
\eeqs
The last equality holds if and only if the equality in \eqref{2} and in \eqref{3} holds, by \eqref{sb inequality}
it occurs  when $\M_t=\S^n$ only.
\end{proof}

\section{A priori estimates I}

In this section we establish the uniform positive upper and lower bounds
for the support function of the normalised flow \eqref{normalised flow}.

\begin{lemma}\label{s3 lem1}
Let $u(\cdot,t)$, $t\in (0,T]$,  be a smooth, uniformly convex solution to \eqref{normalised flow spt}.
If $\alpha > n+1$, then there is a positive constant $C$ depending only on $\alpha$,
and the lower and upper bounds of $f$ and $u(\cdot,0)$ such that
\beq\label{s3 lem1 1}
1/C \le u(\cdot,t)\le C\ \ \  \forall\ t\in (0,T].
\eeq
If $\alpha = n+1$ and $f \equiv 1$, then
\beq\label{s3 lem1 2}
\min_{\S^n} u(\cdot,0) \le u(\cdot,t) \le \max_{\S^n} u(\cdot,0)\ \ \  \forall\ t\in (0,T].
\eeq
\end{lemma}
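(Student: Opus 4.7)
The plan is to apply the parabolic maximum principle directly to the scalar PDE \eqref{normalised flow spt}, exploiting the fact that at a spatial extremum of $u(\cdot,t)$ the Monge-Amp\`ere term $S_n^{-1}(u_{ij}+u\delta_{ij})$ collapses to a pure power of $u$.

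First I would analyse $u$ at a point $x_*\in\S^n$ realising $u_{\max}(t):=\max_{\S^n}u(\cdot,t)$. There $\nabla u(x_*,t)=0$, so \eqref{s2 r} gives $r(x_*,t)=u(x_*,t)$, while $\nabla^2 u(x_*,t)\le 0$ combined with the uniform convexity condition $u_{ij}+u\delta_{ij}>0$ confines every eigenvalue of $u_{ij}+u\delta_{ij}$ to the interval $(0,u(x_*,t)]$. Hence $\det(u_{ij}+u\delta_{ij})\le u^{n}$, i.e.\ $S_n^{-1}\ge u^{-n}$, and substituting into \eqref{normalised flow spt} produces the pointwise differential inequality
\begin{equation*}
\frac{d}{dt}u_{\max}(t)\le u_{\max}\bigl(1-f(x_*)\,u_{\max}^{\alpha-n-1}\bigr).
\end{equation*}
The parallel computation at a minimum point, where $\nabla^2 u\ge 0$ and hence $S_n^{-1}\le u^{-n}$, reverses every inequality and yields
\begin{equation*}
\frac{d}{dt}u_{\min}(t)\ge u_{\min}\bigl(1-f(x^\star)\,u_{\min}^{\alpha-n-1}\bigr).
\end{equation*}

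Setting $\beta:=\alpha-n-1$, I would close the argument by a scalar ODE comparison. When $\alpha>n+1$ one has $\beta>0$, and the right-hand sides above are strictly negative as soon as $u_{\max}>(\min_{\S^n}f)^{-1/\beta}$ and strictly positive as soon as $u_{\min}<(\max_{\S^n}f)^{-1/\beta}$; comparison with the ODE $\phi'=\phi(1-c\phi^{\beta})$ then yields
\begin{equation*}
u_{\max}(t)\le\max\bigl(u_{\max}(0),\,(\min\nolimits_{\S^n}f)^{-1/\beta}\bigr),\qquad u_{\min}(t)\ge\min\bigl(u_{\min}(0),\,(\max\nolimits_{\S^n}f)^{-1/\beta}\bigr),
\end{equation*}
which is \eqref{s3 lem1 1}. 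When $\alpha=n+1$ and $f\equiv 1$ we have $\beta=0$, so the factor $1-f\,u^{\beta}$ vanishes identically; hence $u_{\max}$ is non-increasing and $u_{\min}$ is non-decreasing in $t$, giving the sharp bound \eqref{s3 lem1 2}.

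I do not expect any serious obstacle: once the two extremum identities are in hand, the argument is purely algebraic. The only mild technicality is the standard Hamilton-type justification that $t\mapsto u_{\max}(t)$ and $t\mapsto u_{\min}(t)$ are Lipschitz in $t$ and that their a.e.\ derivatives are controlled by $\partial_t u$ at a realising extremum; since $u$ is assumed smooth in $(x,t)$ this is classical and causes no difficulty.
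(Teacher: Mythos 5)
Your proposal is correct and follows essentially the same route as the paper: the paper also evaluates \eqref{normalised flow spt} at the spatial minimum (and, "similarly", the maximum), uses $\nabla u=0$ so that $r=u$ and $S_n^{-1}(u_{ij}+u\delta_{ij})\lessgtr u^{-n}$ there, and closes with the resulting ODE inequality $\frac{d}{dt}u_{\min}\ge -(fu_{\min}^{-q}-1)u_{\min}$ with $q=n+1-\alpha$. The constants you obtain, $(\min f)^{-1/\beta}=(\min f)^{1/q}$ and $(\max f)^{1/q}$, coincide with those in the paper.
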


\begin{proof}
Let $u_{\min}(t) = \min_{x\in\S^n} u(x,t)$.
By \eqref{normalised flow spt} we have
\beq\label{s3 lem1 t1}
\frac{d}{dt} u_{\min} \ge -(f u^{-q}_{\min}-1)u_{\min}.
\eeq
where $q = n+1-\alpha \le 0$.

If $\alpha >n+1$, we may assume that $ u_{\min}(t) < (\max_{\S^n} f)^{\frac1q} $,
otherwise we are through. Hence $\frac{d}{dt} u_{\min} \ge 0$.
This implies
$$u(\cdot,t) \ge \min \big\{(\max_{\S^n} f)^{\frac1q},\min_{\S^n} u(\cdot,0)\big\}.$$
 Similarly we have
$$u(\cdot,t) \le \max\big\{(\min_{\S^n}f)^{\frac1q}, \max_{\S^n} u(\cdot,0)\big\}.$$
This proves \eqref{s3 lem1 1}

If $\alpha = n+1$ and $f \equiv 1$, then \eqref{s3 lem1 t1} gives $\frac{d}{dt} u_{\min} \ge 0$.
Similarly we have $\frac{d}{dt} u_{\max} \le 0$.
Therefore \eqref{s3 lem1 2} follows.
\end{proof}

When $\alpha = n+1$, for general positive function $f$ which
satisfies \eqref{Aleks f cdt1} and \eqref{Aleks f cdt2},
we can use a barrier argument to derive the $L^\infty$-norm estimates.

\begin{lemma}\label{s3 lem1b}
Let $u$ be as in Lemma \ref{s3 lem1}. 
If $\alpha = n+1$, and $f$ satisfies \eqref{Aleks f cdt1} and \eqref{Aleks f cdt2},
then there is a positive constant $C$ depending only on $\min_{\S^n}u(\cdot,0)$, $\max_{\S^n}u(\cdot,0)$
and $f$ such that
\beq\label{s3 lem1b est}
1/C\le u(\cdot,t) \le C \ \ \forall \ t\in (0,T].
\eeq
\end{lemma}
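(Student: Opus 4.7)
The plan is to construct a time-independent barrier from the soliton equation and compare $u(\cdot,t)$ against it via a maximum principle for the ratio. By Aleksandrov's existence theorem \cite{Aleks42}, conditions \eqref{Aleks f cdt1}--\eqref{Aleks f cdt2} produce a convex body with integral Gauss curvature $f\,dx$; since $f$ is smooth and positive, the regularity theory of Pogorelov \cite{Pog73} and Oliker \cite{Olik83} upgrades this to a smooth, uniformly convex hypersurface. Passing to the polar dual gives a smooth, strictly positive support function $\bar u\in C^\infty(\S^n)$ that solves the soliton equation \eqref{soliton sol} with $\alpha=n+1$; equivalently, $f\bar r^{\,n+1}\bar K=\bar u$ on $\S^n$, where $\bar r,\bar K$ denote the radial function and Gauss curvature of the dual hypersurface.

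With this stationary barrier fixed, I would set $w(x,t):=u(x,t)/\bar u(x)$ and track its spatial extrema. At a point $x_0$ where $w(\cdot,t)$ attains its maximum, the conditions $\nabla w=0$ and $\nabla^2 w\le 0$ translate, via $u=w\bar u$, into $\nabla u=w\nabla\bar u$ and the matrix inequality $u_{ij}+u\delta_{ij}\le w(\bar u_{ij}+\bar u\delta_{ij})$. The first identity gives $r^2=u^2+|\nabla u|^2=w^2\bar r^{\,2}$, while the second, after taking determinants of positive matrices and using \eqref{s2 Gauss}, yields $K\ge \bar K/w^n$. Substituting into \eqref{normalised flow spt} with $\alpha=n+1$ and invoking the soliton identity $f\bar r^{\,n+1}\bar K=\bar u$,
\[
u_t(x_0,t)\ \le\ -f\,w^{n+1}\bar r^{\,n+1}\bigl(\bar K/w^n\bigr)+w\bar u\ =\ -w\bar u+w\bar u\ =\ 0,
\]
so $\max_{\S^n}w(\cdot,t)$ is non-increasing in $t$. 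The symmetric calculation at a minimum point shows $\min_{\S^n}w(\cdot,t)$ is non-decreasing. Together these pin $w$ between its initial extrema for all $t$; because $\bar u$ is smooth and strictly positive on $\S^n$, this translates into \eqref{s3 lem1b est} with $C$ depending only on $\min u(\cdot,0)$, $\max u(\cdot,0)$, and $f$.

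The main obstacle is not the maximum principle computation, which proceeds essentially automatically from the $(n+1)$-homogeneity of the right-hand side when $\alpha=n+1$ (it is precisely this homogeneity that makes $c\bar u$ stationary for every $c>0$ and that powers the comparison). Rather, the delicate input is the existence of a smooth, uniformly convex barrier $\bar u$: Aleksandrov's original construction only delivers a generalised solution, so one must appeal to Pogorelov--Oliker regularity to secure strict convexity and $C^\infty$ smoothness under smooth positive data. Conditions \eqref{Aleks f cdt1}--\eqref{Aleks f cdt2} enter precisely at this stationary existence step, and without them no such barrier is available.
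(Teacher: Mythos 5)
Your argument is correct, but it takes a genuinely different route from the paper. You first invoke the Pogorelov--Oliker regularity theory to upgrade Aleksandrov's generalised solution to a smooth, uniformly convex barrier $\bar u$ solving \eqref{soliton sol-u} with $\alpha=n+1$, and then run a clean pointwise maximum principle on the ratio $w=u/\bar u$: at a spatial maximum $\nabla u=w\nabla\bar u$ gives $r=w\bar r$, the Hessian condition gives $u_{ij}+u\delta_{ij}\le w(\bar u_{ij}+\bar u\delta_{ij})$ hence $K\ge w^{-n}\bar K$, and the $(n+1)$-homogeneity plus the soliton identity force $\p_t w\le 0$; all steps check out (with Hamilton's trick to differentiate the extremum in $t$), and the resulting constant depends only on $f$ and the initial extrema as claimed. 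The paper instead works directly with the \emph{generalised} Aleksandrov solution $\N$: it notes that every dilate $s\N$ is a stationary solution of \eqref{normalised flow} in the weak sense, sandwiches $\M_0$ between $\N_0=s_0\N$ and $\N_1=s_1\N$, and rules out a first touching time by perturbing the outer barrier and applying the comparison principle for generalised solutions of the elliptic Monge--Amp\`ere equation on the contact set. Your version buys a shorter, more transparent computation and even monotonicity of both extrema of $u/\bar u$; the paper's version buys independence from the regularity theory. That independence matters for the paper's stated secondary aim: it wants the flow to furnish an \emph{alternative} proof of the regularity of the Aleksandrov solution, and using Pogorelov--Oliker as input to the a priori estimates would make that application circular. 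For the lemma in isolation, however, your proof is sound.
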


Before proving Lemma \ref{s3 lem1b}, 
we recall the existence of generalised solutions to Aleksandrov's problem,
of which the proof consists of two steps. The first one is to prove
the existence of polyhedron $\N^*_k$ whose integral Gauss curvature
is a discrete measure converging weakly to $f$. 
Noting that  the integral Gauss curvature is invariant under dilation, 
one may assume that the diameter of $\N^*_k$ is equal to 1. 
Hence by convexity $\N^*_k$ converges to a limit $\N^*$. 
In the second step one uses condition  \eqref{Aleks f cdt2} to show 
that $\N^*$ contains nonempty interior and the origin is an interior point. 
The proof of the second step is also elementary, see page 520, lines 17-27,  \cite{Pog73}.

\begin{proof} [Proof of Lemma \ref{s3 lem1b}]
Let $\N$ be the polar dual of the generalised solution $\N^*$, defined in \eqref{polar dual}.
We use $\N$ as barrier to prove \eqref{s3 lem1b est}.
Let $\M_t$ be a smooth convex solution to the normalised flow \eqref{normalised flow}.
Let $\N_0 = s_0 \N$ and $\N_1 = s_1 \N$, where the constants  $s_1>s_0>0$
are chosen such that $\N_0$ is strictly contained in $\M_0$ and $\M_0$ is strictly contained in $\N_1$.
Let $r_t, \rho_0, \rho_1$ be respectively the radial functions of $\M_t, \N_0, \N_1$.
Note that for any constant $s>0$, 
$s\N$ is a stationary solution to \eqref{normalised flow} in the generalised sense.

We claim that $\M_t$ is contained in $\N_1$ for all $t>0$. 
For if not, there exists a time $t_0>0$ such that  $\sup_{\xi\in \S^n} r_{t_0}(\xi)/\rho_1(\xi)=1$.  
Denote $G=\M_{t_0}\cap\N_1$ ($G$ can be a point or a closed set). 
Since $\frac{\p}{\p t} r_t(\xi)$ is smooth in both $\xi$ and $t$, 
replacing $\rho_1$ by $(1+a)\rho_1$ for a small constant $a$, 
we may assume that the velocity of $\M_t$ is positive on $G\times \{t_0\}$, 
and so also in a neighbourhood of $G\times \{t_0\}$. 
Therefore there exist sufficiently small constants $\eps, \delta>0$, 
such that the velocity of $\M_t$ is greater than $\delta$ at $\xi r_{t_0}(\xi)\in \M_{t_0}$, for all $\xi\in\omega$, 
where $\omega=\{\xi \in\S^n:\ r_{t_0}(\xi)>(1-\eps) \rho_1(\xi)\}$.
By equation \eqref{normalised flow rad}, it means the Gauss curvature of $\M_{t_0}$ 
is strictly smaller than that of $\N_1$  for all $\xi\in\omega$. 
Applying the comparison principle for generalised solutions of the elliptic Monge-Amp\`ere equation
to the functions $r_{t_0}$ and $(1-\eps) \rho_1$,  we reach a contradiction. 

Similarly we can prove that $\N_0$ is contained in $\M_t$  for all $t>0$.
\end{proof}

For $\alpha <n+1$,
we consider the origin-symmetric hypersurfaces
and give the following $L^\infty$-norm estimates.

\begin{lemma}\label{s3 lem1c}
Let $\M_t$, where $t\in (0,T]$, be an origin-symmetric, uniformly convex solution
to the normalised flow \eqref{normalised flow},
and $u(\cdot,t)$ be its support function.
For $ \alpha<n+1$,
there is a positive constant $C$ depending only on $\alpha$, $\M_0$ and $f$, such that
\beq\label{s3 lem1c est}
1/C\le u(\cdot,t) \le C \ \ \forall \ t\in (0,T].
\eeq
\end{lemma}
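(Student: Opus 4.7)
The strategy is to combine the monotonicity of the functional $\J_\alpha$ from Lemma \ref{descending flow} with a conservation law
$$\int_{\S^n}r^q(\xi,t)\,d\xi\equiv A:=\int_{\S^n}f(x)\,dx,\qquad q=n+1-\alpha>0,$$
that is built into the choice \eqref{thmDa phi0} of $\phi_0$; origin-symmetry will then convert an integral bound on $\int f\log u$ into pointwise control of $u$.

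To obtain the conservation law, I differentiate along the normalised flow \eqref{normalised flow rad}:
$$\frac{d}{dt}\int_{\S^n}r^q\,d\xi=-q\int_{\S^n}vfr^{q+\alpha-1}K\,d\xi+q\int_{\S^n}r^q\,d\xi.$$
Because $q+\alpha-1=n$, $v=r/u$, and \eqref{s2 t9} gives $d\xi=(u/(r^{n+1}K))\,dx$, the first integrand collapses to $f\,dx$. The right-hand side therefore equals $q\bigl(\int r^q\,d\xi-\int f\,dx\bigr)$, which vanishes at $t=0$ by \eqref{thmDa phi0} and hence for all $t\ge0$ by the resulting linear ODE.

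For the upper bound on $u$, Lemma \ref{descending flow} together with the conservation just established gives $\int_{\S^n}f\log u(\cdot,t)\,dx\le\J_\alpha(\M_0)+A/q=:C_1$ uniformly in $t$. Pick $\xi_*=\xi_*(t)$ with $r(\xi_*,t)=r_{\max}(t)$. By origin-symmetry both $\pm r_{\max}(t)\xi_*$ lie on $\M_t$, so $u(x,t)\ge r_{\max}(t)|\langle x,\xi_*\rangle|$ for every $x\in\S^n$. Taking logarithms and integrating against $f$, and using that $\int_{\S^n}|\log|\langle x,\xi_*\rangle||\,dx$ is rotationally invariant and hence independent of $\xi_*$, I deduce $\log r_{\max}(t)\le C$. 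Since $u_{\max}=r_{\max}$ for any convex body containing the origin, this yields the uniform upper bound $u(\cdot,t)\le C$.

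For the lower bound, the conservation of $\int r^q$ together with the upper bound gives a uniform lower bound on the volume of the convex body bounded by $\M_t$: when $0<q\le n+1$, H\"older gives $A\le|\S^n|^{1-q/(n+1)}\bigl(\int r^{n+1}\,d\xi\bigr)^{q/(n+1)}$; when $q>n+1$, the estimate $r^q\le r_{\max}^{q-n-1}r^{n+1}$ with $r_{\max}\le C$ suffices. Either way $\int r^{n+1}\,d\xi\ge c>0$. Now if $u(x_*,t)=u_{\min}(t)$, origin-symmetry confines the body to the slab $\{|\langle y,x_*\rangle|\le u_{\min}(t)\}$, and the upper bound confines it to a ball of radius $C$, so its volume is bounded above by $C'u_{\min}(t)$ for a constant $C'$ depending on $n$ and $C$. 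Hence $u_{\min}(t)\ge c/C'>0$. The main difficulty is recognising the Jacobian cancellation $vfr^n K\,d\xi=f\,dx$ underlying the conservation law; the evenness of $f$ enters only to ensure that $\M_t$ stays origin-symmetric along the flow.
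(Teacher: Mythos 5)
Your proposal is correct and follows essentially the same route as the paper: the same conservation law $\int_{\S^n}r^q\,d\xi\equiv\int_{\S^n}f$ derived from \eqref{normalised flow rad} and the Jacobian identity \eqref{s2 t9}, the same use of Lemma \ref{descending flow} plus origin-symmetry (via $u(x,t)\ge r_{\max}(t)|\langle x,\xi_*\rangle|$) for the upper bound, and the same two-case H\"older/volume-comparison argument for the lower bound. The only cosmetic difference is that you confine the body to a slab intersected with a ball where the paper uses a box; both give $\vol(\Omega_t)\le C\,r_{\max}^n\,u_{\min}$ and hence the same conclusion.
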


\begin{proof}

Let us denote by $\mathcal I_q(\M_t)$ the $L^q$ integral of the radial function $r(\xi,t)$, i.e.,
\beqs
\I_q(\M_t)= \int_{\S^n} r^q (\xi,t)d\xi,
\eeqs
where $q=n+1-\alpha$.
By \eqref{normalised flow rad}, we have
\beqs
\frac{d}{dt} \I_q (\M_t)
&=& q \int_{\S^n} r^{q-1} \Big(-\frac{r}{u} f(x) r^\alpha K +r \Big) d\xi \\
&=& -q \int_{\S^n} f(x) \frac{r^{n+1} K}{u} d\xi +q\int_{\S^n} r^q d\xi,
\eeqs
where $f$ takes value at $x=\nu(\xi,t)$ given by \eqref{s2 t5}.
By the variable change formula \eqref{s2 t9}, we obtain
\beqs
\frac{d}{dt} \I_q(\M_t)= q \Big( -\int_{\S^n} f(x) dx + \I_q(\M_t) \Big).
\eeqs
Solving this ODE, one sees
\beq
\I_q(\M_t)=  e^{qt} \Big(\I_q(\M_0) - \int_{\S^n} f \Big) +\int_{\S^n} f
\eeq
It follows that, by our choice of the rescaling factor $\phi_0$ in \eqref{thmDa phi0},
\beq\label{sym Lq-r}
\I_q(\M_t) \equiv \int_{\S^n}f(x)dx, \ \ \forall \ t\in(0,T].
\eeq

Let $r_{\min}(t) =\min_{\S^n} r(\cdot,t)$ and $r_{\max}(t)=\max_{\S^n} r(\cdot,t)$.
By a rotation of coordinates we may assume that $r_{\max}(t) = r(e_1,t)$.
Since $\Omega_t$ is origin-symmetric,
the points $\pm r_{\max}(t) e_1\in \M_t$.
Hence
$$u(x,t) =\sup\{p\cdot x: \ p\in \M_t\}\ge r_{\max}(t) |x\cdot e_1|, \ \forall \ x\in \S^n.$$
Therefore
\beqn\label{sym lem3 t1}
\int_{\S^n} f(x) \log u(x,t)  dx
&\ge&\Big( \int_{\S^n} f(x)dx\Big) \log r_{\max}(t) +\int_{\S^n} f(x) \log |x\cdot e_1| dx \notag\\
&\ge& |\S^n|(\min_{\S^n} f ) \log r_{\max}(t) - C \max_{\S^n} f.
\eeqn
By Lemma \ref{descending flow} and \eqref{sym Lq-r}, we conclude
\beqs
\J_{\alpha}(\M_0)\ge \J_{\alpha}(\M_t)
=\int_{\S^n}f(x)\log u(x,t) dx -\frac1q \int_{\S^n} f.
\eeqs
This together with \eqref{sym lem3 t1} implies
\beq\label{sym ubd}
r_{\max}(t) \le C_1 e^{C_2 \J_\alpha(\M_0)} \le C.
\eeq
This proves the upper bound in \eqref{s3 lem1c est}.

Next we derive a positive lower bound for $u(\cdot,t)$.
We divide it into two cases.

Case (i), $q \in(0,n+1]$. By H\"older inequality,
$$\mathcal I_q (\M_t) \le \mathcal I_{n+1}^{\frac{q}{n+1}}(\M_t) |\S^n|^{\frac{\alpha}{n+1}}. $$
Hence
\beq\label{sym lem2 t1}
\frac{|\S^n|^{-\frac{\alpha}{q}}}{n+1}\mathcal I_q^{\frac{n+1}{q}}(\M_t)
\le \frac{1}{n+1} \mathcal I_{n+1}(\M_t) 
= \vol(\Omega_t),
\eeq
where $\Omega_t$ denotes the convex body enclosed by $\M_t$.
Assume by a rotation if necessary $r(e_{n+1},t) = r_{\min}(t)$.
Since $\Omega_t$ is origin-symmetric, we find that $\Omega_t$ is contained in a cube
$$Q_t=\{z\in \R^{n+1}: \ 
 -r_{\max}(t) \le z_i\le r_{\max}(t) \ \text{for} \ 1\le i \le n,
 \ -r_{\min}(t)\le z_{n+1}\le r_{\min}(t) \}.$$
Therefore by \eqref{sym lem2 t1}
\beqs
\frac{|\S^n|^{-\frac{\alpha}{q}}}{n+1}\mathcal I_q^{\frac{n+1}{q}}(\M_t)
\le 2^{n+1}r^n_{\max}(t) r_{\min}(t).
\eeqs
By \eqref{sym Lq-r}, the left hand side of the above inequality is a positive constant.
Using \eqref{sym ubd}, we get $\min_{\S^n} u (\cdot,t) = r_{\min}(t) \ge 1/C$.

Case (ii), $q> n+1$. We have
\beqs
\I_q(\M_t) &=& r_{\max}^q(t) \int_{\S^n} \Big(\frac{r(\xi,t)}{r_{\max}(t)}\Big)^q d\xi\\
&\le&  r_{\max}^q(t) \int_{\S^n} \Big(\frac{r(\xi,t)}{r_{\max}(t)}\Big)^{n+1} d\xi \\
&=&(n+1) r_{\max}^{q-n-1}(t)\vol(\Omega_t) \\
&\le& C r^{q-1}_{\max}(t) r_{\min}(t).
\eeqs
The lower bound of $r_{\min}(t)$ now follows from \eqref{sym Lq-r} and \eqref{sym ubd}.
\end{proof}

For convex hypersurface, the gradient estimate is a direct consequence of the $L^\infty$-norm estimate. 

\begin{lemma}\label{s3 lem2}
Let $u(\cdot,t)$, $t\in(0,T]$, be a smooth, uniformly convex solution to \eqref{normalised flow spt}.
Then we have the gradient estimate
\beq\label {3.8} 
|\nabla u (\cdot,t)| \le \max_{\S^n\times (0,T]} u, \;\;\forall \; t\in (0,T].
\eeq
\end{lemma}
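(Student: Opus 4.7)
The plan is to combine two standard facts about convex hypersurfaces: the identity \eqref{s2 r}, which decomposes the squared radial distance as $r^2 = u^2 + |\nabla u|^2$, and the elementary inequality $r(\xi,t) \le u(\xi,t)$ that follows directly from the definition of the support function whenever the origin is interior to $\M_t$. From \eqref{s2 r}, at each $x \in \S^n$ and $t \in (0,T]$ one has
\[
|\nabla u(x,t)|^2 \;=\; r(\xi,t)^2 - u(x,t)^2 \;\le\; r(\xi,t)^2,
\]
where $\xi \in \S^n$ is the radial direction of the boundary point $y = u(x,t)x + \nabla u(x,t) \in \M_t$ produced by \eqref{s2 t2}, so that $|y| = r(\xi,t)$.

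Next I would bound $r$ by $u$: since the positive lower bounds of Lemmas~\ref{s3 lem1}--\ref{s3 lem1c} keep the origin in the interior of $\M_t$, the point $r(\xi,t)\xi$ belongs to $\M_t$. Testing the supremum in the definition \eqref{s2 t1} of the support function against this particular point gives
\[
u(\xi,t) \;=\; \sup_{z \in \M_t}\langle \xi, z\rangle \;\ge\; \langle \xi,\, r(\xi,t)\xi\rangle \;=\; r(\xi,t).
\]
Chaining the two inequalities then yields
\[
|\nabla u(x,t)| \;\le\; r(\xi,t) \;\le\; u(\xi,t) \;\le\; \max_{\S^n \times (0,T]} u,
\]
which is exactly \eqref{3.8}. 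There is really no obstacle here: the estimate is purely kinematic, and the only substantive hypothesis beyond smoothness and convexity is that the origin remains in the interior of $\M_t$ on $(0,T]$, which was already established earlier in Section~3.
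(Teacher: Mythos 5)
Your argument is correct and is precisely the "due to convexity" reasoning the paper leaves implicit: $|\nabla u(x,t)|^2 = r^2 - u^2 \le r^2$ by \eqref{s2 r}, and $r(\xi,t)\le u(\xi,t)\le \max_{\S^n\times(0,T]}u$ by testing the supremum in \eqref{s2 t1} at the boundary point $r(\xi,t)\xi$. Nothing to add.
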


\begin{proof}
This is due to convexity.
\end{proof}

Similarly we have the estimates for the radial function $r$. 

\begin{lemma}\label{s3 lem3}
Let $X(\cdot,t)$, $t\in (0,T]$, be a uniformly  convex solution to \eqref{normalised flow}.
Let $u$ and $r$ be its support function and radial function, respectively.
Then
\beq\label {3.9} 
\min_{\S^n\times(0,T]} u \le r(\cdot,t)\le\max_{\S^n\times(0,T]} u\ \ \forall \ t\in (0,T], 
\eeq
and
\beq\label {3.10} 
|\nabla r(\cdot,t)| \le C \ \ \forall \ t\in(0,T],
\eeq
where  $C>0$  depends only on
$\min_{\S^n\times(0,T]} u$ and $ \max_{\S^n\times(0,T]} u$ 
\end{lemma}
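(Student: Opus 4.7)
The plan is to read both estimates directly off the support/radial duality, using only convex geometry and the identities already collected in Section 2; no PDE input is required beyond the smooth uniform convexity of the solution.

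For the two-sided bound \eqref{3.9}, let $\Omega_t$ denote the convex body bounded by $\M_t$, so that $\Omega_t=\{z\in\R^{n+1}:\; z\cdot x\le u(x,t)\ \forall\,x\in\S^n\}$. Since $r(\xi,t)\xi\in\partial\Omega_t$, taking the supporting inequality with $x=\xi$ yields
$$r(\xi,t)\;=\;r(\xi,t)\,\xi\cdot\xi\;\le\;u(\xi,t)\;\le\;\max_{\S^n\times(0,T]} u.$$
For the lower bound, put $m:=\min_{\S^n\times(0,T]} u$; every $z$ with $|z|\le m$ satisfies $z\cdot x\le|z|\le m\le u(x,t)$ for all $x\in\S^n$, so $B_m(0)\subset\Omega_t$ for every $t$, and hence $r(\xi,t)\ge m$ for every $\xi$.

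For the gradient bound \eqref{3.10}, I would exploit the two expressions for $v$ recorded in \eqref{def v}, namely $v=r/u=\sqrt{1+|\nabla\log r|^2}$. Squaring and rearranging gives
$$|\nabla r(\xi,t)|^2 \;=\; r^2(v^2-1) \;=\; \frac{r^2(r^2-u^2)}{u^2}$$
at the corresponding pair $(\xi,x)=(\xi,x(\xi,t))$ encoded by \eqref{s2 t8}. Combining \eqref{3.9} with $r\ge u$ at these corresponding points (which follows from $v\ge 1$) majorises the right-hand side by $(\max u)^4/(\min u)^2$. Because $x\mapsto \xi(x,t)$ is a diffeomorphism of $\S^n$ for each fixed $t$ (as $\M_t$ is smooth and uniformly convex), this bound transfers to every $\xi\in\S^n$.

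I do not anticipate any real obstacle: both parts are essentially immediate from convexity and the support/radial duality. The only thing to keep straight is the bookkeeping of the \emph{corresponding points} in \eqref{3.10}, so that $u$ is evaluated at the outer-normal direction of the boundary point $r(\xi,t)\xi$ when the identities from Section 2 are invoked.
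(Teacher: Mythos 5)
Your proposal is correct and follows essentially the same route as the paper: the two-sided bound \eqref{3.9} comes from the standard support/radial comparison ($\min u=\min r$, $\max u=\max r$, of which you prove exactly the two inequalities needed), and the gradient bound comes from the identity $v=r/u=\sqrt{1+|\nabla\log r|^2}$ in \eqref{def v}, which the paper uses in the equivalent form $|\nabla r|\le r^2/u$. Your extra care about evaluating $u$ at the corresponding normal direction is harmless here since only the global extrema of $u$ enter the final bound.
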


\begin{proof} Estimates \eqref{3.9} follow from \eqref{s3 lem1b est} as one has
$\min_{\S^n} u(\cdot,t) = \min_{\S^n} r(\cdot,t)$ and $\max_{\S^n} u(\cdot,t) = \max_{\S^n} r(\cdot,t)$.
Estimate \eqref{3.10} follows from \eqref{3.9} because  by \eqref{def v} we have
$|\nabla r| \le \frac{r^2}{u}.$
\end{proof}

\section{A priori estimates II}

In this section we establish uniform positive upper and lower bounds for the principal curvatures
for the normalised flow \eqref{normalised flow}.
We point out that the curvature estimates in this section are for any $\alpha\in\R^1$.

We first derive an upper bound for the Gauss curvature $K(\cdot,t)$.
\begin{lemma}\label{s3 lem4}
Let $X(\cdot, t)$ be a uniformly convex solution to the normalised flow
\eqref{normalised flow} which encloses the origin for $t\in (0,T]$.
Then there is a positive constant $C$ depending only on
$\alpha$, $ f $, $\min_{\S^n\times(0,T]} u$ and $ \max_{\S^n\times(0,T]} u$, such that
\beq\label{ubd-K}
K(\cdot,t) \le C,\;\;\forall \; t\in (0,T].
\eeq
\end{lemma}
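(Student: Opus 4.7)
The plan is to adapt Tso's auxiliary-function technique to the normalised flow. By the support-function bounds of Section~3 I may fix a constant $c_0>0$ such that $u(\cdot,t)-c_0\ge c_0$ on $\S^n\times[0,T]$. Set $\Psi:=fr^\alpha K$ (the speed of the unnormalised flow) and introduce
\[
 W(x,t):=\frac{\Psi(x,t)}{u(x,t)-c_0}.
\]
Since $f$, $r$ and $u-c_0$ are bounded above and below by positive constants, a uniform upper bound on $W$ immediately yields \eqref{ubd-K}.

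Let $(x_0,t_0)$ be a point where $W$ achieves its maximum on $\S^n\times[0,T]$. If $t_0=0$ the bound is trivial, so I assume $t_0>0$. The first-order condition $\nabla W=0$ gives $\nabla\Psi=\tfrac{\Psi}{u-c_0}\nabla u$, and after using this identity the second-order condition $\nabla^2W\le 0$ turns into the matrix inequality $\Psi_{ij}\le W\,u_{ij}$. To compute $\Psi_t/\Psi$ I use $K=1/\det b_{ij}$ with $b_{ij}=u_{ij}+u\delta_{ij}$, so that $K_t/K=-b^{ij}(b_{ij})_t$ and $(b_{ij})_t=(u_t)_{ij}+u_t\,\delta_{ij}$. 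The normalised equation $u_t=-\Psi+u$ gives $(b_{ij})_t=-\Psi_{ij}-\Psi\delta_{ij}+b_{ij}$, whence
\[
 \frac{K_t}{K}=b^{ij}\Psi_{ij}+\Psi\,b^{kk}-n,
\]
where $b^{kk}$ denotes the trace of the inverse of $(b_{ij})$. Contracting $\Psi_{ij}\le W u_{ij}$ with $b^{ij}$ and using $\Psi=W(u-c_0)$ collapses this to
\[
 \frac{K_t}{K}\;\le\;nW-c_0 W\,b^{kk}-n.
\]

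For the remaining contribution $\alpha r_t/r$, I differentiate $r^2=u^2+|\nabla u|^2$ and use $u_t=-\Psi+u$ together with $\nabla\Psi=\tfrac{\Psi}{u-c_0}\nabla u$ to obtain
\[
 \frac{r_t}{r}=1-W\Bigl(1-\frac{c_0 u}{r^2}\Bigr).
\]
Because $r\ge u\ge 2c_0$, the bracketed quantity lies in $[1/2,1]$, so $|\alpha r_t/r|\le C(1+W)$ for a constant $C$ depending only on $\alpha$ and the $L^\infty$ bounds of Lemmas~\ref{s3 lem1}--\ref{s3 lem3}. The condition $W_t\ge 0$ at $(x_0,t_0)$ reduces, via $\log W=\log\Psi-\log(u-c_0)$ and $u_t=-\Psi+u$, to $\Psi_t/\Psi+W-u/(u-c_0)\ge 0$. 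Plugging in the above estimates produces, at the maximum,
\[
 c_0 W\,b^{kk}\;\le\;C_1 W+C_2,
\]
with $C_1,C_2$ depending only on $\alpha$, $f$ and the $L^\infty$ bounds. Dividing by $W$ (we may assume $W$ is large, else we are done) yields $b^{kk}\le C$, and the AM--GM inequality $b^{kk}\ge n(\det b^{-1})^{1/n}=nK^{1/n}$ then forces $K(x_0,t_0)\le C'$; hence $W(x_0,t_0)\le C''$, proving \eqref{ubd-K}.

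The main obstacle is controlling the extra terms generated by the factor $r^\alpha$, which is absent in Tso's original $K$-flow. Because $r=\sqrt{u^2+|\nabla u|^2}$ depends on both $u$ and $\nabla u$, both $r_t$ and $\nabla r$ inject new gradient and time-derivative pieces into $\Psi_t$. The saving observation is that at a critical point of $W$ the relation $\nabla\Psi\parallel\nabla u$ forces these contributions to be \emph{linear} in $W$, so they can be absorbed into the term $C_1W$ and cannot spoil the dominant good term $-c_0W\,b^{kk}$, which grows like $W^{1+1/n}$ once AM--GM is invoked.
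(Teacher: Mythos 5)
Your proof is correct and follows essentially the same route as the paper: both are Tso-type arguments applying the maximum principle to the speed divided by $u$ minus a small constant, with the good term $-c_0 W\,b^{kk}$ (the paper's $-\eps_0 H Q$) combined with $H\ge nK^{1/n}$, and with the extra $r^\alpha$ contribution controlled via the first-order critical-point relation exactly as in the paper's computation of $r_t$. The only cosmetic differences are that you use $W=\Psi/(u-c_0)$ rather than $Q=(\Psi-u)/(u-\eps_0)=-u_t/(u-\eps_0)$ and conclude at a space-time maximum instead of via the ODE inequality for the evolving spatial maximum.
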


\begin{proof}
Consider the auxiliary function
$$Q = \frac{-u_t}{u - \eps_0} = \frac{fr^\alpha K - u}{u-\eps_0},$$
where $$\eps_0 = \frac12 \min_{x\in \S^n,\;t\in (0,T]} u (x,t) >0.$$
At the point where  $Q$ attains its spatial maximum, we have
\beq\label{s3 lem4 t1}
 0 = \nabla_i Q = \frac{-u_{ti}}{u-\eps_0} +\frac{u_t u_i}{(u-\eps_0)^2},
\eeq
and
\beqn\label{s3 lem4 t2}
 0 \ge \nabla^2_{ij} Q &=& \frac{-u_{tij}}{u-\eps_0} +\frac{u_{ti} u_j +u_{tj} u_i +u_t u_{ij}}{(u-\eps_0)^2}
                                          -\frac{2u_t u_i u_j}{(u-\eps_0)^3}  \notag \\
                                   &=&   \frac{-u_{tij}}{u-\eps_0}     +\frac{u_t u_{ij}}{(u-\eps_0)^2} , 
\eeqn
where \eqref{s3 lem4 t1} was used in the second equality above.
The first inequality in \eqref{s3 lem4 t2} should be understood in the sense of negative-semidefinite matrix.
By \eqref{s3 lem4 t2} and \eqref{principal radii} we infer that
\beq\label{s3 lem4 t3}
-u_{tij}-u_t\delta_{ij} \le (b_{ij} - \eps_0\delta_{ij})Q.
\eeq

Using the equation \eqref{normalised flow spt}, we then have
\beqn\label{s3 lem4 t4}
Q_t &=& \frac{-u_{tt}}{u-\eps_0}  + Q^2  \notag \\
      &=& \frac{fr^\alpha S_n^{-2}}{u-\eps_0}S_n^{ij}(-u_{tij}-u_t\delta_{ij})
             + \frac{\alpha f r^{\alpha-1}}{(u-\eps_0)S_n} r_t + Q + Q^2 \notag \\
      &\le& \frac{fr^\alpha K}{u-\eps_0}(n-\eps_0 H)Q + \frac{\alpha f r^{\alpha-1}K}{u-\eps_0} r_t + Q + Q^2,    
\eeqn
where $H$ denotes the mean curvature of $X(\cdot,t)$.

By \eqref{s2 r} and \eqref{s3 lem4 t1},
\beq\label{s3 lem4 t5}
r_t = \frac{uu_t+\sum u_k u_{kt}}{r} = \frac{\eps_0 u -r^2}{r} Q.
\eeq
Without loss of generality we assume  that $K \approx Q \gg 1$.
Plugging \eqref{s3 lem4 t5}  into \eqref{s3 lem4 t4} and noticing that 
$H \ge n K^{\frac{1}{n}}$, we obtain
\beqs
Q_t \le C_0 Q^2\big(C_1 - \eps_0Q^{\frac1n}\big),
\eeqs
for some $C_0,C_1$ only depending on $\alpha,f$ and the $L^\infty$-norm of $u$.
From the  ode we infer that $Q\le C$ for some $C>0$ depending on $Q(0)$, $C_1$ and $\eps_0$.
Our a priori bound \eqref{ubd-K} follows consequently.
\end{proof}


Next we prove that the principal curvatures of $\M_t$ are bounded by positive constants from both above and below.
To obtain the positive lower bound for the principal curvatures of $\M_t$,
we will study an expanding flow by Gauss curvature for the dual hypersurface of $\M_t$.
This technique was previously used in \cite{BIS,IS13, Ivak15, Ivak16}. 
Expanding flows by Gauss curvature have been studied in \cite{Gerh90, Gerh14, Sch06, Urb90,Urb91}.
Our estimates are also inspired by these works.

\begin{lemma}\label{s3 lem5}
Let $X(\cdot, t)$ be the solution of the normalised flow \eqref{normalised flow} for $t\in (0,T]$.
Then there is a positive constant $C$ depending only on $\alpha$,
$f$, $\min_{\S^n\times(0,T]} u$ and $ \max_{\S^n\times(0,T]} u$, such that
the  principal curvatures of $X(\cdot,t)$ are bounded from above and below
\beq\label{bd-kappa}
C^{-1} \le \kappa_i(\cdot,t) \le C,\;\;\forall \; t\in(0,T], \ \text{and} \ i =1,\ldots,n.
\eeq
\end{lemma}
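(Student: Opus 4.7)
The plan is to reduce \eqref{bd-kappa} to a one-sided bound on the principal radii $b_i(x,t)$, the eigenvalues of $b_{ij}=u_{ij}+u\delta_{ij}$. Since $\kappa_i=1/b_i$ and Lemma \ref{s3 lem4} already gives $K=\prod_i \kappa_i\le C$, i.e.\ $\prod_i b_i\ge 1/C$, a uniform upper bound $b_i\le M$ automatically forces $b_i\ge 1/(CM^{n-1})$. Hence both directions of \eqref{bd-kappa} follow once the largest eigenvalue of $b_{ij}$ is controlled from above.

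To bound $\lambda_{\max}(b_{ij})$ I would follow the duality approach indicated before the lemma and pass to the polar dual $\M_t^*$. Its support function is $u^*=1/r$ and its radial function is $r^*=1/u$, so by the a priori estimates of Section 3 the quantities $u^*,r^*$ and their gradients all lie in bounded intervals away from zero and infinity. Differentiating $u^*=1/r$ in $t$ and using \eqref{normalised flow rad}, one derives an evolution equation for $u^*$ that takes the shape of an \emph{expanding} anisotropic Gauss curvature flow for $\M_t^*$, of Monge--Amp\`ere type with smooth, uniformly bounded coefficients. The eigenvalues $b_i^*$ of $\nabla^2 u^* + u^* I$ are related to the $b_i$ by duality, with $b_i^*$ at the dual point $\xi^*$ controlling $1/b_i$ at the corresponding primal point up to factors bounded by the $L^\infty$ bounds on $u,r$. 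Therefore an \emph{upper} bound on $b_i^*$---the direction that is natural for an expanding flow---yields the desired upper bound on $b_i$.

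To produce the upper bound on $\lambda_{\max}(b_{ij}^*)$ I would apply the Urbas--Gerhardt maximum-principle method to an auxiliary function of the form
\[
W = \log\lambda_{\max}(b_{ij}^*) - A\log u^* + B\bigl(|\nabla u^*|^2+(u^*)^2\bigr),
\]
with constants $A,B>0$ chosen later. At a spatial maximum of $W$ the Gerhardt trick (replacing $\lambda_{\max}$ by $\eta^i\eta^j b_{ij}^*$ for a fixed unit vector $\eta$ realising the maximum) together with the evolution equation of $b_{ij}^*$ should produce, once $\lambda_{\max}$ is very large, a dominant negative term of order $\lambda_{\max}^{1/n}$ coming from $\det(b_{ij}^*)^{1/n}$, hence a parabolic inequality $\partial_t W \le C_1 - C_2\lambda_{\max}^{1/n}$. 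An ODE comparison then bounds $W$ uniformly in $t$, and the reduction in the first paragraph completes the proof.

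The main obstacle is the book-keeping when transferring the anisotropic weight $f(\nu) r^\alpha$ to the dual side: on $\M_t^*$ the outer normal $\nu$ becomes the dual radial direction, so $f$ turns into a function of $u^*$ and $\nabla u^*$, while $r^\alpha=(u^*)^{-\alpha}$ interacts with the normalisation term from \eqref{normalised flow}. The constants $A,B$ in $W$ must be tuned so that the gradient and zeroth-order error terms generated by these factors are absorbed by the good $\det(b_{ij}^*)^{1/n}$ contribution. The detour through the dual flow is forced because, on the primal contracting side, $K=1/\det(b_{ij})$ enters the evolution of $b_{ij}$ with the ``wrong'' sign, and a direct maximum-principle attempt on $\lambda_{\max}(b_{ij})$ is not tractable.
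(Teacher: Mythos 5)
Your overall strategy---pass to the polar dual $\M_t^*$, view it as an expanding flow, and run a Gerhardt--Urbas maximum principle on the logarithm of an extremal eigenvalue plus lower-order terms in $u^*$ and $r^*$---is the same as the paper's, and your first-paragraph reduction (an upper bound on the primal radii $b_i$ together with $K\le C$ yields the two-sided bound \eqref{bd-kappa}) is correct. But there is a direction reversal in the duality step that breaks the argument. As you yourself state, the eigenvalues $b_i^*$ of $\nabla^2u^*+u^*I$ are comparable to $1/b_i=\kappa_i$ up to factors controlled by the $L^\infty$ bounds on $u$ and $r$ (for a sphere of radius $R$ one has $b_i^*=1/R=1/b_i$ exactly). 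Hence an upper bound on $\lambda_{\max}(b^*_{ij})$ gives an upper bound on $\kappa_i$, i.e.\ a \emph{lower} bound on $b_i$---not the upper bound on $b_i$ that your reduction requires. The quantity that must be bounded from above is the largest principal \emph{curvature} of $\M_t^*$, i.e.\ the largest eigenvalue of $(b^*_{ij})^{-1}$; this is exactly what the paper does, taking $w=\log h_*^{\tau\tau}-\beta\log u^*+\tfrac{A}{2}(r^*)^2$ with $(h_*^{ij})=(b^*_{ij})^{-1}$. Your $W$ targets $\lambda_{\max}(b^*_{ij})$ instead, so even if the maximum-principle computation went through it would only yield $\kappa_i\le C$ and never the non-degeneracy bound $\kappa_i\ge 1/C$, which is the hard half of \eqref{bd-kappa}. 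The gap cannot be patched by the determinant relation \eqref{s3 e2} either: turning $b_i^*\le C$ into $\lambda_{\min}(b^*)\ge 1/C$ would require $\det(b^*)\ge 1/C$, i.e.\ a lower bound on the primal speed $K$, which is not available at this stage (Lemma \ref{s3 lem4} gives only $K\le C$).

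A secondary issue: the mechanism you invoke for the parabolic inequality, a dominant term $-C_2\lambda_{\max}^{1/n}$ produced by $\det(b^*)^{1/n}$, is the mechanism of the speed estimate (Lemma \ref{s3 lem4}), not of the eigenvalue bound. In the paper's computation the good term is $-\beta H^*$, generated by the $-\beta\log u^*$ part of the test function through the linearized operator $\sum h_*^{ij}\nabla^2_{ij}$; it dominates the error terms $Ch_*^{11}+C\beta+CA$ once $\beta$ is large, with $A$ chosen large relative to $\beta$ to absorb the $(1+\beta)/(u^*_t+u^*)$ term. If you redo the computation for $\log$ of the largest eigenvalue of $(b^*_{ij})^{-1}$ with this structure, the argument closes.
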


\begin{proof}
To prove the lower bound in \eqref{bd-kappa},
we employ the dual flow of \eqref{normalised flow},
and establish an upper bound of principal curvature for the dual flow.
This, together with Lemma \ref{s3 lem4},
also implies the upper bound in \eqref{bd-kappa}.

We denote by $\M_t^*$ the polar set of $\M_t = X(\S^n,t)$,
see \eqref{polar dual} for the definition of the polar set.
It is well-known that
\beq\label{s3 e1}
 r(\xi,t)= \frac{1}{u^*(\xi,t)},
\eeq
where $u^*(\cdot,t)$ denotes the support function of $\M_t^*$.
Hence by \eqref{s2 t7}, we obtain the following relation
\beq\label{s3 e2}
\frac{u^{n+2}(x,t)(u^*(\xi,t))^{n+2}}{K(p)K^*(p^*)} = 1,
\eeq
where $p\in \M_t$, $p^*\in \M^*_t$ are the two points satisfying $p\cdot p^* = 1$,
and $x,\xi$ are respectively the unit outer normals of $\M_t$ and $\M^*_t$ at $p$ and $p^*$.
Therefore by equation \eqref{normalised flow rad} we obtain the equation for $u^*$, 
\beq\label{s3 e3}
\p_t u^*(\xi,t) = \frac{(u^*(\xi,t))^{n+3-\alpha}f}{(r^*)^{n+1}K^*} - u^*(\xi,t), \ \xi\in \S^n, \ t\in (0,T],
\eeq
where $$K^* = S_n^{-1}(\nabla^2 u^* + u^* I)(\xi,t)$$
is the Gauss curvature of $\M_t^*$ at the point $p^*=\nabla u^*(\xi,t) + u^*(\xi,t) \xi$, and
$$r^* = |p^*|=\sqrt{|\nabla u^*|^2+(u^*)^2}(\xi,t)$$
is the distance from $p^*$ to the origin.
Note that $f$ takes value at $$x = p^*/|p^*| =\frac{\nabla u^* + u^* \xi}{\sqrt{|\nabla u^*|^2+(u^*)^2}} \in \S^n.$$

By \eqref{s3 e1}, $1/C\le u^* \le C$ and $|\nabla u^*| \le C$
for some $C$ only depending on $\max_{\S^n\times(0,T]}u$, $\min_{\S^n\times(0,T]}u$.

Let $b_{ij}^* = u^*_{ij} +u^* \delta_{ij}$, and $\h^{ij}$ be the inverse matrix of $b^*_{ij}$.
As discussed in Section 2, the eigenvalues of $\b_{ij}$ and $\h^{ij}$ are respectively
the principal radii and principal curvatures of $\M^*_t$.
Consider the function
\beq\label{s3 e4}
w= w (\xi,t,\tau)=\log \h^{\tau\tau} -\beta \log u^* + \frac{A}{2}(\r)^2,
\eeq
where $\tau$ is a unit vector in the tangential space of $\S^n$,
while $\beta$ and $A=A(\beta)$ are large constants to be specified later on.
Assume $w$ attains its maximum at $(\xi_0,t_0)$, along the direction $\tau =e_1$.
By a rotation, we also assume $\h^{ij}$ and $\b_{ij}$ are diagonal at this point.

It is direct to see, at the point where $w$ attains its maximum,
\beqn\label{s3 e5}
0\le \p_t w &=&\b_{11} \p_t \h^{11} -\beta\frac{\u_t}{\u} + A\r \r_t \notag \\
                  &=& -\h^{11}\p_t \b_{11} -\beta\frac{\u_t}{\u} + A \r \r_t,
\eeqn
\beqn\label{s3 e6}
0 &=& \nabla_i w = -\h^{11}\nabla_i\b_{11} -\beta\frac{\u_i}{\u} +A\r \r_{i} \notag \\
   &=& - \h^{11} \u_{i11} - \h^{11}\u_1\delta_{1i}  -\beta\frac{\u_i}{\u} +A\r \r_{i},
\eeqn
where $u^*_{ijk}=\nabla_k u^*_{ij}$ throughout this paper.
We also have 
\beqn\label{s3 e7}
0 \ge \nabla^2_{ij} w&=&-\h^{11}\nabla^2_{ij}\b_{11} +2 \h^{11}  \sum \h^{kk} \nabla_1\b_{ik}\nabla_1\b_{kj} \\
                                  &&  -(\h^{11})^2 \nabla_i \b_{11}\nabla_j \b_{11} 
                             -\beta\frac{\u_{ij}}{\u} +\beta\frac{\u_i\u_j}{(\u)^2}   \notag \\
                                 && +A(\r \r_{ij} + \r_i\r_j),  \notag
\eeqn
where the first inequality means that $\nabla_{ij} w$ is a negative-semidefinite matrix.
Note that $\nabla_k \b_{ij}$ is symmetric in all indices.

The equation \eqref{s3 e3} can be written as
\beq\label{s3 e8}
\log(\u_t+\u) - \log S_n = \log\Big(\frac{(\u)^{n+3-\alpha}}{(r^*)^{n+1}}f\Big)=: \psi(\xi,\u,\nabla \u).
\eeq
Differentiating \eqref{s3 e8} gives
\beqn\label{s3 e9}
\frac{\u_{tk} + \u_k}{\u_t+\u}
&=& \sum \h^{ij}\nabla_k\b_{ij} + \nabla_k\psi  \notag \\ 
&=&\sum  \h^{ij}\u_{kij}+ \sum \h^{ij}\u_j\delta_{ik} + \nabla_k \psi,             
\eeqn
and
\beqn\label{s3 e10}
\frac{\u_{t11} + \u_{11}}{\u_t+\u} - \frac{(\u_{t1}+\u_1)^2}{(\u_t+\u)^2}
&=& \sum \h^{ij}\nabla^2_{11}\b_{ij} - \sum \h^{ii}\h^{jj}(\nabla_1 \b_{ij})^2 + \nabla^2_{11}\psi .    
\eeqn

Dividing \eqref{s3 e5} by $\u_t+\u$ and using \eqref{s3 e10}, we have
\beqn\label{s3 e11}
0 &\le& -\h^{11}\Big(\frac{\u_{11t}+\u_{11}}{\u_t + \u} -\frac{\b_{11}}{\u_t + \u} +1\Big) 
            -\frac{\beta\u_t}{\u(\u_t+\u)} + \frac{A\r \r_t}{\u_t+\u} \notag \\
   &=&  -\h^{11}  \frac{\u_{11t}+\u_{11}}{\u_t + \u}  -\h^{11}+ \frac{1+\beta}{u^*_t+u^*}
            -\frac{\beta}{\u}+\frac{A \r \r_t}{\u_t+\u} \notag \\
   &\le&-\h^{11}\sum \h^{ij}\nabla^2_{11} \b_{ij} + \h^{11}\sum \h^{ii}\h^{jj}(\nabla_1\b_{ij})^2 \\
           && -\h^{11}\nabla^2_{11}\psi 
           + \frac{1+\beta}{u^*_t+u^*} +\frac{A \r \r_t}{\u_t+\u}. \notag
\eeqn
By the Ricci identity, we have
\beqs
\nabla^2_{11}\b_{ij} = \nabla^2_{ij}\b_{11} -\delta_{ij}\b_{11} + \delta_{11}\b_{ij}-\delta_{i1}\b_{1j}+\delta_{1j}\b_{1i}.
\eeqs
Plugging this identity in \eqref{s3 e11} and employing \eqref{s3 e7}, we obtain
\beqn\label{s3 e12}
0&\le& \h^{11}\sum\Big( \h^{11} \h^{ii}(\nabla_i \b_{11})^2 - \h^{ii}\h^{jj}(\nabla_1\b_{ij})^2\Big) 
        +(H^*-n\h^{11})  \notag  \\
      &&  -\beta H^* +C\beta -\beta\sum \h^{ij} \frac{\u_i\u_j}{(\u)^2} -\h^{11}\nabla_{11}^2\psi \notag \\
      && +\frac{1+\beta}{\u_t +\u} +\frac{A\r \r_t}{\u_t+\u} - A\sum\h^{ij}(\r \r_{ij} +\r_i\r_j)  \notag  \\
  &\le& -\beta H^* +C\beta  -\h^{11}\nabla_{11}^2\psi \\
     &&+\frac{1+\beta}{\u_t +\u} +\frac{A\r \r_t}{\u_t+\u} - A\sum\h^{ij}(\r \r_{ij} +\r_i\r_j), \notag 
\eeqn
where $H^* = \sum \h^{ii}$ is the mean curvature of $\M_t^*$.

It is direct to calculate
\beqs
\r_t = \frac{\u\u_t + \sum \u_k\u_{kt}}{\r},
\eeqs
\beq\label{s3 e15}
\r_i = \frac{\u\u_i + \sum \u_k\u_{ki}}{\r} = \frac{\u_i\b_{ii}}{\r},
\eeq
and
\beqs
\r_{ij} = \frac{\u\u_{ij} + \u_i\u_j+ \sum \u_k\u_{kij} +\sum \u_{ki}\u_{kj}}{\r} - \frac{\u_i\u_j\b_{ii}\b_{jj}}{(\r)^3}.
\eeqs
Hence, by \eqref{s3 e9},
\beqs
\frac{\r \r_t}{\u_t+\u} - \sum\h^{ij}(\r \r_{ij} +\r_i\r_j)
&=&\frac{\u\u_t}{\u_t+\u} - \u \sum \h^{ij}\u_{ij}  \notag \\
     &&- \sum\h^{ii}(\u_{ii})^2 -\frac{|\nabla \u|^2}{\u_t+\u} +\sum \u_k \nabla_k \psi . \notag 
\eeqs
Since
\beqs
\frac{\u \u_t}{\u_t+\u} - \frac{|\nabla \u|^2}{\u_t + \u} = \u - \frac{(\r)^2}{\u_t +\u},
\eeqs
and
\beqs
-\u\sum \h^{ij} \u_{ij} - \sum \h^{ii}(\u_{ii})^2
&=&-\u\sum \h^{ii} (\b_{ii} -\u\delta_{ii}) -\sum \h^{ii} (\b_{ii} -u\delta_{ii})^2 \\
&=& n\u-\sum \b_{ii},
\eeqs
we further deduce
\beqn\label{s3 e13}
\frac{\r \r_t}{\u_t+\u} - \sum\h^{ij}(\r \r_{ij} +\r_i\r_j)
&\le & C  - \frac{(\r)^2}{\u_t+\u} + \sum \u_k \nabla_k \psi.
\eeqn
Plugging \eqref{s3 e13} in \eqref{s3 e12}, we get
\beqn\label{s3 e14}
0 &\le&
-\beta H^* +C\beta+CA-\h^{11}\nabla^2_{11}\psi+\frac{1+\beta-A(\r)^2}{\u_t + \u}
 +A \sum\u_k\nabla_k\psi \notag \\
&\le& -\beta H^* +C\beta +CA -\h^{11}\nabla^2_{11}\psi +A\sum\u_k\nabla_k\psi,
\eeqn
provided $A > 2(1+\beta)/\min_{\S^n\times(0,T]}(\r)^2 \ge C(1+\beta)$,
for some $C>0$ only depending on $\max_{\S^n\times(0,T]}u$.

By \eqref{s3 e6} and \eqref{s3 e15}, we have
\beqs
-\h^{11}\nabla^2_{11}\psi + A\sum \u_k\nabla_k \psi
&\le& C \h^{11}( 1+ (\u_{11})^2)+CA \\
       &&-\h^{11} \sum\psi_{\u_k} \u_{k11} + A \sum \psi_{\u_k}\u_k\u_{kk} \\
 &\le& C\h^{11}+ C/\h^{11} + CA+ C\beta.     
\eeqs
Hence \eqref{s3 e14} can be further estimated as
\beqs
0 &\le& -\beta H^* +C\h^{11} +C\beta+CA \\
    &\le& -\frac12\beta \h^{11} +C\beta+CA,
\eeqs
by choosing $\beta$ large.
This inequality tells us the principal curvature of $\M^*$ are bounded from above,
namely
$$\max_{\xi\in \S^n} \kappa_i^*(\xi,t)\le C, \ \ \forall \ t\in (0,T] \ \text{and} \ i=1,\ldots,n.$$
By Lemma \ref{s3 lem4} and \eqref{s3 e2}, we have $K^*(\cdot,t) \ge 1/C$.
Therefore
$$1/C\le  \kappa_i^*(\cdot,t) \le C, \ \ \forall \ t\in (0,T] \ \text{and} \ i=1,\ldots,n.$$
By duality, \eqref{bd-kappa} follows.
\end{proof}

As a consequence of the above a priori estimates, 
one sees that the convexity of the hypersurface $\M_t$ is preserved under the flow \eqref{normalised flow}
and  the solution $X(\cdot, t)$  is uniformly convex.

By  estimates \eqref{bd-kappa},  equation \eqref{normalised flow spt} is uniformly parabolic.
By the $L^\infty$-norm estimates and gradient estimates in Lemmas \ref {s3 lem1}--\ref {s3 lem2},
one obtains the H\"older continuity of $\nabla^2 u$ and $u_t$ by Krylov's theory \cite{Kry87}.
Estimates for higher derivatives then follows from the standard regularity theory of uniformly parabolic equations.
Hence we obtain the long time existence and regularity of solutions 
for the normalised flow \eqref{normalised flow}. 
The uniqueness of smooth solutions to \eqref{normalised flow spt} 
follows from the comparison principle, see Lemma \ref{lem5.1} below.
We obtain the following theorem.

\begin{theorem}\label{long time existence}
Let $\M_0$ be a smooth, closed, uniformly convex hypersurface in $\R^{n+1}$ which encloses the origin.
Let $f$ be a positive smooth function on $\S^n$.
Then the normalised flow \eqref{normalised flow} has a unique smooth,
uniformly convex solution $\M_t$ for all time,
if one of the following is satisfied
\begin{itemize}
\item[(i)]  $\alpha >n+1$;
\item[(ii)] $ \alpha =n+1$, and $f$ satisfies \eqref{Aleks f cdt1}, \eqref{Aleks f cdt2};
\item[(iii)] $\alpha <n+1$, and $\M_t$ is origin-symmetric as long as the flow exists.
\end{itemize}
Moreover we have the a priori estimates
\beq
\|u\|_{C_{x,t}^{k,m}\big(\S^n\times[0,\infty) \big)} \le C_{k,m},
\eeq
where $C_{k,m}>0$ depends only on $k,m,f, \alpha$ and the geometry of $\M_0$.
\end{theorem}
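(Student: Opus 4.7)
The plan is to establish the theorem by combining short-time existence with the a priori estimates already proved in Sections 3 and 4, and then using Krylov's regularity theory to bootstrap to smooth convergence. Short-time existence and uniqueness of a uniformly convex solution $\M_t$ to \eqref{normalised flow spt} on some maximal interval $[0, T^*)$ is standard, because \eqref{normalised flow spt} is a parabolic Monge-Amp\`ere equation on $\S^n$ that is strictly parabolic as long as $b_{ij} = u_{ij}+u\delta_{ij}$ stays positive definite and $u$ stays positive. The goal is then to show $T^* = \infty$ and to derive uniform $C^{k,m}$ bounds.

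The core of the argument is a continuation method driven by the case-dependent $L^\infty$ estimate. In case (i), $\alpha > n+1$, Lemma \ref{s3 lem1} provides $1/C \le u(\cdot,t) \le C$ on $[0,T^*)$ depending only on $\alpha$, $f$, and $u(\cdot,0)$. In case (ii), $\alpha = n+1$ with $f$ satisfying \eqref{Aleks f cdt1}--\eqref{Aleks f cdt2}, the barrier argument of Lemma \ref{s3 lem1b} uses the polar dual of Aleksandrov's generalised solution to give the same two-sided bound. In case (iii), $\alpha < n+1$ with origin-symmetric $\M_t$, Lemma \ref{s3 lem1c} yields the $L^\infty$ bound via the conserved quantity \eqref{sym Lq-r} and the Blaschke-Santal\'o-type argument; note that the origin-symmetry of $\M_t$ is preserved under \eqref{normalised flow} by the evenness of $f$ and the uniqueness of the flow. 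Combined with Lemma \ref{s3 lem2}, this gives a uniform $C^1$ bound on $u$ on $[0, T^*)$.

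With these in hand, Lemmas \ref{s3 lem4} and \ref{s3 lem5} (which apply for all $\alpha\in \R^1$ once the $L^\infty$-gradient bounds are known) yield uniform two-sided bounds on the principal curvatures, namely $C^{-1} \le \kappa_i(\cdot,t) \le C$. This implies that $b_{ij}$ has uniformly bounded and uniformly positive eigenvalues, so the uniform convexity of $\M_t$ is preserved and equation \eqref{normalised flow spt} is uniformly parabolic with $C^2$ bounds on $u$ independent of $t$. The Krylov-Safonov interior H\"older theory \cite{Kry87} then gives a uniform $C^{2,\beta}_{x,t}$ bound on $u$; once this is established, differentiating \eqref{normalised flow spt} and applying standard Schauder estimates for uniformly parabolic equations in a bootstrap yields uniform $C^{k,m}_{x,t}$ bounds $\|u\|_{C^{k,m}_{x,t}} \le C_{k,m}$ for all $k, m \ge 0$, with constants independent of $T^*$.

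Finally, the uniform $C^2$ bound together with preservation of uniform convexity allows us to extend the solution past any finite $T^*$ by applying the short-time existence result with $\M_{T^*}$ as a new initial condition. Hence $T^* = \infty$. Uniqueness of the smooth solution follows from the comparison principle (Lemma \ref{lem5.1}) applied to \eqref{normalised flow spt}. The main obstacle, conceptually, lies in having to split into three cases for the $L^\infty$ bound on $u$, since uniform lower and upper bounds are the essential nonlinear a priori estimate that drives everything else; once these are in place, the passage to all-time smooth existence is a routine combination of Krylov's theory and parabolic bootstrapping, as no new geometric idea is required at that stage.
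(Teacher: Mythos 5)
Your proposal is correct and follows essentially the same route as the paper: the case-dependent $L^\infty$ bounds (Lemmas \ref{s3 lem1}, \ref{s3 lem1b}, \ref{s3 lem1c}), the gradient bound (Lemma \ref{s3 lem2}), and the two-sided curvature bounds (Lemmas \ref{s3 lem4}, \ref{s3 lem5}) give uniform parabolicity, after which Krylov's theory and parabolic bootstrapping yield the $C^{k,m}$ estimates and long-time existence, with uniqueness from the comparison principle of Lemma \ref{lem5.1}. You merely make explicit the standard continuation argument (maximal interval $T^*$ and extension past it) that the paper leaves implicit.
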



\section{Proofs of Theorems \ref{thmA} - \ref{thmDa}}

In this section we prove the asymptotical convergence of solutions to the normalised flow  \eqref{normalised flow}.
First we prove Theorem \ref{thmA}.

\begin{proof}[Proof of Theorem \ref{thmA}] 
Case i): $\alpha >n+1$.

Let $u(\cdot,t)$ be the solution of \eqref{normalised flow spt}.
By our choice of $\phi_0$ in \eqref{phi_0}, we have
$$a:=\min_{\S^n}u(\cdot,0)\le 1\le \max_{\S^n}u(\cdot,0)=:b.$$
Let us introduce two time-dependent functions
\beqs
{\begin{split}
 u_1 &= [1-(1-a^{q})e^{qt}]^{1/q}, \\
 u_2 &= [1-(1-b^{q})e^{qt}]^{1/q},
\end{split}}
\eeqs
where $q = n+1-\alpha <0$.
It is easy to check that both $u_1$ and $u_2$  satisfy equation  \eqref{normalised flow spt}, 
and the spheres of radii $u_1$ and $u_2$ are solutions of \eqref{normalised flow}.
By the comparison principle, $u_1\le u \le u_2$. Hence
$$(b^{q}-1) e^{qt}\le u^{q}-1 \le (a^{q}-1) e^{qt}.$$
Thus $u$ converges to $1$ exponentially.

To obtain the exponential convergence of $u$ to $1$ in the $C^k$ norms,
we use the following interpolation inequality, see e.g. \cite{Ha82},
\beq\label{interpolation}
\int_{\S^n} |\nabla^k T|^2 \le C_{m,n} \Big(\int_{\S^n}|\nabla^m T|^2\Big)^{\frac{k}{m}}
\Big(\int_{\S^n}|T|^2\Big)^{1-\frac{k}{m}}
\eeq
where $T$ is any smooth tensor field on $\S^n$,
and $k,m$ are any integers such that $0\le k\le m$.
Applying this to $u-1$ and using the fact that all derivatives of $u$ are bounded independently of $t$,
we conclude
$$\int_{\S^n}|\nabla^k u|^2 \le C_{k,\gamma} e^{-\gamma t}$$
for any $\gamma\in(0,\tilde \gamma)$ and any positive integer $k$,
where $\tilde \gamma>0$ is a constant depending only on $q$.
By the Sobolev embedding theorem on $\S^n$, see \cite{Aub82}, we have
\beqs
\|u-1\|_{C^l(\S^n)} \le C_{k,l}\Big(\int_{\S^n} |\nabla^k u|^2+\int_{\S^n}|u-1|^2\Big)^\frac12
\eeqs
for any $k>l+n/2$.
It follows that $\|u(\cdot, t)-1\|_{C^l(S^n)}\to 0$
exponentially as $t\to\infty$ for all integers $l\ge 1$. 
Namely $u(\cdot,t)$ converges to $1$ in $C^\infty$ topology as $t\to \infty$.

Case ii): $\alpha=n+1$. 
We first prove the  following lemma.

\begin{lemma}\label{s4 lem1}
There exist positive constants $C$ and $\gamma$ such that
if $X(\cdot,t)$ is a solution to the normalised flow \eqref{normalised flow},
we have the estimate
 \beq\label{esti1}
\max_{\S^n} \frac{|\nabla r(\cdot,t)|}{r(\cdot,t)} \le C e^{-\gamma t}\ \ \forall \ t>0, 
\eeq
where $r(\cdot,t)$ is the radial function of $X(\cdot,t)$.
\end{lemma}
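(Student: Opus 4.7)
The plan is to control the scale-invariant quantity
$$W(x,t) := \frac{|\nabla u|^2}{u^2}(x,t) = v^2(x,t) - 1 = \frac{|\nabla r|^2}{r^2}(x,t),$$
where the three expressions coincide because $r^2 = u^2 + |\nabla u|^2$ together with \eqref{def v}. The target estimate is then equivalent to exponential decay of $W_{\max}(t) := \max_{\S^n} W(\cdot,t)$, which I would obtain by a parabolic maximum-principle argument applied to an evolution inequality for $W$. Since the lemma sits in Case (ii) of the proof of Theorem \ref{thmA}, the standing assumptions are $\alpha = n+1$ and $f\equiv 1$; in that setting Lemma \ref{s3 lem1} gives the uniform pinching $\min u_0 \le u(\cdot,t) \le \max u_0$, Lemma \ref{s3 lem3} gives matching bounds for $r$, and Lemma \ref{s3 lem5} provides uniform two-sided positive bounds on the principal radii $b_{ij} = u_{ij} + u\delta_{ij}$. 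Consequently \eqref{normalised flow spt} is uniformly parabolic with smooth coefficients on $\S^n \times [0,\infty)$, so all manipulations below make sense.

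The heart of the argument is the computation of $\partial_t W$ from $u_t = -r^{n+1} K + u$ using \eqref{s2 Gauss} and standard spherical identities. This should produce an equation of the schematic form
$$\partial_t W = \mathcal{L} W - 2\gamma(x,t)\, W + R,$$
where $\mathcal{L}$ is a uniformly elliptic second-order operator whose coefficient matrix is essentially $r^{n+1} K\, b^{ij}$ (with $b^{ij}$ the inverse of $b_{ij}$), $\gamma(x,t)$ is the zeroth-order coefficient arising from differentiating $r^{n+1} K$ in the $u$-variable at the critical exponent $\alpha = n+1$, and $R$ collects terms that depend only on $\nabla W$ and $\nabla^2 W$. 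Evaluating at a spatial maximum $(x_0,t_0)$ of $W$, the first-order condition $\nabla W(x_0,t_0) = 0$ gives the algebraic identity $\sum_i u_i u_{ij} = u^{-1}|\nabla u|^2 u_j$, which I would use to eliminate $R$, while $\nabla^2 W(x_0,t_0) \le 0$ makes $\mathcal{L}W$ contribute non-positively. Diagonalising $u_{ij}$ with $e_1 \parallel \nabla u$ reduces the surviving terms to an explicit function of the principal radii, which by Lemma \ref{s3 lem5} is bounded above by $-2\gamma_0 W(x_0,t_0)$ for some uniform $\gamma_0 > 0$. A Gronwall comparison then yields $W_{\max}(t) \le W_{\max}(0) e^{-2\gamma_0 t}$, which is the claim.

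The main obstacle is verifying the \emph{uniform} lower bound $\gamma(x,t) \ge \gamma_0 > 0$. This is where the criticality of $\alpha = n+1$ enters decisively: formally, the linearisation of $u \mapsto -r^{n+1} K + u$ at the unit sphere $u \equiv 1$ is $\psi \mapsto \Delta_{\S^n}\psi$, whose spectrum has positive gap of size $n$ away from its one-dimensional kernel of constants, and this kernel is precisely the dilation mode that has been annihilated by our choice \eqref{phi_0} of $\phi_0$. To promote this linear heuristic into a pointwise bound at a nonlinear maximum of $W$, one must combine the first-order identity coming from $\nabla W(x_0,t_0) = 0$ with the uniform $C^2$ bounds of Section 4 to ensure that the nonlinear counterpart of the spectral gap survives along the whole flow. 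A secondary point worth noting is that $W = v^2 - 1$ is scale-invariant, consistent with the dilation invariance of the stationary set; no pointwise decay of $u - 1$ itself is asserted here, only of the scale-invariant quantity $|\nabla r|/r$.
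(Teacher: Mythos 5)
You have chosen the right quantity ($W = |\nabla r|^2/r^2$, which is twice the paper's $Q=\tfrac12|\nabla \log r|^2$) and the right overall strategy (a parabolic maximum principle yielding $\partial_t W_{\max}\le -2\gamma_0 W_{\max}$). But the step you yourself flag as ``the main obstacle'' --- producing a \emph{uniform} negative zeroth-order coefficient at the spatial maximum --- is exactly the content of the lemma, and your proposed resolution does not work as stated. Appealing to the spectral gap of the linearisation of $u\mapsto -r^{n+1}K+u$ \emph{at the unit sphere} cannot give a pointwise damping coefficient at an arbitrary maximum point of $W$ along the flow: the hypersurfaces are not assumed close to the sphere (that closeness is precisely what the lemma is used to prove), and a spectral gap of a linearised operator is an $L^2$ statement about the operator at one particular solution, not a sign condition on the coefficient $\gamma(x,t)$ in your schematic equation at a general convex hypersurface. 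The actual source of negativity in the paper is different and entirely concrete: writing $w=\log r$, the critical exponent $\alpha=n+1$ makes the radial equation \eqref{s4 lem1 t1} depend only on $\nabla w$ and $\nabla^2 w$ (no zeroth-order dependence on $w$), and after commuting covariant derivatives via the Ricci identity on $\S^n$ the surviving term at the maximum of $Q$ is
\begin{equation*}
\varrho\sum_{i,j} S_n^{ij}\bigl(w_iw_j-\delta_{ij}|\nabla w|^2\bigr)\;\le\;\varrho\Bigl(\max_i S_n^{ii}-\sum_i S_n^{ii}\Bigr)|\nabla w|^2 ,
\end{equation*}
i.e.\ the damping comes from the curvature of $\S^n$, made quantitative by the two-sided principal curvature bounds of Lemma \ref{s3 lem5} (which give $\sum_i S_n^{ii}-\max_i S_n^{ii}\ge c>0$ when $n\ge2$). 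Your proposal never identifies this mechanism, so the Gronwall step has no justified input.

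A second, related gap: the mechanism above degenerates when $n=1$, since then $\max_i S_n^{ii}-\sum_i S_n^{ii}=0$ and the pointwise maximum principle gives only $\partial_t Q\le 0$. The paper handles $n=1$ by a genuinely different argument (the equation becomes the quasi-linear $w_t=w_{xx}/(1+w_x^2)$, and one shows exponential $L^2$-decay of $w-\bar w$ via the Poincar\'e inequality, then upgrades by interpolation). Your proposal, which treats all dimensions uniformly through the alleged spectral-gap bound, silently fails in this case. Finally, a technical caution: carrying out the computation for $W=|\nabla u|^2/u^2$ in the Gauss-map parametrisation \eqref{normalised flow spt} is considerably messier than in the radial gauge, because $r^{n+1}K$ there still depends on $u$ itself and not only on its derivatives; the scale invariance you correctly observe is only manifest after passing to $w=\log r$.
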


\begin{proof}
Denote $w = \log r$.  By \eqref{s2 t6} and \eqref{s2 t7},
we have, under a local orthonormal frame,
\beqs
{\begin{split}
g_{ij} &= e^{2w}(\delta_{ij} + w_iw_j),\\
h_{ij} &= e^w(1+|\nabla w|^2)^{-\frac12} (\delta_{ij} + w_iw_j -w_{ij}),
\end{split}}
\eeqs
and
\beq\label{s4 lem1 t0}
K = \frac{\det h_{ij}}{\det g_{ij}} = (1+|\nabla w|^2)^{-\frac{n+2}{2}} e^{-n w} \det a_{ij},
\eeq
where
$$a_{ij} = \delta_{ij} + w_i w_j -w_{ij}.$$
By \eqref{def v}, \eqref{normalised flow rad} and \eqref{s4 lem1 t0},
it is not hard to verify that $w$ satisfies the following PDE
\beq\label{s4 lem1 t1}
 w_t= - (1+|\nabla w|^2)^{-\frac{n+1}{2}} \det a_{ij} + 1.
\eeq

Consider the auxiliary function
$$Q = \frac12 |\nabla w|^2.$$
At the point where $Q$ attains its spatial maximum, we have
$$0=\nabla_k Q = \sum w_i w_{ik},$$
and $\nabla^2_{ij} Q$ is a non-positive matrix
$$0\ge \nabla^2_{ij} Q = \sum w_k w_{kij} + \sum w_{ik}w_{kj}.$$
Denote $\varrho = (1+|\nabla w|^2)^{-\frac{n+1}{2}}$.
By differentiating \eqref{s4 lem1 t1}, we obtain, at the point where $Q$ achieves its spatial maximum,
\beqs
\p_t Q &=& \sum w_k w_{kt} = -\det a_{ij} \sum w_k\varrho_k -\varrho \sum w_k \nabla_k \det a_{ij} \\
         &=& \varrho \sum S_n^{ij}  \nabla_k w_{ij} w_k.
\eeqs
By  the Ricci identity, we have
$$ \nabla_k w_{ij} = \nabla_j w_{ik} + \delta_{ik}w_j - \delta_{ij}w_k.$$
Hence
\beqs
\p_t Q &=& \varrho \sum S_n^{ij} \big( Q_{ij} - w_{ik}w_{kj} + w_iw_j -\delta_{ij} |\nabla w|^2\big) \\
           &\le & \varrho  \big( \max_i S^{ii}_n -\sum S^{ii}_n\big)  |\nabla w|^2.
\eeqs
If $n\ge2$, we get
\beq\label{s4 lem1 s2}
\p_t Q \le -\gamma Q,
\eeq
for some positive constant $\gamma$, where we have used the estimates $\varrho \ge C^{-1}$
and $C^{-1} \le \kappa(\cdot,t) \le C$, which are established in Section 3.
Estimate \eqref{esti1} follows from \eqref{s4 lem1 s2} immediately.

For $n=1$, the equation \eqref{s4 lem1 t1} becomes quasi-linear
\beq\label{s4 lem1 a1}
w_t = \frac{w_{xx}}{1+w^2_x} \ \ \ \text{on} \ \S^1\times[0,\infty).
\eeq 
Let $$\bar w: = \frac{1}{2\pi}\int_{\S^1} w(x,t) dx$$ be the average of $w$.
By the divergence theorem,
\beqs
\frac{d}{dt}\bar w = \frac{1}{2\pi} \int_{\S^1} (\arctan(w_x))_x dx =0.
\eeqs
Hence $\bar w$ is a constant. Then it is simple to calculate
\beqs
\frac{d}{dt} \Big(\frac12 \int_{\S^1} (w-\bar w)^2 \Big)
&=& \int_{\S^1}(w-\bar w)(\arctan w_x)_x dx \\
&=& - \int_{\S^1} w_x \arctan w_x dx.
\eeqs
Note that, $w_x \arctan w_x \ge \delta_0 w_x^2$ for some $\delta_0>0$
depending only on the upper bound of $|w_x|$.
We deduce that, by the Poincar\'e inequality,
\beqs
\frac{d}{dt} \Big(\frac12 \int_{\S^1} (w-\bar w)^2 \Big)
&\le& -\delta_0 \int_{\S^1} w_x^2 dx \le -C\int_{\S^1} (w-\bar w)^2.
\eeqs
This implies $w$ exponentially converges to a constant in $L^2$-norm at $t \to \infty$.
The exponential decay \eqref{esti1} follows
by applying the interpolation inequality \eqref{interpolation} to $w-\bar w$.
\end{proof}

Now we prove Case ii) of Theorem \ref{thmA}. 
Lemma \ref{s4 lem1} implies $|\nabla r(\cdot,t)| \to 0$ exponentially as $t\to \infty$.
 
As in Case i), we infer by the interpolation inequality and the a priori estimates in Section 3,
that $r$ exponentially converges to a constant in the $C^\infty$ topology as $t\to \infty$.
Let us show that the constant must be $1$.

By \eqref{normalised flow rad}, we get
\beqs
\frac{d}{dt}\Big(\int_{\S^n} \log r(\xi,t)d\xi\Big) = \int_{\S^n} \Big(-\frac{r^{n+1}K}{u} +1\Big) d\xi.
\eeqs
By \eqref{s2 t9},
$$\frac{d}{dt}\Big(\int_{\S^n} \log r(\xi,t)d\xi\Big) = 0.$$
Therefore by our choice of $\phi_0$ in \eqref{phi_0}
$$\int_{\S^n} \log r(\xi,t) d\xi = \int_{\S^n} \log r(\xi,0)d\xi = 0.$$
This implies $r(\cdot,t) \to 1$ as $t\to \infty$.
\end{proof}

Recall that the normalised flow \eqref{normalised flow}
is a gradient flow of the functional $\J_\alpha$ (see \eqref{functional} for the definition).
We next complete the proofs of Theorem \ref{thmB} -\ref{thmDa}.

\begin{proof}[Proof of Theorem \ref{thmB}]
By our a priori estimates Lemmas \ref{s3 lem1} and \ref{s3 lem3},
there is a constant $C>0$, independent of $t$, such that
\beq\label{s4 thm t1}
|\J_\alpha (X(\cdot,t))| \le C\;\;\forall \; t\in[0,\infty).
\eeq
By Lemma \ref{descending flow}, we obtain
\beqs
\J_\alpha(X(\cdot,T)) - \J_\alpha(X(\cdot,0)) &=& - \int_0^T\int_{\S^n} \frac{(fr^\alpha K-u)^2}{u r^\alpha K} dxdt\\
                                                                      &\le& -\delta_0 \int_0^T\int_{\S^n} (fr^\alpha K-u)^2 dx dt.
\eeqs
By \eqref{s4 thm t1}, the above inequality implies
there exists a subsequence of times $t_j \to \infty$
such that $\M_{t_j}$ converges to a limiting hypersurface which satisfies \eqref{soliton sol}.

To complete the proof of Theorem \ref{thmB},
it suffices to show that the solution of \eqref{soliton sol} is unique.
Using \eqref{s2 r} and \eqref{s2 Gauss}, the equation \eqref{soliton sol} can be written as
\beq\label{s4 thm t2}
\frac{u}{(u^2+|\nabla u|^2)^{\frac{\alpha}{2}}} \det(\nabla^2 u +u I) = f.
\eeq
Let $u_1$ and $u_2$ be two smooth solutions of \eqref{s4 thm t2}.
Suppose $G = u_1/u_2$ attains its maximum at $x_0\in \S^n$.
Then at $x_0$,
$$0=\nabla \log G = \frac{\nabla u_1}{u_1} - \frac{\nabla u_2}{u_2},$$
and $\nabla^2 \log G$ is a negative-semidefinite matrix at $x_0$
\beqs
0 &\ge& \nabla^2 \log G  \\
   &=& \frac{\nabla^2 u_1}{u_1} -\frac{\nabla u_1\otimes\nabla u_1}{u_1^2} 
                                  - \frac{\nabla^2 u_2}{u_2}+ \frac{\nabla u_2\otimes\nabla u_2}{u_2^2} \\
    &=& \frac{\nabla^2 u_1}{u_1} - \frac{\nabla^2 u_2}{u_2} .                         
\eeqs
By \eqref{s4 thm t2} we get at $x_0$,
\beqs
1 
   &=& \frac{u_2^{n+1-\alpha}}{u_1^{n+1-\alpha}}
           \frac{(1+|\nabla u_1|^2/u_1^2)^{\frac{\alpha}{2}}}{(1+|\nabla u_2|/u_2^2)^{\frac{\alpha}{2}}}
           \frac{\det(u_2^{-1}\nabla^2u_2 +  I)}{\det( u_1^{-1}\nabla^2 u_1 +I)}       \\
    &\ge& G^{\alpha-n-1} (x_0).       
\eeqs
Since $\alpha > n+1$, $G(x_0) =\max_{\S^n} G \le 1$.
Similarly one can show $\min_{\S^n} G \ge 1$.
Therefore $u_1 \equiv u_2$.
\end{proof}

\vskip15pt

\begin{proof}[Proof of Theorem \ref{thmC}]

The long time existence of the flow \eqref{normalised flow} follows from Theorem \ref{long time existence}.
As in the proof of Theorem \ref{thmB}, $\M_t$ converges by a subsequence to a homothetic limit.
To prove the convergence of $\M_t$ along $t\to \infty$,
it suffices to show the limiting hypersurface is unique.

First we claim that if $\M_1$ and $\M_2$ are two smooth solutions to \eqref{soliton sol} for $\alpha =n+1$,
then $\M_1$ and $\M_2$ differ only by a dilation. 
This is a well known result  \cite{Aleks42, Pog73}.
We sketch the proof  in \cite{Pog73}  for reader's convenience.
Assume not, then there is a $\lambda >0$ such that 
\begin{itemize}
\item [(i)] the set
$\omega:=\{\xi\in \S^n: r_{\lambda \M_2}(\xi) \ge r_{\M_1}(\xi)\}$ 
is a proper subset of $\S^n$ with positive measure.
\item [(ii)]  the set $\omega_1 := \A_{\M_1}(\omega)$ is contained in
$\omega_2 := \A_{\lambda \M_2} (\omega)$, and
$ |\omega_2| > |\omega_1|$.
\end{itemize}
But on the other hand, by \eqref{s2 t9}, we have
$${\begin{split}
 & \int_{\omega_1} f = |\A^*_{\M_1}(\omega_1)| =|\omega|, \\
 & \int_{\omega_2} f = |\A^*_{\M_2}(\omega_2)| =  |\A^*_{\lambda \M_2}(\omega_2)|= |\omega|.
\end{split}} $$
Hence $\int_{\omega_1} f = \int_{\omega_2} f $, which is in 
contradiction with (ii) above.

Next we show that 
\beq\label{const int}
\int_{\S^n} \log r (\xi,t)d\xi= \int_{\S^n} \log r (\xi,0)d\xi =\const. 
\eeq
This formula and the above claim imply that $\M_t$ converges to a unique limit.

To prove \eqref{const int}, dividing \eqref{normalised flow rad} by $r$ and 
integrating over $\S^n$,
we obtain, by \eqref{def v},
$$ \frac{d}{dt}\Big( \int_{\S^n} \log r(\xi,t)d\xi\Big)=   -\int_{\S^n} f(x) \frac{r^{n+1}K}{u} d\xi + o_n . $$
By the variable change \eqref{s2 t9} and using \eqref{Aleks f cdt1}, we have
\beqs
\frac{d}{dt}\Big( \int_{\S^n} \log r(\xi,t)d\xi\Big) = -\int_{\S^n} f(x) dx + o_n = 0.
\eeqs
Hence we obtain \eqref{const int}.
\end{proof}

\vskip15pt

\begin{proof}[Proof of Theorem \ref{thmDa}]

Since $f$ is even and $\M_0$ is origin-symmetric,
the solution remains  origin-symmetric for $t>0$.
The long time existence of the flow \eqref{normalised flow} now follows from Theorem \ref{long time existence}.
As in the proof of Theorem \ref{thmB}, $\M_t$ converges by a subsequence to a homothetic limit.
We conclude that $\M_t$ converges in $C^\infty$-topology
to a smooth solution of \eqref{soliton sol} as $t\to \infty$
by using the argument of \cite{And97,GuWg03}.
A tractable proof for this was presented in Section 4 of \cite{GuWg03}.

It remains to show, when $f\equiv1$ and $\alpha \ge 0$,
the only origin-symmetric solitons are spheres.
By \eqref{soliton sol}, a soliton to the flow \eqref{normalised flow spt} satisfies
\beq\label{rf1}
uS_n = (u^2+|\nabla u|^2)^{\frac{\alpha}{2}} \ge u^\alpha.
\eeq
While using \eqref{s3 e2}, the polar body of our soliton satisfies
\beq\label{rf2}
u^*S^*_n =\Big(\frac{((u^*)^2+|\nabla u^*|^2)^{\frac12}}{u^*}\Big)^{n+1} (u^*)^\alpha \ge (u^*)^\alpha.
\eeq
Let us denote by $\Omega$ and $\Omega^*$ the convex bodies whose support functions
are $u$ and $u^*$ respectively.
Integrating \eqref{rf1} and \eqref{rf2} over $\S^n$ and then multiplying yield
\beqn\label{rf3}
\text{Vol}(\Omega)\text{Vol}(\Omega^*)
&\ge& \frac{1}{(n+1)^2}\Big(\int_{\S^n} u^\alpha \Big)\Big(\int_{\S^n} (u^*)^\alpha\Big) \notag \\
&\ge&\frac{1}{(n+1)^2} \Big(\int_{\S^n} (uu^*)^{\frac{\alpha}{2}}\Big)^2.
\eeqn
Note that $uu^* = \frac{1}{rr^*}$, and by definition the polar dual,
$$0<r(\xi)r^*(\xi) = \big\langle r(\xi)\xi, r^*(\xi)\xi\big\rangle \le 1.$$
Hence $uu^* \ge 1$.
It then follows from \eqref{rf3} that
\beq\label{rf4}
\text{Vol}(\Omega)\text{Vol}(\Omega^*) \ge \text{Vol}^2(B_1),
\eeq
where $B_1$ denotes the unit ball in $\R^{n+1}$.
The Blaschke-Stanl\'o inequality tells us
$$\text{Vol}(\Omega)\text{Vol}(\Omega^*) \le \text{Vol}^2(B_1).$$
Therefore by the characterisation of equality cases, $\Omega$ must be an ellipsoid.
By \eqref{rf1} and \eqref{rf2},
we infer that $\Omega=B_1$, otherwise the inequality in \eqref{rf4} would become strict,
which is not possible.

\end{proof}

\section{Proof of Theorem \ref{thmD} }

In this section we show that if $\alpha < n+1$ the flow \eqref{flow} may have unbounded ratio of radii,
namely
\beq\label{5.1}
 \tR (X(\cdot,t)) = \frac{\max_{\S^n} r(\cdot,t)}{\min_{\S^n} r(\cdot,t)} \to \infty\;\;\text{as}\;\;t\to T
\eeq
for some $T>0$.
To prove \eqref{5.1}, we show that $\min_{\S^n} r(\cdot,t)\to 0$ in finite time while 
$\max_{\S^n} r(\cdot,t)$ remains positive.
In contrast, it  is worth mentioning that in \cite{Treib90}, 
the author obtained an a priori bound for the ratio $\max_{\S^n} r / \min_{\S^n} r$
if $r$ is the radial function of the solution to the Aleksandrov problem.

Let $X(\cdot, t)$ be a convex solution to \eqref{flow}.
Then its support function $u$ satisfies the equation
\beq\label{flow-u}
{\left\{
\begin{split}
 \frac{\p u}{\p t}(x,t) &= - f r^\alpha S_n^{-1}(\nabla_{ij}^2 u + u \delta_{ij})(x,t),  \\
 u(\cdot,0) &=u_0.
\end{split}\right.}
\eeq
Given a smooth, closed, uniformly convex hypersurface $\M_0$,
our a priori estimates in Section 3 imply the existence of a smooth,
closed, uniformly convex solution to the flow \eqref{flow} for small $t>0$.
The solution remains smooth until  either the solution shrinks to the origin, 
or \eqref{5.1} occurs at some time $T>0$.

\begin{definition} A time dependent family of convex hypersurfaces $Y(\cdot, t)$ is a {sub-solution} to \eqref{flow-u}
if its support function 
$w$ satisfies 
\beq\label{flow-w}
{\left\{
\begin{split}
 \frac{\p w}{\p t}(x,t) &\ge - f r^\alpha S_n^{-1}(\nabla_{ij}^2 w + w \delta_{ij})(x,t)  , \\
 w(\cdot,0) &\ge u_0,
\end{split}\right.}
\eeq
where $r$ is the radial function of the associated hypersurface. 
\end{definition}

By definition, the hypersurface $\M_0$ (independent of $t$), whose support function is $u_0$,
is a sub-solution to \eqref{flow-u}.
We will use the following comparison principle.

\begin{lemma}\label{lem5.1}
Let $X(\cdot, t)$ be a solution to \eqref{flow} and $Y(\cdot, t)$ a sub-solution.
Suppose $X(\cdot, 0)$ is contained in the interior $Y(\cdot, 0)$.
Then  $X(\cdot, t)$ is contained in the interior $Y(\cdot, t)$ for all $t>0$,  as long as the solutions exist.
\end{lemma}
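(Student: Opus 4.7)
My plan is to apply the parabolic maximum principle to the difference of support functions. Let $u$ denote the support function of $X(\cdot, t)$ and $w$ that of $Y(\cdot, t)$. Since interior containment $X(\cdot, t) \subset Y(\cdot, t)$ is equivalent to $v := w - u > 0$ on $\S^n$, and the hypothesis gives $v(\cdot, 0) > 0$, the goal is to propagate this strict positivity forward in time for as long as both flows exist.

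First I would subtract \eqref{flow-u} from \eqref{flow-w} and linearize the nonlinearity $G(u, \nabla u, \nabla^2 u) := -f(x)\, r^\alpha / \det(\nabla^2 u + u I)$ along the line segment $u_s = (1-s) u + s w$, $s \in [0,1]$. Writing $b_s = \nabla^2 u_s + u_s I$ and $r_s^2 = u_s^2 + |\nabla u_s|^2$ and integrating the chain rule in $s$,
\[
v_t \ge G(w, \nabla w, \nabla^2 w) - G(u, \nabla u, \nabla^2 u) = A^{ij}(x, t)\, v_{ij} + B^i(x, t)\, v_i + C(x, t)\, v,
\]
where, using $\partial b_{kl}/\partial u_{ij} = \delta_{ik}\delta_{jl}$ and $\partial(1/\det b)/\partial b_{ij} = -(b^{-1})^{ij}/\det b$, the leading coefficient is
\[
A^{ij}(x, t) = \int_0^1 \frac{f(x)\, r_s^\alpha}{\det b_s}\, (b_s^{-1})^{ij}\, ds,
\]
which is positive definite, while $B^i$ and $C$ are bounded. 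All of these bounds are uniform on $\S^n \times [0, T]$ because both $u$ and $w$ are smooth and uniformly convex on the common existence interval, so the interpolants $b_s$ have eigenvalues bounded away from $0$ and $\infty$. Hence $v$ satisfies the uniformly parabolic linear differential inequality $\mathcal{L} v := v_t - A^{ij} v_{ij} - B^i v_i - C v \ge 0$ on $\S^n \times (0, T)$.

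To finish I would invoke the parabolic strong maximum principle. After the standard substitution $\tilde v = e^{-Kt} v$ with $K$ chosen larger than $\sup|C|$, the zero-order coefficient in the operator for $\tilde v$ acquires the correct sign for the min principle. If $v$ failed to remain strictly positive, there would be a first time $t_0 > 0$ at which $\tilde v$ attains an interior minimum equal to $0$; the strong maximum principle would then force $\tilde v \equiv 0$ on $\S^n \times [0, t_0]$, contradicting $\tilde v(\cdot, 0) = v(\cdot, 0) > 0$. Therefore $v > 0$ on the whole common existence interval, which is exactly the conclusion of the lemma.

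The main technical point I anticipate is the verification of uniform parabolicity of $\mathcal{L}$, which reduces to a uniform two-sided bound on the eigenvalues of the interpolated matrices $b_s$. This is immediate once both $X(\cdot, t)$ and $Y(\cdot, t)$ are smooth and uniformly convex on the time interval in question, which is precisely the regime in which the lemma is applied in Section 6 (where $Y$ is typically taken to be a stationary, uniformly convex barrier). With this bookkeeping done, the remainder is classical linear parabolic theory.
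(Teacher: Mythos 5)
Your argument is correct for classical sub-solutions, but it takes a genuinely different route from the paper. You prove the comparison principle from scratch: writing $v=w-u$, linearising the operator along the convex interpolation $u_s=(1-s)u+sw$ (the key observation being that $\nabla^2u_s+u_sI$ is a convex combination of positive definite matrices, hence positive definite, so the coefficient $A^{ij}$ is elliptic and, on any compact time sub-interval where both hypersurfaces are smooth and uniformly convex, uniformly so), and then closing with the linear strong maximum principle after the standard $e^{-Kt}$ substitution. The paper instead treats the parabolic comparison principle as a black box and adds a scaling device: $u^\lambda(x,t)=\lambda u(x,\lambda^{\beta}t)$ with $\beta=\alpha-n-1$ is again an exact solution of \eqref{flow-u}, starts strictly below $w$ when $\lambda<1$, so $u^\lambda\le w$ by comparison, and letting $\lambda\to1$ gives $u\le w$. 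Your route makes the mechanism explicit and actually yields the strict inequality $w-u>0$ (the paper's limiting argument only produces $u\le w$), while the paper's route is shorter and defers the analytic content to a quoted comparison theorem.

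One caveat you should address. Your linearisation requires $w\in C^2$ with two-sided bounds on $\nabla^2w+wI$, and your closing remark that the lemma is applied to a ``stationary, uniformly convex barrier'' misreads the application: the sub-solution $\widehat\M_t$ of Lemma \ref{lem5.2} that drives Theorem \ref{thmD} is time-dependent, only $C^{1,1}$ (it is glued along $\rho=|t|^{\theta}$), and its principal curvatures degenerate as $t\nearrow0$; the paper explicitly notes after Lemma \ref{lem5.1} that the comparison must hold for $C^{1,1}$ (viscosity) sub-solutions. In that setting your coefficients $A^{ij},B^i,C$ are only measurable and the two-sided ellipticity bounds hold only on compact sub-intervals away from the degenerate endpoint, so the classical strong maximum principle must be replaced by the Aleksandrov--Bakelman--Pucci maximum principle for measurable coefficients (or by a mollification of $w$, or by a viscosity-solution touching argument), and the conclusion on the full interval obtained by exhausting it with compact sub-intervals. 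For the lemma as literally stated, with a classical sub-solution, your proof is complete.
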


\begin{proof}
Let $u(\cdot, t)$ and $w(\cdot, t)$ be the support functions of $X(\cdot,t)$ and $Y(\cdot,t)$.
Then $u$ and $w$ satisfy \eqref{flow-u} and \eqref{flow-w} respectively with
$u(x,0)\le w(x,0)$ for all $x\in\S^n$.
For $\lambda>0$, let us denote $u^\lambda(x,t) = \lambda u(x,\lambda^\beta t)$,
where $\beta = \alpha-n-1$.
It is easily seen that $u^\lambda$ solves \eqref{flow-u} with $u^\lambda(\cdot,0)=\lambda u_0$.
Let $\lambda <1$. Then $u^\lambda(\cdot,0)< w(\cdot,0)$.
By the comparison principle for parabolic equation, 
\beq\label{contain t1}
u^\lambda(x,t)  < w(x,t), \forall\ x\in\S^n \ \text{and} \ t>0,
\eeq
as long as the solutions exist.
Sending $\lambda \to 1$, we obtain $u(x,t) \le w(x,t)$.
\end{proof}

Note that in Lemma \ref{lem5.1}, we do not require that $Y(\cdot, t)$
is shrinking. Moreover, it suffices to assume that $Y(\cdot, t)$ is a sub-solution in the viscosity sense.
In particular Lemma \ref{lem5.1}  applies if $Y(\cdot, t)$ is $C^{1,1}$ smooth.

To prove Theorem \ref{thmD}, by the comparison principle (Lemma \ref{5.1}),  
it suffices to construct a sub-solution $Y(\cdot, t)$ 
such that $\min_{\S^n} w(\cdot, t)\to 0$  but $\max_{\S^n} w(\cdot,t)$ remains positive, as $t\to T$
 for some finite time $T>0$. By a translation of time, we show below that there is a sub-solution $Y(\cdot, t)$ 
 for $t\in (-1, 0)$ such that \eqref{5.1} holds as $t\nearrow 0$.

\begin{lemma}\label{lem5.2} For any given a positive function $f$, 
there is a  sub-solution $Y(\cdot, t)$, where $t\in (-1, 0)$, to 
\beq\label{flow-af}
{\left\{
\begin{split}
 \frac{\p u}{\p t}(x,t) &= - af r^\alpha S_n^{-1}(\nabla_{ij}^2 u + u \delta_{ij})(x,t),  \\
 u(\cdot,0) &=u_0.
\end{split}\right.}
\eeq
for a sufficiently large constant $a>0$,
such that $\min_{\S^n} w(\cdot, t)\to 0$  but $\max_{\S^n} w(\cdot,t)$ remains positive, as $t\nearrow 0$.
\end{lemma}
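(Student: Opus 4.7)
The plan is to construct an explicit $C^{1,1}$ family of convex bodies $Y(t)$, $t\in(-1,0)$, with a designated pinch direction $x^{*}\in\S^{n}$ such that $w_{Y}(x^{*},t)\to 0$ while $\max_{\S^{n}} w_{Y}(\cdot,t)$ stays bounded below as $t\nearrow 0$, and to verify the sub-solution inequality $\partial_{t} w_{Y}\geq -af(x) r_{Y}^{\alpha}K_{Y}$ pointwise on $\S^{n}\times(-1,0)$ for some sufficiently large $a$. My first attempt is the translating-ball family $Y(t)=B(p(t),\rho)$ with fixed radius $\rho>0$ and $p(t)=-(\rho-\eta(t))x^{*}$, so that $w_{Y}(x,t)=\rho-(\rho-\eta(t))\langle x,x^{*}\rangle$, giving $\min w_{Y}=\eta(t)$, $\max w_{Y}=2\rho-\eta(t)$, and $K_{Y}=1/\rho^{n}$. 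The inequality is automatic on the hemisphere $\langle x,x^{*}\rangle\leq 0$; on the opposite hemisphere the tightest case occurs at $x=x^{*}$, where $r_{Y}=\eta$, and the inequality collapses to the ODE-type bound $|\eta'(t)|\leq (af(x^{*})/\rho^{n})\eta(t)^{\alpha}$.

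This ODE admits a finite-time extinction solution precisely when $\alpha<1$, in which case one may take $\eta(t)=[(1-\alpha)(af(x^{*})/\rho^{n})(-t)]^{1/(1-\alpha)}$, choosing $a$ so that $\eta(-1)<\rho$ and the origin is in the interior of $Y(-1)$. For $1\leq\alpha<n+1$ the ball ODE lacks finite-time blow-down and the body must concentrate higher Gauss curvature near the pinch. My plan is then to take a Minkowski sum $Y(t)=w_{0}(t)B_{1}+\Omega$ where $\Omega$ is a fixed convex body whose support function $\phi$ vanishes to high order at $x^{*}$ (e.g.\ $\phi(x)=c_{1}(1-\langle x,x^{*}\rangle)^{p/2}$ with $p\in(2,4]$), arranging $\phi(x^{*})=\nabla\phi(x^{*})=0$ and $\nabla^{2}\phi(x^{*})=0$ so that the principal-radii matrix of $\Omega$ collapses at $x^{*}$. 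With $w_{Y}=w_{0}(t)+\phi(x)$, at $x^{*}$ the matrix $\nabla^{2}w_{Y}+w_{Y}I$ reduces to $w_{0}(t)I$, yielding $K_{Y}(x^{*},t)=w_{0}(t)^{-n}$ and $r_{Y}(x^{*},t)=w_{0}(t)$; the pinch-direction constraint $|w_{0}'(t)|\leq af(x^{*})w_{0}(t)^{\alpha-n}$ has critical exponent $\alpha-n<1$ equivalent to $\alpha<n+1$ and admits $w_{0}(t)=c_{0}(-t)^{\gamma}$ with $\gamma=\max(1,1/(n+1-\alpha))$.

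The main obstacle, and the step where I expect the bulk of the work, is checking the sub-solution inequality uniformly on $\S^{n}$ rather than only at $x^{*}$ and in the far field. Setting $\theta=\angle(x,x^{*})$, one identifies three crossover scales around the pinch: $\theta\sim w_{0}(t)^{1/p}$ (where $\phi$ overtakes $w_{0}$ in $w_{Y}$), $\theta\sim w_{0}(t)^{1/(p-1)}$ (where $|\nabla\phi|$ overtakes $w_{0}$ in $r_{Y}^{2}=w_{Y}^{2}+|\nabla w_{Y}|^{2}$), and $\theta\sim w_{0}(t)^{1/(p-2)}$ (where $\nabla^{2}\phi$ overtakes $w_{0}I$ in the curvature matrix). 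In each sub-regime one computes the leading asymptotics of $r_{Y}^{\alpha}K_{Y}$ and compares to $|w_{0}'(t)|=\gamma c_{0}^{1/\gamma}w_{0}(t)^{\alpha-n}$. The delicate regime is the intermediate annulus where $K_{Y}$ has grown to $\sim\theta^{-n(p-2)}$ but $r_{Y}$ is still $\sim w_{0}$; here one must tune $p$ (possibly as a function of $\alpha$ and $n$, or let $\phi$ acquire a mild time-dependence) so that the minimum over $\theta$ of $af(x)r_{Y}^{\alpha}K_{Y}$ still dominates $|w_{0}'(t)|$ uniformly. One also verifies the global positivity $\nabla^{2}\phi+\phi I\geq 0$ required for $\phi$ to be a support function, which for the model profile above is enforced by the bound $p\leq 4$ coming from the computation at the antipode $x=-x^{*}$.
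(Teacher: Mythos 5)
Your overall strategy is the same as the paper's (an explicit pinching barrier combined with the comparison principle, with the critical--exponent computation at the pinch encoding $\alpha<n+1$), but the proof is not complete: the step you yourself flag as ``the bulk of the work'' --- verifying $\partial_t w_Y\ge -af\,r^{\alpha}K$ on the intermediate annulus --- is left undone, and with the parameters you specify it actually fails. Write $\psi=\angle(x,x^{*})$. For $Y(t)=w_0(t)B_1+\Omega$ with $\phi\sim\psi^{p}$ near $x^{*}$, all principal radii of $\Omega$ are $\sim\psi^{p-2}$, so on the annulus $w_0^{1/(p-2)}\lesssim\psi\lesssim w_0^{1/(p-1)}$ one has $K_Y\sim\psi^{-n(p-2)}$ while still $r_Y\sim w_0$ (both $\phi$ and $|\nabla\phi|$ are $\lesssim w_0$ there). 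Hence
\begin{equation*}
\min_{\psi}\, r_Y^{\alpha}K_Y \;\sim\; w_0^{\,\alpha-n+\frac{n}{p-1}},
\end{equation*}
attained at $\psi\sim w_0^{1/(p-1)}$, which is strictly smaller as $t\nearrow0$ than the value $w_0^{\alpha-n}$ at $x^{*}$. With $\gamma=\max(1,1/q)$, $q=n+1-\alpha$, one has $|w_0'|\sim w_0^{1-1/\gamma}$, and $w_0^{\,\alpha-n+n/(p-1)}=o\big(|w_0'|\big)$ whenever $q<1+\frac{n}{p-1}$; so for all $\alpha$ in a neighbourhood of $n+1$ no fixed $a$ makes $af\,r^{\alpha}K\ge|w_0'|$ on that annulus for $t$ near $0$. (And $\alpha$ near $n+1$ is the essential case: a sub-solution for $\alpha$ only yields one for \emph{smaller} $\alpha'$.) Repairing this requires $\gamma>1/q$ strictly and $p-1\ge n/(q-1/\gamma)$, which forces $p$ arbitrarily large as $q\to0$, destroys your global convexity bound $p\le4$, and obliges you to glue $\phi$ to a convex profile away from $x^{*}$ --- none of which is carried out.

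The paper's barrier avoids the problem by not capping the curvature at $w_0^{-n}$. It first reduces to small $q$, then takes $\widehat{\M}_t$ near the origin to be the Euclidean graph of $\phi(\rho,t)=-|t|^{\theta}-c(t)+\frac{2}{1+\sigma}\rho^{1+\sigma}$ for $|t|^{\theta}\le\rho\le1$, capped by a paraboloid on $\rho<|t|^{\theta}$ to make it $C^{1,1}$, with $\sigma=\frac{q\theta-1}{n\theta}\in(0,1)$ and $\theta>1/q$. The outer profile depends on $t$ only through a vertical translation, so the normal speed is $\le\theta|t|^{\theta-1}$ everywhere, while $K\sim\rho^{n(\sigma-1)}$ blows up as $\rho\to0$; the choice of $\sigma$ makes $r^{\alpha}K\ge C\rho^{1-1/\theta}\ge C|t|^{\theta-1}$ hold uniformly on the whole annulus, which is exactly the estimate your intermediate regime lacks. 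Your translating-ball observation for $\alpha<1$ is fine but, as you note, does not reach the relevant range of $\alpha$.
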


\begin{proof}
The sub-solution we constructed is a family of closed convex hypersurfaces $\widehat \M_t :=Y(\S^n, t)$.
First note that it suffices to prove Lemma \ref{lem5.2} for $q=n+1-\alpha>0$ is small. 
Indeed, if $Y(\S^n, t)$ is a sub-solution to \eqref{flow-af} for some $\alpha$, it is also a sub-solution 
to \eqref{flow-af} for $\alpha'<\alpha$, provided we replace $a$ by 
$a\sup\{|p|^{\alpha-\alpha'};\ \ p\in \widehat \M_t, t\in (-1, 0)\}$.

Near the origin, let $\widehat \M_t$ be the graph of a function on $\R^n$,
$\phi (\rho, t)$  ($\rho=|x|$), given by
\beq\label{sub-solu}
\phi(\rho, t)= \left\{ {  \begin{split}
  & - |t|^{\theta} + |t|^{-\theta+\sigma\theta} \rho^2,\ \ \ \text{if}\ \rho < |t|^{\theta}, \\
  & - |t|^{\theta} - \frac{1-\sigma}{1+\sigma} |t|^{\theta(1+\sigma)} 
        +\frac{2}{1+\sigma}  \rho^{1+\sigma},  \ \  \text{if}\  |t|^{\theta} \le \rho \le 1,
  \end{split}}\right.\eeq
where 
$\sigma=\frac{q\theta-1}{n\theta}$ and $\theta>\frac 1q$ is a constant.  
It is easy to verify that $\phi$ is strictly convex, and $\phi\in C^{1,1}(B_1(0))$. 

By direct computation, we have, 
\begin{itemize}
\item [(i)] if $0\le \rho\le |t|^\theta$, then
\beq
{  \begin{split}
 &r^\alpha K  \ge |t|^{\alpha\theta} |t|^{n\theta(\sigma-1)}=|t|^{\theta-1},\\
  & |\frac{\p}{\p t} Y(p, t) | \le \theta |t|^{\theta-1}.
\end{split}} \eeq
where $p=(x, \phi(|x|, t))$ is a point on the graph of $\phi$ and 
$K$ is the Gauss curvature of the graph of $\phi$ at $p$. 
\item [(ii)] if $|t|^\theta\le \rho\le 1$, then
\beq
{  \begin{split}
 & r^\alpha K  \ge \rho^\alpha K \ge C\rho^\alpha \rho^{(\sigma-1)n} 
   \ge C \rho^{1-\frac 1\theta}\ge C |t|^{\theta-1},\\
  & |\frac{\p}{\p t} Y(p, t) | \le \theta |t|^{\theta-1}.
\end{split}} \eeq
\end{itemize}
Hence the graph of $\phi(\cdot, t)$ is a sub-solution to \eqref{flow-af}, 
provided $a$ is sufficiently large.

Next we extend the graph of $\phi$ to a closed convex hypersurface $\widehat \M_t$, 
such that it is $C^{1,1}$ smooth, uniformly convex, rotationally symmetric, and depends smoothly on $t$.
Moreover we may assume that the ball $B_1(z)$ is contained in the interior of $\widehat \M_t$, 
for all $t\in (-1, 0)$, where $z=(0, \cdots, 0, 10)$ is a point on the $x_{n+1}$-axis. 
Then $\widehat \M_t$ is a sub-solution to \eqref{flow-af},  for sufficiently large $a$.
\end{proof}

We are in position to prove Theorem \ref{thmD}. 
For a given $\tau\in (-1, 0)$, 
let $\M_0$ be a smooth, closed, uniformly convex hypersurface
inside $\widehat \M_{\tau}$ and enclosing the ball $B_1(z)$.
Let $\M_t$ be the solution to the flow \eqref{flow-af} with  initial data $\M_0$.
By Lemma \ref{5.1}, $\M_t$ touches the origin at $t=t_0$,
for some $t_0\in (\tau, 0)$. 
We choose $\tau$ very close to 0,
so that $t_0$ is sufficiently small.

On the other hand, let $\tilde X(\cdot, t)$ be the solution to 
\beq\label{s5 t1}
\frac{\p X}{\p t} = -   \beta \tilde f \tilde r^\alpha K \nu,
\eeq
with initial condition $\tilde X(\cdot, \tau)=\p B_1(z)$,
where 
$\beta=2^\alpha\sup\{ |p|^\alpha:\ p\in \M_t, \tau <t<t_0\}$, 
 $\tilde f =a  \max_{\S^n} f$, and
$\tilde r = |X-z|$ is the distance from $z$ to $X$. 
We can choose $\tau$ close enough to $0$ that the ball $B_{1/2}(z)$ is contained in the interior of
$\tilde X(\cdot, t)$ for  all $t\in (\tau, t_0)$. 
Since $\M_t$ is a sub-solution to \eqref{s5 t1}, 
by the comparison principle,
we see that  the ball $B_{1/2}(z)$ is contained in the interior of
$ \M_t $ for  all $t\in (\tau, t_0)$. Hence as $t\nearrow t_0$, 
we have $\min r(\cdot, t)\to 0$ and $\max r(\cdot, t)>|z|=10$. Hence \eqref{5.1} is proved for $\M_t$.
 
We have proved Theorem \ref{thmD} when $f$ is replaced by $af$, for large constant $a>0$.
Making the rescaling $\widetilde \M_t = a^{-1/q} \M_t$,
one easily verifies that $\widetilde \M_t$ solves the flow \eqref{flow} for the function $f$.
Theorem \ref{thmD} is proved.

\vskip10pt

Finally we point out that if $f$ does not satisfy \eqref{Aleks f cdt2}, 
then \eqref{unbounded ratio} holds for $\alpha = n+1$.
Indeed, assume to the contrary that the ratio $ \frac{\max_{\S^n} r(\cdot,t)}{\min_{\S^n} r(\cdot,t)}$
is uniformly bounded, then by \eqref{const int}, the radial function
$r(\cdot, t)$ is uniformly bounded from both above and below.
Hence by the a priori estimates (Lemmas \ref{s3 lem4} and \ref{s3 lem5}), 
the flow converges smoothly to a limit which solves \eqref{soliton sol}.
It means the Aleksandrov problem has a smooth solution without  condition \eqref{Aleks f cdt2}.
But this is impossible as \eqref{Aleks f cdt2} is necessary for the solvability of the Aleksandrov problem.


\end{document}